\sloppy\pagestyle{plain}
\newtheorem{theorem}[equation]{Theorem}
\newtheorem{proposition}[equation]{Proposition}
\newtheorem{lemma}[equation]{Lemma}
\newtheorem{corollary}[equation]{Corollary}
\newtheorem{conjecture}[equation]{Conjecture}
\newtheorem{question}[equation]{Question}
\theoremstyle{definition}
\newtheorem{example}[equation]{Example}
\theoremstyle{remark}
\newtheorem{remark}[equation]{Remark}
\makeatletter\@addtoreset{equation}{section} \makeatother
\def\wt {\mathrm{wt}}
\def\P {\mathbb{P}}
\def\Q {\mathbb{Q}}
\author{Ivan Cheltsov, Victor Przyjalkowski, Constantin Shramov}
\title{Which quartic double solids are rational?}
\address{\emph{Ivan Cheltsov}
\newline
\textnormal{School of Mathematics, The University of Edinburgh,  Edinburgh EH9 3JZ, UK.}
\newline
\textnormal{National Research University Higher School of Economics, Laboratory of Algebraic Geometry, 6 Usacheva str., Moscow, 119048, Russia.}
\newline
\textnormal{\texttt{I.Cheltsov@ed.ac.uk}}}
\address{\emph{Victor Przyjalkowski}
\newline
\textnormal{Steklov Institute of Mathematics, 8 Gubkina street, Moscow 119991, Russia.}
\newline
\textnormal{National Research University Higher School of Economics, Russian Federation,
Laboratory of Mirror Symmetry, NRU HSE, 6 Usacheva str., Moscow, Russia, 119048
}
\newline
\textnormal{\texttt{victorprz@mi.ras.ru, victorprz@gmail.com}}}
\address{\emph{Constantin Shramov}
\newline
\textnormal{Steklov Institute of Mathematics, 8 Gubkina street, Moscow 119991, Russia.}
\newline
\textnormal{National Research University Higher School of Economics, Laboratory of Algebraic Geometry, 6 Usacheva str., Moscow, 119048, Russia.}
\newline
\textnormal{\texttt{costya.shramov@gmail.com}}}
\begin{document}

\begin{abstract}
We study the rationality problem for nodal quartic double solids.
In particular, we prove that nodal quartic double solids with at most six singular points are irrational,
and nodal quartic double solids with at least eleven singular points are rational.
\end{abstract}

\sloppy

\maketitle

\section{Introduction}
\label{section:into}

In this paper, we study double covers of $\mathbb{P}^3$ branched over nodal quartic surfaces.
These Fano threefolds are known as \emph{quartic double solids}.
It is well-known that smooth threefolds of this type are irrational.
This was proved by Tihomirov (see~\mbox{\cite[Theorem~5]{Tihomirov3}})
and Voisin (see \cite[Corollary~4.7(b)]{Voisin88}).
The same result was proved by Beauville
in~\mbox{\cite[Exemple~4.10.4]{Beauville}}
for the case of quartic double solids with one ordinary double singular point (node),
by Debarre in \cite{Debarre}  for the case of up to four nodes and also for five nodes subject to generality conditions,
and by Varley in \cite[Theorem~2]{Varley} for double covers of $\mathbb{P}^3$ branched over special quartic surfaces with six nodes
(so-called Weddle quartic surfaces).
All these results were proved using the theory of intermediate Jacobians introduced by Clemens and Griffiths in \cite{ClemensGriffiths}.
In \cite[\S8~and~\S9]{Clemens},
Clemens studied intermediate Jacobians of
resolutions of singularities for nodal quartic double solids
with at most six nodes in general position.

Another approach to irrationality of  nodal quartic double solids was introduced by Artin and Mumford in \cite{ArtinMumford}.
They constructed an example of a quartic double solid with ten nodes whose resolution of singularities
has non-trivial torsion in the third integral cohomology group, and thus the solid is not stably rational.
Recently, Voisin used this example together
with her new approach via Chow groups to prove the following
result.

\begin{theorem}[{\cite[Theorem~0.1]{Voisin}}]
\label{theorem:Voisin}
For any integer $k=0,\ldots,7$, a very general nodal quartic double
solid with $k$ nodes is not stably rational.
\end{theorem}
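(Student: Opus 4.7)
The plan is to invoke Voisin's specialization principle for stable rationality via the decomposition of the diagonal. The principle states that, given a flat proper family $\pi\colon\mathcal{X}\to B$ over a smooth curve whose general fiber is smooth and whose special fiber $X_{0}$ admits a resolution $\sigma\colon\widetilde{X}_{0}\to X_{0}$ satisfying (i) $\sigma$ is universally $\mathrm{CH}_{0}$-trivial (over every field extension, every fiber of $\sigma$ has trivial Chow group of zero-cycles), and (ii) $\widetilde{X}_{0}$ itself does not admit an integral decomposition of the diagonal, then the very general fiber of $\pi$ is not stably rational. I would apply this principle with $X_{0}$ equal to the Artin--Mumford ten-nodal quartic double solid from \cite{ArtinMumford}.

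For each fixed $k$ with $0\le k\le 7$, I would construct a one-parameter family $\mathcal{X}\to B$ whose general fiber is a nodal quartic double solid with exactly $k$ nodes and whose special fiber is an Artin--Mumford type quartic double solid with ten nodes. Concretely, one chooses a pencil of quartic surfaces in $\mathbb{P}^{3}$ such that the general member has $k$ prescribed ordinary double points and a special member is an Artin--Mumford quartic, and takes the associated double covers. After resolving the total space in a way compatible with small resolutions of the nodes on each fiber, one obtains a family to which the specialization principle applies.

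The verification of hypotheses (i) and (ii) splits as follows. A small resolution $\widetilde{X}_{0}\to X_{0}$ of the ten nodes is universally $\mathrm{CH}_{0}$-trivial, since each fiber is either a point or a $\mathbb{P}^{1}$; this gives (i). Artin and Mumford showed that $H^{3}(\widetilde{X}_{0},\mathbb{Z})$ has non-trivial $2$-torsion, equivalently that the Brauer group $\mathrm{Br}(\widetilde{X}_{0})$ is non-trivial, and such torsion is a stable birational invariant that obstructs an integral decomposition of the diagonal; this yields (ii). Combining with the specialization principle gives the theorem.

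The main obstacle is geometric: one must verify that for every $k\le 7$ the locus of $k$-nodal quartic double solids has an Artin--Mumford type solid in its closure inside the parameter space of quartic surfaces, and that the degeneration can be arranged so that the total space of $\mathcal{X}\to B$ meets the hypotheses of the specialization theorem after a suitable simultaneous resolution that restricts to a small resolution on the special fiber and is compatible with the $k$-nodal structure of the general fiber. A secondary delicate point is ensuring that a projective small resolution of the Artin--Mumford quartic double solid actually exists as a scheme-theoretic morphism; this relies on the fact that the ten nodes lie on an even set of curves providing a globally defined small resolution, a feature built into the Artin--Mumford construction.
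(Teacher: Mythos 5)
This statement is not proved in the paper at all: it is quoted verbatim from Voisin's article (\cite[Theorem~0.1]{Voisin}) and used as a black box, so there is no internal proof to compare against. What you have written is, in outline, a faithful reconstruction of Voisin's own argument: the specialization principle for the integral decomposition of the diagonal via a universally $\mathrm{CH}_0$-trivial resolution of the special fiber, applied to a degeneration of $k$-nodal quartic double solids to the Artin--Mumford ten-nodal example, with the $2$-torsion in $H^3$ (equivalently the Brauer group) of the resolution obstructing the decomposition. Those ingredients are all correct. One small simplification you could make: there is no need to worry about the existence of a projective small resolution of the Artin--Mumford solid, since the ordinary blow-up of the ten nodes is already universally $\mathrm{CH}_0$-trivial (each exceptional fiber is a point or a smooth quadric surface over $\mathbb{C}$, hence universally $\mathrm{CH}_0$-trivial), and this is what Voisin uses.

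The genuine gap is the step you yourself flag as ``the main obstacle'' and then do not carry out: the verification that for each $k\leqslant 7$ the family of $k$-nodal quartic double solids actually degenerates to an Artin--Mumford solid, together with a simultaneous resolution of the family to which the specialization theorem applies. This is not a routine check --- it is precisely where the restriction $k\leqslant 7$ enters, and your proposal gives no indication of why $7$ is the cutoff. The point is that one must smooth $10-k$ of the ten nodes of the Artin--Mumford quartic while preserving the other $k$, and the deformation-theoretic independence needed for this (the relevant nodes imposing independent conditions) fails if too many nodes are retained; since the ten Artin--Mumford nodes do not impose independent conditions on quadrics (which is exactly why the resolution carries torsion), the count has to be done carefully. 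A second, related point left implicit is that for $k\geqslant 1$ the very general fiber of your family is itself singular, so the specialization theorem in the form you state it (smooth general fiber) does not apply directly; one must either resolve the family simultaneously along the $k$ persisting nodes or interpret ``not stably rational'' for the nodal solid through a universally $\mathrm{CH}_0$-trivial resolution. Both issues are resolved in Voisin's paper, but as written your argument is an outline of her proof with its hardest step left open, not a complete proof.
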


In spite of its strength, Theorem~\ref{theorem:Voisin} is not easy to
apply to particular varieties.
This is due to non-explicit generality condition involved,
which is a common feature of many results based on degeneration method
and universal triviality of Chow groups, see e.g. the survey~\cite{Pirutka} and references therein.
In any case, it is natural to ask what one can say about birational geometry of an arbitrary
variety in our family, not just a general one.
The main goal of this paper is to get rid of the generality condition
(at the cost of weakening the assertion and allowing fewer singular
points on a quartic double solid).
We use some explicit birational constructions for conic bundles
together with a standard approach based on intermediate Jacobian theory
to prove the following result.

\begin{theorem}
\label{theorem:CPS}
A nodal quartic double solid with at most six nodes is irrational.
\end{theorem}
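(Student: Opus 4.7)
The overall strategy is the Clemens--Griffiths intermediate Jacobian method: if $X$ is rational, any smooth projective resolution $\widetilde X$ of $X$ has $J^{2}(\widetilde X)$ isomorphic, as a principally polarized abelian variety, to a product of Jacobians of smooth curves. My plan is to derive a contradiction by realizing $J^{2}(\widetilde X)$ as a Prym variety of an admissible double cover of a plane curve and verifying that this Prym is not a product of Jacobians.

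First I would set up a conic bundle model. Pick a singular point $p$ of $X$ mapping to a node $\bar{p}$ of the branch quartic $B \subset \mathbb{P}^{3}$, and let $\sigma\colon Y \to \mathbb{P}^{3}$ be the blowup at $\bar{p}$, so that $Y$ is a $\mathbb{P}^{1}$-bundle over $\mathbb{P}^{2}$. The strict transform $\widetilde B$ is smooth along the exceptional divisor and has class divisible by $2$ in $\mathrm{Pic}(Y)$, so the double cover $\widetilde X \to Y$ branched along $\widetilde B$ is a partial resolution of $X$, and the composition $\widetilde X \to \mathbb{P}^{2}$ is a standard conic bundle. The other nodes of $X$ either get resolved by the conic bundle (appearing as extra reducible fibers) or are resolved by small/projective modifications that can be tracked explicitly.

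Second, by Beauville's theorem, $J^{2}(\widetilde X) \cong \mathrm{Prym}(\widetilde C / C)$, where $C \subset \mathbb{P}^{2}$ is the discriminant of the conic bundle and $\widetilde C \to C$ is the associated connected double cover. In the generic cases for $n \leq 6$ nodes the curve $C$ is a plane sextic (possibly with mild nodes), and the resulting Prym has already been shown to be non-Jacobian by Clemens, Debarre, and Varley. I would then invoke the Beauville--Shokurov classification of Pryms that are Jacobians (or products thereof): for an \'etale connected double cover $\widetilde C \to C$, such a coincidence forces $C$ into a short list of types (hyperelliptic, trigonal with special Maroni invariant, bielliptic, or of small genus), and one checks, configuration by configuration, that our $(\widetilde C, C)$ avoids this list.

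The main obstacle---where the paper's contribution beyond Clemens, Debarre, and Varley lives---is handling the \emph{non-generic} node configurations, where the defect can be positive, a small projective resolution of $X$ may fail to exist, and the discriminant $C$ may degenerate, become reducible, or acquire extra singularities. Here I would use elementary birational moves: reprojecting from a different node of $B$, passing to the linear system of quadrics through all the nodes, or contracting a surface of lines on $B$ to exchange $X$ with another nodal Fano threefold carrying a better conic bundle. When such a move produces a $(\widetilde C, C)$ that lies in the exceptional Prym-is-Jacobian list, one traces the construction back and shows that $B$ must contain a plane or a quadric passing through many of the nodes, contradicting the hypothesis that $X$ is a nodal quartic double solid with at most six isolated singularities. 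Exhausting the finite case list this way completes the proof.
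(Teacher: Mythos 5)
Your overall strategy coincides with the paper's: project from a node to get a conic bundle over $\mathbb{P}^2$, identify the intermediate Jacobian with a Prym via Beauville, and rule out the Prym being a sum of Jacobians via the Shokurov classification. However, there are genuine gaps at the two places where the real work happens.

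First, your claim that the double cover $\widetilde X\to Y$ composed with $Y\to\mathbb{P}^2$ is already a \emph{standard} conic bundle is false as soon as $X$ has more than one node: the total space still contains the preimages of the remaining nodes of $X$, and the discriminant sextic $C$ acquires cusps and tacnodes (not just nodes) at the images of lines through two or three singular points of the branch quartic (Proposition~\ref{proposition:C6}). A standard conic bundle has smooth total space and nodal discriminant, so one cannot stay over $\mathbb{P}^2$; the base must be blown up at every singular point of $C$ lying under a singular fiber. The paper spends all of Section~\ref{section:standard} carrying this out explicitly (flops, pagodas, Francia antiflips over nodes, cusps and tacnodes of $C$), and the key quantitative output --- that the new base $U$ is obtained from $\mathbb{P}^2$ by exactly $|\mathrm{Sing}(S)|-1$ blow ups, so that $U$ is a weak del Pezzo surface of degree $d=10-|\mathrm{Sing}(S)|\geqslant 4$ with $\Delta\sim-2K_U$ --- is precisely what your sketch does not supply and what makes the endgame work. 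This step also uses $\mathbb{Q}$-factoriality of $X$, which forces you to treat separately the non-$\mathbb{Q}$-factorial six-nodal case of Example~\ref{example:cubic-threefold} (there $X$ is birational to a smooth cubic threefold and irrationality comes from Clemens--Griffiths directly); your proposal does not address this case.

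Second, your plan for the non-generic configurations --- reprojecting from another node, or showing the branch quartic contains a plane or quadric --- is not how the exceptional Prym cases are excluded, and it is not clear it could be made to work. The paper's mechanism is different and uniform: since $d\geqslant 4$, the anticanonical map of $U$ contracts the $(-2)$-curves and embeds the image of $\Delta$ canonically as an intersection of quadrics in $\mathbb{P}^d$; the $(-2)$-curve components of $\Delta$ are absorbed using Shokurov's gluing (Remark~\ref{remark:Shokurov}) without changing the Prym, and condition $(\mathrm{S})$ for the resulting curve is verified by a Hodge-index computation (Lemma~\ref{lemma:weak-Del-Pezzo-easy-3}). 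Then Shokurov's theorem applies because an intersection of quadrics is neither trigonal, quasitrigonal, nor a plane quintic (note the correct exceptional list is hyperelliptic/trigonal/quasitrigonal/plane quintic, not the bielliptic--Maroni list you cite). Without this, ``checking configuration by configuration'' is not a proof: the degenerate discriminants are exactly where the classical results of Clemens, Debarre and Varley stop, and you have not exhibited the argument that excludes them.
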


Recall that a nodal quartic surface in $\mathbb{P}^3$ can have at most $16$ nodes,
so that this is also the maximal number of nodes on a quartic double solid.
Moreover, Prokhorov proved that every nodal quartic double solid with $15$ or $16$ nodes is rational
(see \cite[Theorem~8.1]{Prokhorov} and \cite[Theorem~7.1]{Prokhorov}, respectively).
We use his approach to study quartic double solids with many nodes.
In particular, we prove the following result.

\begin{theorem}
\label{theorem:CPS-11}
A nodal quartic double solid with at least eleven nodes is rational.
\end{theorem}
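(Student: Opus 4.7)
\textbf{Proof plan for Theorem~\ref{theorem:CPS-11}.} Let $\pi\colon X \to \mathbb{P}^3$ be the double cover branched along a nodal quartic surface $S \subset \mathbb{P}^3$ with $n \geq 11$ nodes. Adapting Prokhorov's approach for the cases of $15$ and $16$ nodes, the plan is to exhibit $X$ as birational to a rational variety by constructing a conic bundle structure on a small resolution of $X$ and showing that, with at least $11$ nodes, this structure becomes rational.

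Pick a node $p \in X$ lying over a node $p' \in S$, and compose $\pi$ with the linear projection $\phi\colon \mathbb{P}^3 \dashrightarrow \mathbb{P}^2$ from $p'$ to obtain a rational map $\psi = \phi \circ \pi \colon X \dashrightarrow \mathbb{P}^2$. A general fiber of $\psi$ is the preimage under $\pi$ of a line $L$ through $p'$; since $p'$ is a node, $L$ meets $S$ at $p'$ with multiplicity $2$ and at two residual smooth points, so after a local change of coordinates the fiber becomes a smooth conic, and on a small resolution $\widetilde{X} \to X$ the map $\widetilde\psi\colon \widetilde{X} \to \mathbb{P}^2$ is a standard conic bundle. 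I would then compute the discriminant curve $\Delta \subset \mathbb{P}^2$ parametrizing singular fibers, and verify that each of the remaining $n - 1 \geq 10$ nodes of $S$ forces $\Delta$ either to lose a component or to drop in degree, so that for $n \geq 11$ either $\Delta$ acquires a controllable reducible structure or the conic bundle degenerates to a $\mathbb{P}^1$-bundle.

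Rationality of the resulting conic bundle should then follow from classical criteria (in particular those developed by Iskovskikh and Shokurov for conic bundles over rational surfaces), or from the explicit exhibition of a rational section; the many nodes of $X$ lying over nodes of $S$ other than $p'$ provide a rich supply of candidate sections via their strict transforms under the small resolution. The conclusion would be that $X$ is birational to $\mathbb{P}^2 \times \mathbb{P}^1$, hence rational. The principal obstacle is the uniform verification that the conic bundle is rational across \emph{all} configurations of at least $11$ nodes: some configurations (many nodes on a single plane, on a common quadric, or positioned so that projection from the chosen $p'$ does not maximally degenerate $\Delta$) will probably require projecting from a different node, from a line joining two nodes, or constructing an alternative rational structure directly. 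The boundary case of exactly $11$ nodes in ``generic'' position should be the most delicate step and will likely drive the numerical threshold in the statement.
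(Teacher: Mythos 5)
Your proposal takes a genuinely different route from the paper, and as it stands it has a gap that I do not think can be repaired along the lines you sketch. The paper's proof is much shorter and does not use the conic bundle structure at all for the rationality direction. The key observation is that $h^0(\mathcal{O}_{\mathbb{P}^3}(2))=10$, so eleven or more nodes can never impose independent linear conditions on quadrics; by Clemens' criterion (Theorem~\ref{theorem:Clemens}) the double solid $X$ is therefore not $\mathbb{Q}$-factorial. One then proves separately (Theorem~\ref{theorem:non-factorial}) that \emph{every} non-$\mathbb{Q}$-factorial nodal quartic double solid is rational unless it has exactly six nodes: take a $\mathbb{Q}$-factorialization $X'\to X$, contract a $K$-negative extremal ray, and check case by case that a conic bundle or del Pezzo fibration output gives rationality via \eqref{equation:anticanclass}, while a birational output realizes $X$ as in Example~\ref{example:cubic-threefold} with $Y$ a cubic threefold, which is singular (hence rational) as soon as $X$ has more than six nodes. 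The threshold $11$ in the statement is exactly $h^0(\mathcal{O}_{\mathbb{P}^3}(2))+1$; it has nothing to do with how the nodes are configured, which is precisely the uniformity problem your plan leaves open.

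Concretely, the step of your argument that fails is the claim that each additional node of $S$ forces the discriminant curve $\Delta$ to ``lose a component or drop in degree.'' The discriminant of the projection from a node is always a plane sextic (the branch locus of the degree-two map $S\dashrightarrow\mathbb{P}^2$), and by Proposition~\ref{proposition:C6} the other nodes of $S$ contribute nodes, cusps, or tacnodes of this sextic --- they never lower its degree. So the classical Iskovskikh-type rationality criteria for conic bundles over $\mathbb{P}^2$ (which require discriminant degree at most $4$, or $5$ with an even theta-characteristic) are not triggered, and no rational section appears for free. Worse, the machinery needed to put this conic bundle in standard form (Theorem~\ref{theorem:conic-bundle-standard-weak-del-Pezzo}) assumes $X$ is $\mathbb{Q}$-factorial, which fails for every solid with at least eleven nodes; in the non-$\mathbb{Q}$-factorial case the relative Picard rank of $X_0$ over $\mathbb{P}^2$ exceeds one and the analysis would have to be redone from scratch. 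If you want to salvage a conic-bundle argument, the non-$\mathbb{Q}$-factoriality itself is the feature to exploit: it produces an extremal contraction of $X'$ other than the one to $\mathbb{P}^3$, and that is exactly what the paper's proof of Theorem~\ref{theorem:non-factorial} does.
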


In fact, we prove a stronger assertion.
To describe it, let us recall that a variety is said to be $\mathbb{Q}$-factorial if every Weil divisor on it is $\mathbb{Q}$-Cartier.
For a nodal variety, this condition is equivalent to the coincidence of Weil and Cartier divisors.

\begin{example}
\label{example:cubic-threefold}
Let $S$ be a nodal quartic surface in $\mathbb{P}^3$ with homogeneous co\-or\-di\-na\-tes~$x,y,z,t$ such that $S$ is given by the equation $g^2=4xh$,
where $g$ and $h$ are some quadric and cubic forms, respectively.
Then $S$  is singular at the points given by~\mbox{$x=g=h=0$}.
Since we assume that $S$ is nodal, this system of equations gives
exactly six po\-ints~$P_1,\ldots,P_6$.
Let~\mbox{$\tau\colon X\to\mathbb{P}^3$} be a double cover branched over $S$.
Then there exists a commutative diagram
$$
\xymatrix{
X^\prime\ar@{->}[rr]^{\alpha}\ar@{->}[d]_{\beta}&&V_3\ar@{-->}[d]^{\gamma}\\
X\ar@{->}[rr]^{\tau}&&\mathbb{P}^{3}.}
$$
Here $V_3$ is a smooth cubic threefold in $\mathbb{P}^4$
with homogeneous coordinates $x,y,z,t,w$
such that it is given by equation
$$
w^2x+wg+h=0,
$$
the map $\gamma$ is a linear projection from the point $P=[0:0:0:0:1]$, the map $\alpha$ is the blow up of this point,
and $\beta$ is the contraction of the proper transforms of six lines on $V_3$
that pass through $P$ to the singular points of~$X$.
Then the image on $X$ of the $\alpha$-exceptional surface is not a $\mathbb{Q}$-Cartier divisor.
In particular, $X$ is not $\mathbb{Q}$-factorial.
Note that $\beta$ is a small resolution of singularities of~$X$
at the points~\mbox{$P_1,\ldots,P_6$}.
Thus, $V_3$ is singular if and only if $P_1,\ldots,P_6$ are the only singular points of $X$.
On the other hand,  $V_3$ is irrational if and only if it is smooth (see \cite[Theorem~13.12]{ClemensGriffiths}).
Thus, $X$ is irrational if and only if it has exactly six singular points.
\end{example}

It turns out that Example~\ref{example:cubic-threefold} provides the only construction
of irrational nodal quartic double solids that are not $\mathbb{Q}$-factorial.
Namely, we prove the following result.

\begin{theorem}
\label{theorem:non-factorial}
A non-$\mathbb{Q}$-factorial nodal quartic double solid
is rational unless it has exactly six nodes
and is described by Example~\ref{example:cubic-threefold}.
\end{theorem}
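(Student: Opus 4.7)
Let $X$ be a non-$\mathbb{Q}$-factorial nodal quartic double solid, $\tau\colon X\to\mathbb{P}^{3}$ its double cover branched over a nodal quartic $S=\{f=0\}$, and $\iota$ the covering involution. If $X$ has at least eleven nodes, Theorem~\ref{theorem:CPS-11} gives rationality immediately, so we may assume $X$ has at most ten nodes. The plan is to prove that non-$\mathbb{Q}$-factoriality forces a factorisation $f=h^{2}-\ell g$ with $h$ a quadric and $\ell,g$ two forms of complementary degrees, and then to show that such a factorisation either places $X$ in the setting of Example~\ref{example:cubic-threefold} or yields an explicit rationality construction. To extract the factorisation, note that since $\mathrm{Pic}(X)=\mathbb{Z}\cdot H$ with $H=\tau^{*}\mathcal{O}_{\mathbb{P}^{3}}(1)$ but $\mathrm{Cl}(X)\neq\mathrm{Pic}(X)$, there exists a non-Cartier Weil divisor $D$ on $X$; the $\iota$-invariant class $D+\iota(D)$ is pulled back from a surface $T=\{\ell=0\}\subset\mathbb{P}^{3}$ of some degree $d$, and $\tau^{-1}(T)=D+\iota(D)$ splits nontrivially. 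A defect analysis on a small resolution $\widetilde{X}\to X$ reduces the problem to $d\in\{1,2,3\}$, and the splitting then forces $f=h^{2}-\ell g$ with $\deg h=2$ and $\deg g=4-d$; the base locus $\{\ell=h=g=0\}$, of length $2d(4-d)$, consists of nodes of $X$.

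When $d\in\{1,3\}$, the symmetry $\ell\leftrightarrow g$ in $f=h^{2}-\ell g$ makes the two subcases coincide and matches Example~\ref{example:cubic-threefold}, producing the commutative diagram with a cubic threefold $V_{3}\subset\mathbb{P}^{4}$. The six points $\{\ell=h=g=0\}$ are precisely the six nodes of $X$ contracted by $\beta$, while any further nodes of $X$ correspond bijectively to the singular points of $V_{3}$. If $X$ has exactly six nodes, then $V_{3}$ is smooth and hence irrational by Clemens--Griffiths, so $X$ is the irrational exception of Example~\ref{example:cubic-threefold}; if $X$ has more than six nodes, then $V_{3}$ is a nodal cubic threefold and is classically rational, so $X$ inherits rationality through the diagram. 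When $d=2$, the forms $\ell,h,g$ are all quadrics, and the relation $(w-h)(w+h)=-\ell g$ on $X$ defines a rational map $\phi\colon X\dashrightarrow\mathbb{P}^{1}$ via $[w-h:\ell]=[-g:w+h]$, whose fibre over $[\alpha:\beta]$ is the quadric surface $\{\alpha^{2}\ell+2\alpha\beta h+\beta^{2}g=0\}\subset\mathbb{P}^{3}$ (with $w$ determined by the fibre coordinate). The eight base points $\{\ell=h=g=0\}$ of the pencil give constant sections of $\phi$, so the quadric bundle admits a rational section, is birationally trivial, and $X$ is rational.

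The main obstacle is the first step: producing the factorisation $f=h^{2}-\ell g$ with $\deg\ell\le 3$ from non-$\mathbb{Q}$-factoriality alone. This requires a careful analysis of the class group on a small resolution $\widetilde{X}$, a numerical or cohomological bound on the degree of a minimal non-Cartier Weil divisor, and the exclusion of higher-degree splittings in the range $n\le 10$. Once the factorisation is available, the remaining case work—a direct appeal to Example~\ref{example:cubic-threefold} for $d\in\{1,3\}$ and the quadric pencil construction for $d=2$—is comparatively concrete and follows the explicit birational geometry already sketched in the paper.
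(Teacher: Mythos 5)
Your proposal has a genuine gap at its central step, together with a circularity. The circularity first: you invoke Theorem~\ref{theorem:CPS-11} to dispose of the case of eleven or more nodes, but in the paper Theorem~\ref{theorem:CPS-11} is itself \emph{deduced} from Theorem~\ref{theorem:non-factorial} (via Corollary~\ref{corollary:factorial-double-solids}), so it cannot be used as an input here. This is only repairable if the rest of your argument works without the bound $|\mathrm{Sing}(S)|\leqslant 10$, i.e.\ for up to sixteen nodes. The substantive gap is the step you yourself flag as ``the main obstacle'': deducing from non-$\mathbb{Q}$-factoriality a factorisation $f=h^{2}-\ell g$ with $\deg\ell\in\{1,2,3\}$. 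What follows readily from non-$\mathbb{Q}$-factoriality is only that some irreducible non-Cartier Weil divisor $D$ satisfies $D\neq\iota(D)$, so that $\tau^{-1}(T)$ splits over $T=\tau(D)$ of \emph{some} degree $d$; converting ``$f|_{T}$ is a square in the function field of $T$'' into a global polynomial identity $f=h^{2}-\ell g$ (rather than a square only up to the ideal of a possibly non-normal, non-reduced, or reducible $T$), controlling $d$, and excluding the degenerate case $d=4$ is precisely where all the difficulty of the theorem is concentrated, and none of it is carried out. Since you acknowledge this step is missing, the proposal is a plan rather than a proof. There are also smaller unverified claims in the endgame (that the extra nodes of $X$ biject with singular points of $V_{3}$, that $V_{3}$ cannot degenerate to something reducible or non-normal, that the length-$2d(4-d)$ base scheme consists of nodes), though these are comparatively routine.

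For contrast, the paper's proof avoids explicit equations entirely and runs the Minimal Model Program: take a $\mathbb{Q}$-factorialization $X'\to X$ (which has Picard rank at least $2$ precisely because $X$ is not $\mathbb{Q}$-factorial), contract a $K$-negative extremal ray $\eta\colon X'\to Y$, and use the divisibility $-K_{X'}\sim f^{*}\tau^{*}\mathcal{O}_{\mathbb{P}^{3}}(2)$ to see that a conic-bundle contraction has a section, a del Pezzo fibration has quadric fibres, and a birational contraction is (by Cutkosky's classification) the blow-up of a smooth point whose anticanonical model is a cubic threefold of degree $24/8=3$; rationality then follows except when that cubic is smooth, which is exactly Example~\ref{example:cubic-threefold}. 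That route replaces your missing degree bound with the structure theory of extremal contractions, and in particular works uniformly for any number of nodes. If you want to salvage your approach, the honest task is to prove the existence of the low-degree contact surface, for instance by relating the defect to the failure of the nodes to impose independent conditions on quadrics as in Theorem~\ref{theorem:Clemens}; as written, the key implication is asserted rather than established.
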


Note that the $\mathbb{Q}$-factoriality of nodal quartic double solids can be easily verified using the following result of Clemens.

\begin{theorem}[{\cite[\S3]{Clemens}}]
\label{theorem:Clemens}
Let $X$ be a double cover of $\mathbb{P}^3$ branched over a nodal quartic surface $S$.
Then $X$ is $\mathbb{Q}$-factorial if and only if the nodes of $S$ impose independent linear conditions on quadrics in $\mathbb{P}^3$.
\end{theorem}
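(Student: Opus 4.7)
The plan is to recast $\mathbb{Q}$-factoriality as the vanishing of the \emph{defect}
$$\delta(X) := \mathrm{rk}\bigl(\mathrm{Cl}(X)\otimes\mathbb{Q}\bigr) - \mathrm{rk}\bigl(\mathrm{Pic}(X)\otimes\mathbb{Q}\bigr),$$
and to compute $\delta(X)$ cohomologically. The goal is to show that $\delta(X)$ equals the codimension of the image of the evaluation map $\mathrm{ev}\colon H^0(\mathbb{P}^3,\mathcal{O}(2)) \to \bigoplus_{i=1}^n \mathbb{C}_{P_i}$ at the nodes of $S$.

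I would work with two different resolutions of $X$ in parallel. First, an analytic small resolution $\sigma\colon Y_{\mathrm{sm}}\to X$ contracting pairwise disjoint rational curves $C_i$ over the nodes; this yields the identification $\mathrm{Cl}(X)\cong\mathrm{Pic}(Y_{\mathrm{sm}})$, inside which $\mathrm{Pic}(X)$ is the kernel of the intersection pairing $D\mapsto (D\cdot C_i)_i$. Second, the big resolution $\pi\colon Y\to X$, which replaces each node by an exceptional quadric $Q_i\cong\mathbb{P}^1\times\mathbb{P}^1$. Crucially, $Y$ can also be realized as the double cover of $Z:=\mathrm{Bl}_{P_1,\ldots,P_n}\mathbb{P}^3$ branched along the proper transform $\tilde S\subset Z$ of $S$, which is a smooth K3 surface. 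Since $S$ has multiplicity $2$ at each node, the branch class factors as $[\tilde S]=2\mathcal{L}$ with $\mathcal{L}=2H-\sum_i E_i$, and $H^0(Z,\mathcal{L})$ is canonically the space of quadrics in $\mathbb{P}^3$ vanishing at every $P_i$; this is how $\mathcal{O}_{\mathbb{P}^3}(2)$ enters the statement.

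Next, I would translate the problem into second Betti numbers. Because $X$ has rational Gorenstein singularities, $H^i(Y,\mathcal{O}_Y)=0$ for $i>0$, so $H^{2,0}(Y)=0$ and the Lefschetz $(1,1)$-theorem gives $\mathrm{Pic}(Y)\otimes\mathbb{Q}=H^2(Y,\mathbb{Q})$. Comparing the two resolutions, the big resolution satisfies $b_2(Y)=\mathrm{rk}\,\mathrm{Cl}(X)+n$. On the other hand, the involution of the double cover $\pi\colon Y\to Z$ decomposes $H^2(Y,\mathbb{Q})=\pi^*H^2(Z,\mathbb{Q})\oplus H^2(Y,\mathbb{Q})^{-}$, with $b_2(Z)=n+1$. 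The anti-invariant summand is computed through the double-cover formula $\pi_*\mathcal{O}_Y=\mathcal{O}_Z\oplus\mathcal{L}^{-1}$ and the Koszul sequence
$$0\to \mathcal{I}_{\{P_i\}}(2)\to\mathcal{O}_{\mathbb{P}^3}(2)\to\bigoplus_i\mathbb{C}_{P_i}\to 0,$$
which, after untwisting by $\mathcal{L}^{-1}=-2H+\sum E_i$ on $Z$, identifies the excess contribution with $\dim\mathrm{coker}\,\mathrm{ev}$. Equating the two expressions for $b_2(Y)$ and separating $\mathrm{rk}\,\mathrm{Pic}(X)$ from $\mathrm{rk}\,\mathrm{Cl}(X)$ then yields $\delta(X)=\dim\mathrm{coker}\,\mathrm{ev}$, which is the desired equivalence.

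The main obstacle is the precise calculation of $H^2(Y,\mathbb{Q})^{-}$: one must control the Leray/eigenspace decomposition for the branched cover $\pi$, compute the relevant twisted cohomology on the blow-up $Z$, and verify that the correct twist is $\mathcal{L}^{-1}$, which in turn forces the specific degree $2$ appearing in the statement through the identity $\mathcal{L}=\tfrac12[\tilde S]$. Carefully distinguishing contributions from the blow-up exceptional divisors $E_i$, the nodes $\tilde P_i\in X$, and the transcendental part of $H^2(\tilde S)$ is the delicate bookkeeping; no other degree than $2$ produces the right pairing of classes on $Y$ with sections on $\mathbb{P}^3$, so the quadric condition is rigidly built into the geometry of the branched cover.
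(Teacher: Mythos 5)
The paper does not prove this statement at all: it is quoted from \cite[\S3]{Clemens} and used as a black box, so there is no ``paper's proof'' to compare against. Your outline is the standard defect computation (essentially Clemens's argument in its modern streamlined form), and the strategy is sound: realize the big resolution $Y$ as the double cover of $Z=\mathrm{Bl}_{P_1,\ldots,P_n}\mathbb{P}^3$ branched along the smooth surface $\tilde S\in|2\mathcal{L}|$ with $\mathcal{L}=2H-\sum E_i$, use $H^2(Y,\mathcal{O}_Y)=0$ to identify $\mathrm{Pic}(Y)\otimes\mathbb{Q}$ with $H^2(Y,\mathbb{Q})$, and compare $b_2(Y)=\mathrm{rk}\,\mathrm{Cl}(X)+n$ with $b_2(Z)+\dim H^2(Y,\mathbb{Q})^-$. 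One simplification: you do not need the small resolution at all, since $\mathrm{rk}\,\mathrm{Pic}(X)=1$ by the Lefschetz-type theorem (the paper itself invokes this in the proof of Theorem 4.1), so the whole statement reduces to $\mathrm{rk}\,\mathrm{Cl}(X)=1+\dim\mathrm{coker}(\mathrm{ev})$.

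The one place where your sketch, as written, would not close is the computation of $H^2(Y,\mathbb{Q})^-$. The eigensheaf decomposition $\pi_*\mathcal{O}_Y=\mathcal{O}_Z\oplus\mathcal{L}^{-1}$ only controls $H^{0,2}$; since $H^{2,0}=H^{0,2}=0$, what you actually need is the anti-invariant part of $H^{1,1}$, which requires $\pi_*\Omega^1_Y=\Omega^1_Z\oplus\bigl(\Omega^1_Z(\log\tilde S)\otimes\mathcal{L}^{-1}\bigr)$, not just $\pi_*\mathcal{O}_Y$. The chain of identifications is then: Bott vanishing on $\mathbb{P}^3$ and on the $E_i\cong\mathbb{P}^2$ gives $H^*(Z,\Omega^1_Z\otimes\mathcal{L}^{-1})=0$, so the residue sequence yields $H^{1,1}(Y)^-\cong H^1(\tilde S,\mathcal{O}_{\tilde S}(-\mathcal{L}))$; the restriction sequence $0\to\mathcal{O}_Z(-3\mathcal{L})\to\mathcal{O}_Z(-\mathcal{L})\to\mathcal{O}_{\tilde S}(-\mathcal{L})\to 0$ together with $H^1(Z,-\mathcal{L})=H^1(\mathbb{P}^3,\mathcal{O}(-2))=0$ and $H^2(Z,-\mathcal{L})=0$ gives $H^1(\tilde S,-\mathcal{L})\cong H^2(Z,-3\mathcal{L})$; and Serre duality with $K_Z+3\mathcal{L}=\mathcal{L}$ gives $H^2(Z,-3\mathcal{L})\cong H^1(Z,\mathcal{L})^\vee=H^1(\mathbb{P}^3,\mathcal{I}_{\{P_1,\ldots,P_n\}}(2))^\vee\cong\mathrm{coker}(\mathrm{ev})^\vee$. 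So the quadrics enter through $\mathcal{L}$ via Serre duality (this is where ``$3d-4=2$'' really comes from), not directly through an untwist by $\mathcal{L}^{-1}$ of the Koszul sequence as you describe. You correctly flagged this step as the main obstacle; with the above it goes through, and the conclusion $\delta(X)=\dim\mathrm{coker}(\mathrm{ev})$ follows.
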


In particular, this theorem gives the following result that implies Theorem~\ref{theorem:CPS-11} by Theorem~\ref{theorem:non-factorial}.

\begin{corollary}
\label{corollary:factorial-double-solids}
Nodal quartic double solids with at least eleven nodes are not \mbox{$\mathbb{Q}$-factorial}.
\end{corollary}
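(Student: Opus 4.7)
The plan is that this corollary follows almost immediately from Theorem~\ref{theorem:Clemens} together with a dimension count, so no substantive obstacle should arise.

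First I would recall that the space $H^0(\mathbb{P}^3,\mathcal{O}_{\mathbb{P}^3}(2))$ of quadric forms in four variables has dimension $\binom{5}{3}=10$. If $p_1,\dots,p_n$ denote the nodes of the branch surface $S$, then the assertion that these points impose independent linear conditions on quadrics means precisely that the evaluation map
\[
\mathrm{ev}\colon H^0(\mathbb{P}^3,\mathcal{O}_{\mathbb{P}^3}(2))\longrightarrow \bigoplus_{i=1}^{n}\mathbb{C}_{p_i}
\]
has rank equal to $n$. Since the source is $10$-dimensional, the rank of $\mathrm{ev}$ is at most $10$, so the condition of independence can only hold when $n\le 10$.

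Next I would apply this observation: if $X$ is a nodal quartic double solid with at least eleven nodes, i.e.\ $n\ge 11$, then the evaluation map above cannot be surjective, and therefore the nodes of the branch surface $S$ fail to impose independent linear conditions on quadrics in $\mathbb{P}^3$. Invoking Theorem~\ref{theorem:Clemens}, this is exactly the criterion that fails, so $X$ is not $\mathbb{Q}$-factorial.

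The only step that might require any thought is verifying that the formulation of \emph{independent linear conditions} used in Theorem~\ref{theorem:Clemens} really is the one above (rank of the evaluation map equals the number of points), but this is the standard convention and agrees with the usage in \cite{Clemens}. With this bookkeeping in place the proof is complete, and Theorem~\ref{theorem:CPS-11} follows by combining this corollary with Theorem~\ref{theorem:non-factorial}.
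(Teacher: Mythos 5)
Your proof is correct and is exactly the argument the paper intends: the corollary is stated as an immediate consequence of Theorem~\ref{theorem:Clemens}, since $h^0(\mathbb{P}^3,\mathcal{O}_{\mathbb{P}^3}(2))=10$ and hence eleven or more points can never impose independent conditions on quadrics. No difference in approach.
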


On the other hand, nodal quartic double solids with at most five nodes are always $\mathbb{Q}$-factorial.
This is implied by the following result.

\begin{theorem}[{\cite{HongPark}, \cite[Theorem~6]{Cheltsov}}]
\label{theorem:Cheltsov}
A nodal quartic double solid with at most seven nodes is $\mathbb{Q}$-factorial unless it has exactly six nodes
and is described by Example~\ref{example:cubic-threefold}.
\end{theorem}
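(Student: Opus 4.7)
By Theorem~\ref{theorem:Clemens}, it is enough to show that the set of nodes $\Sigma$ of the branch quartic $S$ imposes $n = |\Sigma|$ independent linear conditions on the $10$-dimensional space $H^0(\mathbb{P}^3, \mathcal{O}(2))$ of quadrics whenever $n\le 7$, unless $n = 6$ and $S$ has the form of Example~\ref{example:cubic-threefold}. Assume for contradiction that $\Sigma$ imposes dependent conditions and that $S$ is not the quartic of Example~\ref{example:cubic-threefold}; pass to a minimal failing subset $\Sigma_0\subseteq\Sigma$, so that every proper subset of $\Sigma_0$ already imposes independent conditions.

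A Hilbert-function computation in degree two for zero-dimensional subschemes of $\mathbb{P}^3$, carried out against the ideals of a line, a smooth plane conic and a plane, classifies the minimal failing configurations of at most seven points into three types: (a) four collinear points; (b) six points on a smooth plane conic; (c) seven coplanar points with no six of them on a conic. Configuration (a) is incompatible with nodality of $S$: a line $L$ through four nodes has $L\cdot S = 4$ with each node contributing intersection multiplicity at least $2$, so $L\subset S$; a standard local calculation at a node lying on a line contained in $S$ then bounds the number of nodes on $L$ by three, a contradiction. Configuration (c) is excluded by a genus argument: $S\cap\pi$ would be a plane quartic with seven singular points and arithmetic genus $3$, hence non-reduced; enumeration of the possible non-reduced decompositions ($2C$, $C+2L$, $2L_1+2L_2$, $3L+L'$, $4L$) shows in each case that either four of the seven nodes are collinear (contradicting minimality via configuration (a)) or six of them lie on a smooth plane conic (contradicting minimality via configuration (b)).

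It remains to handle configuration (b): six nodes of $S$ lie on a smooth plane conic. Choose coordinates so that $\pi = \{x=0\}$ and $C = \{x = 0,\ g(y,z,t) = 0\}$ with $g$ non-degenerate. The plane section $S\cap\pi$ is a plane quartic whose singular locus contains six points of the smooth conic $C$; the same enumeration of possible decompositions of a plane quartic, now requiring at least six singularities on a fixed smooth conic, forces $S\cap\pi = 2C$ as plane schemes. Therefore the defining equation $f$ of $S$ satisfies $f|_{x=0} = \lambda g^2$ for some nonzero $\lambda$, and after rescaling one writes $f = g^2 - 4xh$ for some cubic form $h$ in $x,y,z,t$, which is exactly the equation of Example~\ref{example:cubic-threefold}. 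A final local analysis, using the construction of Example~\ref{example:cubic-threefold} and the correspondence between nodes of $S$ off the plane $\pi$ and singular points of the auxiliary cubic threefold $V_3$, rules out extra nodes of $S$ while keeping $n\le 7$, forcing $n = 6$ --- contradicting our assumption that $S$ is not of the form in Example~\ref{example:cubic-threefold}.

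\emph{Main obstacle.} The heart of the argument is the classification of minimal failing configurations in $\mathbb{P}^3$ for the linear system of quadrics (especially the case-by-case exclusion of seven coplanar nodes), together with the final step in case (b) showing that the construction of Example~\ref{example:cubic-threefold} is essentially rigid, i.e., that any additional node of $S$ on top of the distinguished six on $C$ would push the total node count strictly above seven via the induced singularities of the cubic threefold $V_3$.
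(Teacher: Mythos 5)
The paper does not prove this statement: it is imported verbatim from \cite{HongPark} and \cite[Theorem~6]{Cheltsov}, so there is no internal proof to compare against. Your overall strategy --- reduce to independence of conditions on quadrics via Theorem~\ref{theorem:Clemens}, classify minimal failing configurations of at most seven points, and exclude them using the geometry of nodal quartics --- is indeed the approach of the cited references, and your exclusions of four collinear nodes and of seven coplanar nodes are sound (a line through four nodes lies in $S$ and a line in $S$ carries at most three nodes; a reduced plane quartic has at most six singular points, and each non-reduced decomposition is handled correctly).

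There are, however, two genuine gaps. First, the classification of minimal failing configurations, which is the entire technical content of the cited theorem, is only asserted; and as stated it is incomplete: six points distributed three-and-three on two coplanar lines (a \emph{reducible} conic) also impose only five conditions on quadrics, every five of them imposing independent conditions, so this is a minimal failing configuration missed by your trichotomy. It is realizable by nodes of a nodal quartic (both lines lie in $S$, and $S\cap\Pi=2L_1+2L_2$), and must be absorbed into case (b) by allowing singular conics; the conclusion $S\cap\Pi=2Q$ and hence the form $g^2=4xh$ still follows. Second, and more seriously, the final step of case (b) cannot be carried out as described: a nodal quartic of the form $g^2=4xh$ \emph{can} have a seventh node. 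Take a cubic threefold $V_3=\{w^2x+wg+h=0\}$ with exactly one node at a point $Q$ in general position with respect to $P=[0:0:0:0:1]$ and the six lines through $P$; the discriminant quartic $g^2=4xh$ then has exactly seven singular points (the six points of $\{x=g=h=0\}$ plus the image of $Q$), all of them nodes, and the associated double solid is non-$\mathbb{Q}$-factorial by Theorem~\ref{theorem:Clemens} since six of its nodes lie on a conic. So no local analysis will force $n=6$; the exceptional clause for seven nodes has to be phrased as ``$S$ is described by Example~\ref{example:cubic-threefold}'' rather than ``has exactly six nodes and is described by Example~\ref{example:cubic-threefold}''. (For at most six nodes your argument does close up, and that is the only range in which the paper invokes this theorem.)
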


Thus, one can generalize our Theorem~\ref{theorem:CPS} as follows.

\begin{conjecture}
\label{conjecture:factorial}
Every $\mathbb{Q}$-factorial nodal quartic double solid is irrational.
\end{conjecture}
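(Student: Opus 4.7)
The plan is a case analysis organized by the number $n$ of nodes. By Theorem~\ref{theorem:Clemens}, $\mathbb{Q}$-factoriality forces the nodes to impose independent linear conditions on quadrics in $\mathbb{P}^3$, and since $h^0(\mathbb{P}^3,\mathcal{O}_{\mathbb{P}^3}(2))=10$ this bounds $n\leq 10$ (a fact already recorded as Corollary~\ref{corollary:factorial-double-solids}). The range $n\leq 6$ is fully covered by Theorem~\ref{theorem:CPS}, so only the four cases $n\in\{7,8,9,10\}$ remain.

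For $n=7$ I would push the intermediate Jacobian strategy used for $n\leq 6$ one step further. Let $\tilde X\to X$ be the big resolution obtained by blowing up the nodes; the defect-zero hypothesis guaranteed by Theorem~\ref{theorem:Clemens} gives $h^{1,2}(\tilde X)=10-n=3$, so the Clemens--Griffiths criterion reduces the task to showing that the three-dimensional principally polarized intermediate Jacobian $J(\tilde X)$ is neither the Jacobian of a smooth genus-three curve nor a product of lower-dimensional Jacobians. A pencil of hyperplane sections through a chosen node should produce a conic bundle model of a small modification of $X$ whose discriminant is a plane curve of controllable genus, and the attached Prym variety analysis, in the spirit of Beauville and Varley, would identify $J(\tilde X)$ explicitly and distinguish it from Jacobians of curves by a singular-locus-of-theta analysis.

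For $n\in\{8,9\}$ every principally polarized abelian variety of dimension $\dim J(\tilde X)=10-n\leq 2$ is a Jacobian, so Clemens--Griffiths cannot by itself give an obstruction. Here I would switch to unramified cohomology or birational rigidity arguments: classify the possible conic bundle structures on small modifications of $X$ and try to force either a non-trivial class in $H^2_{\mathrm{nr}}(k(X),\mathbb{Z}/2)$ coming from a sufficiently degenerate discriminant curve, or else rule out birational maps to other Mori fibre spaces in the style of Iskovskikh--Manin. For $n=10$ the Hodge-theoretic invariants of $\tilde X$ vanish altogether, and the question becomes purely torsion-theoretic: one needs to produce a non-vanishing class in $H^3(\tilde X,\mathbb{Z})_{\mathrm{tors}}$ by an Artin--Mumford-type conic bundle construction, and crucially one that is compatible with the $\mathbb{Q}$-factoriality constraint, since the original Artin--Mumford example arises from a non-$\mathbb{Q}$-factorial quartic double solid.

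The principal obstacle is the boundary case $n=10$. Both of the established strategies for three-dimensional irrationality --- intermediate Jacobians and Brauer-type torsion --- lie at or past their natural limits in this range: the former is trivially satisfied, and the latter has historically relied on precisely the non-$\mathbb{Q}$-factorial situation the conjecture forbids. I expect a complete proof to require either a genuinely new birational invariant, or a non-specializing form of Voisin's decomposition-of-the-diagonal technique that applies uniformly across all $\mathbb{Q}$-factorial ten-nodal quartic double solids rather than merely to very general members of an explicit family.
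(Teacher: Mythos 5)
The statement you have been asked to address is stated in the paper as a \emph{conjecture}, and the paper does not prove it. What the paper establishes is: (a) the case of at most six nodes (Theorem~\ref{theorem:CPS}), and (b) that the full conjecture would follow from Shokurov's Conjecture~\ref{conjecture:Shokurov} (this is Corollary~\ref{corollary:Shokurov-factorial}), via the observation that a $\mathbb{Q}$-factorial nodal quartic double solid is birational to a standard conic bundle $\nu\colon V\to U$ over a surface obtained from $\mathbb{P}^2$ by $|\mathrm{Sing}(S)|-1$ blow ups, with degeneration curve $\Delta\sim -2K_U$, so that $|2K_U+\Delta|\ne\varnothing$. Your proposal is likewise not a proof but a research program, and you say so yourself; that candour is appropriate, but it means the submission cannot stand as a proof of the statement, and indeed no unconditional proof is currently known.

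Beyond this, one concrete step of your plan fails. For $n=7$ you propose to push the Clemens--Griffiths strategy by showing that the three-dimensional intermediate Jacobian of a resolution is not a sum of Jacobians of smooth curves. But, as the paper notes in the introduction, for nodal quartic double solids with more than six nodes the intermediate Jacobian of a resolution of singularities \emph{is} a sum of Jacobians of curves, so the criterion is vacuous already at $n=7$; this is exactly why the paper's argument stops at six nodes (there the degeneration curve $\Delta$ sits canonically embedded, cut out by quadrics, on a weak del Pezzo surface of degree $d=10-|\mathrm{Sing}(S)|\geqslant 4$, and Shokurov's Main Theorem applies; for $d\leqslant 3$ this breaks down). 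Your discussion of $n\in\{7,8,9,10\}$ correctly identifies the known obstacles but supplies no argument in any of these cases. The preliminary reduction to $n\leqslant 10$ via Theorem~\ref{theorem:Clemens} is correct and agrees with Corollary~\ref{corollary:factorial-double-solids}.
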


By Theorem~\ref{theorem:non-factorial}, this conjecture
gives a complete answer to the question in the title of this paper
in the case of nodal quartic double solids.
Here we show that it follows from Shokurov's famous \cite[Conjecture~10.3]{Shokurov},
see Corollary~\ref{corollary:Shokurov-factorial}.
Note that the intermediate Jacobians of resolutions of
singularities for nodal quartic double solids with more than six nodes
are sums of Jacobians of curves,
so that the methods of~\cite{ClemensGriffiths} are
not applicable to prove Conjecture~\ref{conjecture:factorial} in this case.

\medskip

The paper is organized as follows.
In Section~\ref{section:conic-bundles}, we review some well-known facts about conic bundles over rational surfaces
including their Prym varieties and results concerning irrationality of such threefolds.
In Section~\ref{section:double-solids}, we show how to birationally transform a nodal $\mathbb{Q}$-factorial singular quartic double solid
into a conic bundle and study the singularities of its degeneration curve.
In Section~\ref{section:standard}, we present an explicit birational transformation of the latter conic bundle to a standard one.
In Section~\ref{section:factorial}, we prove Theorem~\ref{theorem:CPS}
and show that \cite[Conjecture~10.3]{Shokurov} implies our Conjecture~\ref{conjecture:factorial}.
In Section~\ref{section:non-factorial}, we prove Theorem~\ref{theorem:non-factorial}.
In a sequel \cite{CheltsovPrzyjalkowskiShramov}, we apply Theorems~\ref{theorem:CPS} and \ref{theorem:CPS-11}
to nodal quartic double solids having an icosahedral symmetry.

\medskip
{\bf Acknowledgements.}
The authors are grateful to Olivier Debarre, Yuri Prokhorov, Vyacheslav Shokurov,
and Andrey Trepalin for useful discussions.
This work was started when Ivan Cheltsov and Constantin Shramov were visiting the Mathematisches Forschungsinstitut in Oberwolfach
under Research in Pairs program, and was completed when they were visiting Centro Internazionale per la Ricerca Matematica in Trento
under a similar program.
We want to thank these institutions for excellent working conditions,
and personally Marco Andreatta and Augusto Micheletti for hospitality.

Ivan Cheltsov was supported by the Russian Academic Excellence Project ``5-100'' and by the Royal Society grant No. IES\slash R1\slash 180205. Victor Przyjalkowski was supported by Laboratory of Mirror Symmetry NRU
HSE, RF government grant, ag. No. 14.641.31.0001.
Constantin Shramov was supported by the Russian Academic Excellence Project ``5-100'',
the grants  RFFI 15-01-02158, RFFI 15-01-02164, and the Foundation for the
Advancement of Theoretical Physics and Mathematics ``BASIS''.
The last two authors are also a Young Russian Mathematics award winners and would like to thank its sponsors and jury.

\medskip

{\bf Notation and conventions.} All varieties are assumed to be algebraic, projective and defined over $\mathbb{C}$.
By a \emph{node}, we mean an isolated ordinary double singular
point of a variety of arbitrary dimension.
A variety is called \emph{nodal} if its only singularities are nodes.
By a \emph{cusp}, we mean a plane curve singularity of type $\mathbb{A}_2$.
By a \emph{tacnode}, we mean a plane curve singularity of type $\mathbb{A}_3$.
By~$\mathbb{F}_n$ we denote a Hirzebruch
surface~\mbox{$\P\big(\mathcal{O}_{\P^1}\oplus\mathcal{O}_{\P^1}(n)\big)$}.
Given a birational morphism $\varphi\colon X\to Y$ and a linear
system $\mathcal{M}$ on $Y$,
by a proper transform (sometimes also called a homaloidal transform)
of $\mathcal{M}$ we mean the linear system generated by
divisors $\varphi^{-1}M$, where $M$ is a general divisor in $\mathcal{M}$; since a rational
map is a morphism in a complement to a closed subset of codimension~$2$,
we will also use this terminology in case when $\varphi$ is an arbitrary
birational map. If $\mathcal{M}$ is base point free and
$\varphi$ is an arbitrary rational map, then the proper transform
of $\mathcal{M}$ is defined as a composition of its pull-back
via a regularization of $\varphi$ and a proper transform via
the corresponding birational map.

\section{Conic bundles over rational surfaces}
\label{section:conic-bundles}

Let $\nu\colon V\to U$ be a conic bundle such that $V$ is a threefold, and let $U$ be a surface.
Recall that $\nu$ is said to be \emph{standard} if both $V$ and $U$ are smooth,
and the relative Picard group of $V$ over $U$ has rank $1$.
It is well-known that there exists a commutative diagram
$$
\xymatrix{
V^\prime\ar@{->}[d]_{\nu^\prime}\ar@{-->}[rr]^{\rho}&&{V}\ar@{->}[d]^{{\nu}}\\
U^\prime\ar@{-->}[rr]^{\varrho}&&{U}}
$$
such that $\rho$ and $\varrho$ are birational maps, and $\nu^\prime$
is a standard conic bundle (see,
for example,~\mbox{\cite[Theorem~1.13]{Sarkisov2}}).
Because of this, we assume here that  $\nu$ is already standard.
In particular, we assume that $V$ and $U$ are both smooth.

In this section, we discuss some obstructions for $V$ to be rational.
In particular, we also assume that $U$ is rational,
since otherwise irrationality of $V$ is easy to show.

Denote by $\Delta$ the degeneration curve of the conic bundle $\nu$.
Since $\nu$ is assumed to be standard, the curve $\Delta$ is nodal (see, for example, \cite[Corollary~1.11]{Sarkisov2}).
Restricting the conic bundle $\nu$ to~$\Delta$, taking the normalization of the resulting surface,
and considering the Stein factorization of the the induced morphism to $\Delta$,
we obtain a nodal curve $\Delta^\prime$ together with an involution $I$ on it such that
$$
\Delta^\prime/I\cong\Delta,
$$
the nodes of $\Delta^\prime$ are exactly the fixed points of $I$,
the involution $I$ does not interchange branches at these points, and nodes of $\Delta$ are exactly images of nodes of $\Delta'$. In particular,
the number of connected components of $\Delta^\prime$
is the same as that of $\Delta$.
Alternatively, one can construct $\Delta^\prime$ as a Hilbert scheme of lines in the fibers of $\nu$ over $\Delta$.

\begin{corollary}
\label{corollary:stability}
The curve $\Delta$ satisfies the following conditions:
\begin{itemize}
\item[(A)] \mbox{for every splitting $\Delta=\Delta_1\cup\Delta_2$,
the number $|\Delta_1\cap\Delta_2|$ is even};

\item[(B)] for every connected component $\Delta_1$ of the curve $\Delta$, one has $\Delta_1\not\cong\mathbb{P}^1$.
\end{itemize}
\end{corollary}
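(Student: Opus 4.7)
The plan is to deduce both statements from the structure of the degree-two map $\pi\colon \Delta' \to \Delta$ recalled above. Outside the nodes $\pi$ is étale, and at each node $p$ of $\Delta$ the unique preimage $p'$ is a node of $\Delta'$ with $\pi$ locally modelled by the map from $\operatorname{Spec}\mathbb{C}[u,v]/(uv)$ to $\operatorname{Spec}\mathbb{C}[x,y]/(xy)$ sending $(u,v)$ to $(u^2,v^2)$; this is the only local information that the proof needs.

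For (A), I would take a splitting $\Delta = \Delta_1 \cup \Delta_2$, assume without loss of generality that $\Delta_1$ and $\Delta_2$ share no common irreducible components, and pass to normalizations to obtain an induced degree-two morphism $\tilde{\pi}_1\colon\widetilde{\pi^{-1}(\Delta_1)}\to\widetilde{\Delta_1}$ of smooth projective curves. Reading off the local model above, $\tilde{\pi}_1$ is étale over every smooth point of $\Delta_1$ not lying in $\Delta_1\cap\Delta_2$; over each point of $\Delta_1\cap\Delta_2$, which is a smooth point of $\Delta_1$ with only one local branch of $\Delta'$ above it lying in $\pi^{-1}(\Delta_1)$, it is honestly ramified of the form $z\mapsto z^2$, contributing one ramification point; and over each self-node of $\Delta_1$ (a node of $\Delta$ whose two local branches both lie in $\Delta_1$) it contributes two ramification points, one per branch. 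Hence the ramification divisor $R$ of $\tilde{\pi}_1$ has degree $|\Delta_1\cap\Delta_2| + 2n_1$, where $n_1$ counts the self-nodes of $\Delta_1$. Riemann--Hurwitz then reads
\[
\chi\bigl(\widetilde{\pi^{-1}(\Delta_1)}\bigr) \;=\; 2\,\chi\bigl(\widetilde{\Delta_1}\bigr) - \deg R,
\]
and since the Euler characteristic of any smooth projective curve is even while $2n_1$ is even, $|\Delta_1\cap\Delta_2|$ must be even.

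For (B), I would argue by contradiction. If some connected component $\Delta_1$ of $\Delta$ were isomorphic to $\mathbb{P}^1$, then $\Delta_1$ would be smooth and disjoint from the remaining components of $\Delta$, and hence would contain no nodes of $\Delta$; consequently $\pi^{-1}(\Delta_1)\to\Delta_1$ would be an honest étale double cover of $\mathbb{P}^1$. Such a cover must be trivial, so $\pi^{-1}(\Delta_1)\cong\mathbb{P}^1\sqcup\mathbb{P}^1$; but then the single connected component $\Delta_1$ of $\Delta$ has a disconnected preimage in $\Delta'$, contradicting the property stated immediately before the corollary that the number of connected components of $\Delta'$ equals that of $\Delta$. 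The only step in the whole argument that requires any care is the ramification count for (A): it is important to remember that a node of $\Delta$ in $\Delta_1\cap\Delta_2$ appears as a \emph{smooth} point of $\Delta_1$ once $\Delta_2$ is discarded, and therefore contributes honest ramification of the restricted degree-two map, while a self-node of $\Delta_1$ remains a node on both sides of $\tilde{\pi}_1$ and contributes two ramification points.
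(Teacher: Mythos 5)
Your proof is correct and follows essentially the same route as the paper's (two-sentence) argument: for (A) you apply Riemann--Hurwitz to the induced double cover of the normalization of $\Delta_1$, which is exactly the paper's ``a double cover of a smooth curve is ramified over an even number of points,'' and for (B) you use that $\mathbb{P}^1$ admits no connected unramified double cover, detecting the contradiction via the equality of the numbers of connected components of $\Delta'$ and $\Delta$. The ramification bookkeeping at the nodes (one ramification point per point of $\Delta_1\cap\Delta_2$, two per self-node of $\Delta_1$) is the right elaboration of what the paper leaves implicit.
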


\begin{proof}
Assertion $(\mathrm{A})$ follows from the fact that a double cover of a smooth curve is ramified over an even number of points.
Assertion $(\mathrm{B})$ follows from the fact that the double co\-ver~$\Delta'\to \Delta$ is unramified over any smooth connected component of $\Delta$ since its nodes are images of nodes of $\Delta'$ and the fact that $\P^1$ does not have connected unramified double covers.
\end{proof}

In \cite{Shokurov}, Shokurov formulated the following conjecture.

\begin{conjecture}[{\cite[Conjecture~10.3]{Shokurov}}]
\label{conjecture:Shokurov}
If $|2K_U+\Delta|\ne\varnothing$, then $V$ is irrational.
\end{conjecture}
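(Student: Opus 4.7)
The plan is to approach this via the intermediate Jacobian obstruction of Clemens--Griffiths, exactly as for cubic threefolds and for the main irrationality results already cited in the introduction. If $V$ is rational, then its intermediate Jacobian $J(V)$, as a principally polarized abelian variety, must split as a product of Jacobians of smooth curves. So the strategy is to describe $J(V)$ explicitly in terms of the conic bundle data $(\nu,\Delta,\Delta')$ and to use the hypothesis $|2K_U+\Delta|\neq\varnothing$ to rule out any such decomposition.

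The first step is to identify $J(V)$ with the Prym variety $\mathrm{Prym}(\Delta'/\Delta)$ associated to the admissible double cover $\Delta'\to\Delta=\Delta'/I$ constructed just before the conjecture in the excerpt. In the smooth case this is Beauville's theorem; in the nodal case one has to use the compactified/generalized Prym variety attached to the admissible cover, and verify that the principal polarization still matches the one coming from $H^3(V,\mathbb{Z})$. Once this identification is in place, the question becomes purely one about Pryms: under the positivity assumption $|2K_U+\Delta|\neq\varnothing$, show that $\mathrm{Prym}(\Delta'/\Delta)$ is not isomorphic, as a principally polarized abelian variety, to a product of Jacobians of smooth curves.

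The key geometric input is to use a pencil in $|2K_U+\Delta|$ to produce a family of curves on $\Delta$ whose pullbacks to $\Delta'$ carry a lot of information about the theta divisor of the Prym. Concretely, for a general member $C\in|2K_U+\Delta|$ the adjunction-type identity shows that $C$ cuts on $\Delta$ a divisor which, together with the double cover, gives rise to a distinguished effective class in the Prym; varying $C$ sweeps out a subvariety of the Prym whose geometry (dimension, singularities of the image in the theta divisor, behavior under the Gauss map) can be used to distinguish it from any Jacobian or product of Jacobians. The prototype is Beauville's criterion via the singularities of the theta divisor and the Andreotti--Mayer locus, together with the Prym--Torelli type results; the role of $2K_U+\Delta$ is to guarantee that there are enough such curves to force the Prym into the ``general'' stratum where decomposition is impossible.

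The main obstacle, which is precisely why this remains a conjecture, is the last step: translating ``$|2K_U+\Delta|\neq\varnothing$'' into a concrete structural statement on $\mathrm{Prym}(\Delta'/\Delta)$ strong enough to exclude \emph{every} isomorphism with a product of Jacobians, not merely to exclude Jacobians of smooth irreducible curves. The reducible/decomposable case is delicate because $\Delta$ itself may be reducible, and each splitting $\Delta=\Delta_1\cup\Delta_2$ (as in Corollary~\ref{corollary:stability}(A)) induces a candidate splitting of the Prym; one would have to show that $|2K_U+\Delta|\neq\varnothing$ is incompatible with all such splittings being compatible with the principal polarization. A reasonable intermediate target, and what I would attempt first, is to prove the conjecture under the stronger hypothesis that $|2K_U+\Delta|$ has no fixed components and gives a big linear system; this should already suffice for the applications to $\mathbb{Q}$-factorial nodal quartic double solids indicated in the introduction, and it isolates the hard Prym-theoretic core from the combinatorics of degenerations.
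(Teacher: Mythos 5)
There is a fundamental mismatch here: the statement you are asked to prove is an \emph{open conjecture} of Shokurov, and the paper does not prove it. The paper only records that it is known for $U=\mathbb{P}^2$ (Beauville) and for $U=\mathbb{F}_n$ (Shokurov, via Corollary~\ref{corollary:Shokurov-2} and Theorem~\ref{theorem:Shokurov}), and then shows in Corollary~\ref{corollary:Shokurov-factorial} that the conjecture would \emph{imply} Conjecture~\ref{conjecture:factorial}. Your proposal is candid that it does not close the argument, so the honest verdict is that there is no proof to compare against and no proof offered.

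Beyond that, the strategy you outline --- identify $\mathrm{J}(V)$ with $\mathrm{Prym}(\Delta',I)$ and use $|2K_U+\Delta|\neq\varnothing$ to show the Prym is not a product of Jacobians --- is not merely incomplete but provably insufficient for the general statement. By Shokurov's Main Theorem (Theorem~\ref{theorem:Shokurov}), $\mathrm{Prym}(\Delta',I)$ \emph{is} a sum of Jacobians of smooth curves whenever $\Delta$ is hyperelliptic, trigonal, quasitrigonal, or an appropriate plane quintic, and such degeneration curves do occur with $|2K_U+\Delta|\neq\varnothing$. The paper states this explicitly in the introduction: for nodal quartic double solids with more than six nodes the intermediate Jacobians of the resolutions are sums of Jacobians of curves, so the Clemens--Griffiths obstruction vanishes identically there even though the conjecture still predicts irrationality in the $\mathbb{Q}$-factorial case. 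Hence no amount of theta-divisor or Andreotti--Mayer analysis on the Prym can establish the conjecture; one would need a different obstruction altogether (birational rigidity, torsion in $H^3$ \`a la Artin--Mumford, or decomposition-of-the-diagonal methods). What your approach \emph{can} deliver is exactly what the paper proves: under the additional hypotheses of Corollary~\ref{corollary:Shokurov-2} (connected, non-hyperelliptic, canonical image an intersection of quadrics, condition $(\mathrm{S})$), irrationality follows --- and the paper's proof of Theorem~\ref{theorem:CPS} in Section~\ref{section:factorial} is precisely the verification of those hypotheses for $d=K_U^2\geqslant 4$, after excising the $(-2)$-curve components of $\Delta$ via Remark~\ref{remark:Shokurov}.
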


It follows from \cite[Th\'eor\`eme~4.9]{Beauville}
that this conjecture holds for $U=\mathbb{P}^2$.
In \cite[\S10]{Shokurov}, Shokurov proved that Conjecture~\ref{conjecture:Shokurov} holds also for $U=\mathbb{F}_n$.

\begin{remark}
\label{remark:Prym}
Let $\Gamma$ be a connected nodal curve.
Suppose that there exists a connected nodal curve $\Gamma^\prime$ together with an involution $\iota$ on it
such that $\Gamma^\prime/\iota\cong\Gamma$, the nodes of $\Gamma^\prime$ are exactly the fixed points of $\iota$,
and $\iota$ does not interchange branches at these points.
Then one can construct a principally polarized abelian variety $\mathrm{Prym}(\Gamma^\prime,\iota)$
known as the \emph{Prym variety} of the pair~\mbox{$(\Gamma^\prime, \iota)$}.
For details and basic properties of
$\mathrm{Prym}(\Gamma^\prime,\iota)$, see \cite[\S0]{Beauville}
or~\cite{Shokurov}.
\end{remark}

Consider $\mathrm{Prym}(\Delta^\prime,I)$. Its importance is due to the following result.

\begin{theorem}[{see \cite[Proposition~2.8]{Beauville} and the discussion before it}]
\label{theorem:Jac-Prym}
Let $\mathrm{J}(V)$ be the intermediate Jacobian of $V$.
Then $\mathrm{J}(V)\cong\mathrm{Prym}(\Delta^\prime,I)$ (as principally polarized abelian varieties).
\end{theorem}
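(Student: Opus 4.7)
The plan is to combine a Hodge-theoretic analysis of $H^3(V,\mathbb{Z})$ via the Leray spectral sequence for $\nu\colon V\to U$ with a geometric Abel--Jacobi construction coming from the universal family of lines parametrized by $\Delta'$. The main obstacle will be matching the principal polarizations on the two sides.

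Since $\nu$ is a standard conic bundle over a rational surface, $V$ is rationally connected, so $h^{3,0}(V)=h^{1,0}(V)=0$ and $\mathrm{J}(V)=H^{1,2}(V)/H^{3}(V,\mathbb{Z})_{\mathrm{free}}$ is a principally polarized abelian variety with polarization induced by the cup-product pairing on $H^{3}(V,\mathbb{Z})$. To compute this Hodge structure I would apply the Leray spectral sequence for $\nu$: a fibre-by-fibre inspection shows that $R^{1}\nu_*\mathbb{Q}=0$, since every fibre --- smooth conic, pair of intersecting lines, or double line over a node of $\Delta$ --- has trivial $H^{1}$, while $R^{2}\nu_*\mathbb{Q}$ fits into a short exact sequence
\[
0\to\mathbb{Q}_{U}\to R^{2}\nu_*\mathbb{Q}\to j_*\bigl(\pi_*\mathbb{Q}_{\Delta'}/\mathbb{Q}_\Delta\bigr)\to 0,
\]
where $\pi\colon\Delta'\to\Delta$ is the double cover and $j\colon\Delta\hookrightarrow U$ is the inclusion; the quotient sheaf detects the choice of a line in a degenerate fibre. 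Combined with the vanishings $H^{1}(U,\mathbb{Q})=H^{3}(U,\mathbb{Q})=0$ coming from the rationality of $U$, the spectral sequence degenerates to produce an isomorphism $H^{3}(V,\mathbb{Q})\cong H^{1}(\Delta',\mathbb{Q})^{-}$ onto the $(-1)$-eigenspace of $I^{*}$. This already gives an isogeny $\mathrm{Prym}(\Delta',I)\to\mathrm{J}(V)$ of polarized Hodge structures.

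The same isogeny should be realized geometrically. The universal family $\mathcal{L}\subset V\times\Delta'$ of lines in the fibres of $\nu$, viewed as a correspondence and composed with the Abel--Jacobi map for $1$-cycles on $V$, defines a morphism of abelian varieties $\mathrm{AJ}\colon\mathrm{Jac}(\Delta')\to\mathrm{J}(V)$ (one works with the generalized Jacobian, since $\Delta'$ is nodal). Because $\mathcal{L}_\ell+\mathcal{L}_{I(\ell)}$ represents the class of a fibre of $\nu$, which is trivial in $\mathrm{J}(V)$, the map $\mathrm{AJ}$ kills elements of the form $[\ell]+[I(\ell)]$ and hence descends to a morphism $\mathrm{Prym}(\Delta',I)\to\mathrm{J}(V)$, agreeing, after comparing tangent spaces, with the Hodge-theoretic isogeny constructed above.

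The last and hardest step is to show this map is actually an isomorphism of principally polarized abelian varieties. Agreement of dimensions and surjectivity follow at once from the identification of Hodge structures in the second paragraph. The delicate point is matching the cup-product pairing on $H^{3}(V,\mathbb{Z})$ with the Prym pairing on $H^{1}(\Delta',\mathbb{Z})^{-}$: using the correspondence $\mathcal{L}$, one computes that the pullback via $\mathrm{AJ}$ of the intersection form on $\mathrm{J}(V)$ equals twice the restriction of the intersection form on $H^{1}(\Delta',\mathbb{Z})$ to the anti-invariant part, which is precisely the form defining the principal polarization of $\mathrm{Prym}(\Delta',I)$ after descending through $1+I^{*}$. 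The hypothesis that $I$ fixes precisely the nodes of $\Delta'$ and does not interchange branches there is essential here: without it the quotient polarization would fail to be principal, and no such isomorphism could exist.
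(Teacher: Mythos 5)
The paper does not prove this statement itself; it is quoted directly from Beauville (Proposition~2.8 of \emph{Vari\'et\'es de Prym et jacobiennes interm\'ediaires} and the discussion preceding it), and your sketch is a faithful reconstruction of exactly that argument: the Leray spectral sequence identification of $H^3(V,\mathbb{Q})$ with the anti-invariant part of $H^1(\Delta',\mathbb{Q})$, the Abel--Jacobi map induced by the family of lines in the degenerate fibres, and the factor-of-two comparison of polarizations that makes the Prym principally polarized under the admissibility condition on $(\Delta',I)$. Your outline is correct and takes essentially the same route as the cited source.
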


The dualizing sheaf of the curve $\Delta$ is free
(see e.\,g.~\cite[Exercise~3.4(1)]{HarrisMorrison}).
Moreover, the linear system $|K_{\Delta}|$ is base point free by Corollary~\ref{corollary:stability}.
Hence, it gives the canonical morphism
$$
\kappa_\Delta\colon\Delta\to\mathbb{P}^{N},
$$
where $N=h^0(\mathcal{O}_{\Delta}(K_{\Delta}))-1$.
Note that $\kappa_\Delta$ may contract irreducible components of $\Delta$.
If~$\Delta$ is connected, then it is said to be
\begin{itemize}
\item \emph{hyperelliptic} if there is a morphism
$\Delta\to\mathbb{P}^1$ that has degree two
over a general point of~$\mathbb{P}^1$;
\item \emph{trigonal} if there is a  morphism
$\Delta\to \mathbb{P}^1$ that has degree three
over a general point of~$\mathbb{P}^1$;
\item \emph{quasitrigonal} if it is a hyperelliptic curve with two glued smooth points.
\end{itemize}

\begin{remark}
\label{remark:hypertrig}
Suppose that $\Delta$ is connected.
If the curve $\Delta$ is hyperelliptic,
then~\mbox{$\kappa_\Delta(\Delta)$} is a rational normal curve of degree $N$,
and the induced map $\Delta\to\kappa_\Delta(\Delta)$ has degree two
over a general point of $\kappa_\Delta(\Delta)$.
If the curve $\Delta$ is trigonal, then the curve $\kappa_\Delta(\Delta)$ has trisecants,
so that $\kappa_\Delta(\Delta)$ is not an intersection of quadrics.
If $\Delta$ is quasitrigonal, then the intersection of quadrics passing through $\kappa_\Delta(\Delta)$
is a cone over a rational normal curve of degree $N-1$. In particular, in this case $\kappa_\Delta(\Delta)$ is not an intersection
of quadrics as well.
\end{remark}

The main result of \cite{Shokurov} is the following theorem.

\begin{theorem}[{\cite[Main Theorem]{Shokurov}}]
\label{theorem:Shokurov}
In the notation and assumptions of Remark~\ref{remark:Prym}, suppose that $\Gamma$ is connected,
and the following condition holds:
\begin{itemize}
\item[(S)] for any splitting $\Gamma=\Gamma_1\cup\Gamma_2$,
one has $|\Gamma_1\cap\Gamma_2|\geqslant 4$.
\end{itemize}
Then $\mathrm{Prym}(\Gamma^\prime,\iota)$ is a sum of Jacobians of smooth curves if and only if $\Gamma$ is
\begin{itemize}
\item either hyperelliptic, or
\item trigonal, or
\item quasitrigonal, or
\item a plane quintic curve such that
$h^0(\Gamma',\mathcal L)$ is odd, where $\mathcal L$ ia a pull-back of $\left.\mathcal{O}_{\mathbb{P}^2}(1)\right|_{\Gamma}$
under the double cover $\Gamma'\to \Gamma$.
\end{itemize}
\end{theorem}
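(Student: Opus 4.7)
The plan is to split the biconditional into the two directions, each requiring rather different inputs, and then reduce the nodal assertion to a corresponding classification already available for smooth curves by a degeneration argument controlled by the stability hypothesis (S).

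For the ``if'' direction I would exhibit, in each of the four classes, an explicit construction that realizes $\mathrm{Prym}(\Gamma',\iota)$ as a sum of Jacobians of smooth curves. When $\Gamma$ is hyperelliptic, the hyperelliptic involution lifts compatibly to $\Gamma'$ (after possibly adjusting $\iota$ on connected components) and produces quotient curves whose Jacobians build the Prym as a product. When $\Gamma$ is trigonal, I would invoke Recillas's tetragonal/trigonal construction, which assigns to an admissible double cover of a trigonal curve a smooth curve whose Jacobian is precisely $\mathrm{Prym}(\Gamma',\iota)$. The quasitrigonal case is then handled as the admissible nodal limit of the trigonal construction, using that by (S) the degeneration is stable. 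Finally, for a plane quintic $\Gamma$ with $h^0(\Gamma,\mathcal{O}_{\mathbb{P}^2}(1)|_\Gamma)$ odd, the relevant line bundle is an odd theta characteristic, and one shows that the Prym coincides with the Jacobian of a curve built from the classical correspondence between such quintics and double covers of hyperelliptic curves (equivalently, via the intermediate Jacobian of the associated cubic threefold construction used elsewhere in this paper).

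For the hard ``only if'' direction I would fix a one-parameter smoothing $\pi\colon\mathcal{G}\to B$ of $\Gamma$ to smooth curves $\Gamma_b$, $b\neq 0$, together with an extension of the admissible double cover to a family $\mathcal{G}'\to\mathcal{G}$. Here condition (S), which says every splitting of $\Gamma$ meets in at least four nodes, is exactly the Deligne--Mumford stability requirement on the special fibre of the admissible cover, so such a smoothing exists and the relative Prym defines a morphism $B\to\mathcal{A}_g$ to the moduli of principally polarized abelian varieties. Assuming $\mathrm{Prym}(\Gamma',\iota)$ is a sum of Jacobians, I would use the closedness of the Andreotti--Mayer--Beauville stratification of $\mathcal{A}_g$ by singularities of the theta divisor to propagate the property to nearby fibres, showing that the generic Prym $\mathrm{Prym}(\Gamma'_b,\iota_b)$ is already a Jacobian of a smooth curve. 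At that point the smooth-curve classification (Beauville, Shokurov, Masiewicz) applies and forces $\Gamma_b$ to be hyperelliptic, trigonal, or a plane quintic with the stated parity. Specializing back, and noting that the nodal limit of a trigonal family can become either trigonal or quasitrigonal (but nothing else, by the theory of admissible $g^1_3$'s on stable curves), gives the four cases in the statement.

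I expect the main obstacle to be the specialization step in the ``only if'' direction: showing that if the central Prym is a sum of Jacobians then the nearby fibres of the smoothing inherit the same property, rather than acquiring a new decomposition at the boundary. The standard tool is an analysis of the singular locus of the theta divisor together with (S), which prevents the admissible cover from collapsing into lower-dimensional components; Shokurov's singularity bounds on theta divisors of Prym varieties (the key technical engine of his paper) are what upgrade Andreotti--Mayer closedness into the exact list of four classes above. A secondary subtlety is correctly identifying the quasitrigonal case as the unique nodal boundary needed beyond the three smooth-curve cases, which requires checking that no other gluing of a hyperelliptic curve yields a Prym that is a sum of Jacobians under (S).
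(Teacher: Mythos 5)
This statement is not proved in the paper at all: it is quoted verbatim as the Main Theorem of Shokurov's article \emph{Prym varieties: theory and applications}, and the authors use it as a black box. So there is no internal proof to compare with; what can be assessed is whether your sketch would constitute a viable proof of Shokurov's theorem. It would not, because the central step of your ``only if'' direction runs the specialization argument in the wrong direction.

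You propose to smooth $\Gamma$ in a one-parameter family of admissible covers and to ``use the closedness of the Andreotti--Mayer--Beauville stratification to propagate the property to nearby fibres.'' Closedness of the locus of products of Jacobians inside $\mathcal{A}_g$ lets you transfer information from the \emph{generic} fibre to the \emph{special} fibre (this is exactly how Clemens--Griffiths-type irrationality degenerations work), never the reverse: if the central Prym lies in that closed locus, nothing whatsoever forces the nearby Pryms to lie in it, and in general they do not. So the reduction to the smooth-curve classification of Beauville--Shokurov--Mumford collapses at its first step. Shokurov's actual argument does not smooth the curve; it analyzes the singular locus of the theta divisor of $\mathrm{Prym}(\Gamma',\iota)$ directly on the nodal admissible cover, with condition $(\mathrm{S})$ entering to control the dimension of that singular locus and to rule out degenerate splittings, and the four exceptional classes emerge from that case analysis rather than from a limit of the smooth classification. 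Your closing remark that the quasitrigonal case is ``the unique nodal boundary needed beyond the three smooth-curve cases'' is likewise unsupported without that direct analysis. The ``if'' direction of your sketch is essentially sound in outline (Mumford's hyperelliptic decomposition, Recillas' construction for trigonal covers, and the plane-quintic dichotomy governed by the parity of $h^0(\Gamma,\mathcal{O}_{\mathbb{P}^2}(1)|_{\Gamma})$), although the quintic case is tied to the cubic threefold when that parity is \emph{even}, and it is the odd case that yields a Jacobian; your description conflates the two. In any event, given the depth of the result, citing Shokurov as the paper does is the appropriate course.
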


Thus, Theorems~\ref{theorem:Jac-Prym} and \ref{theorem:Shokurov} imply
the following result.

\begin{corollary}
\label{corollary:Shokurov}
Suppose that the curve $\Delta$ is connected and not hyperelliptic,
the curve~\mbox{$\kappa_\Delta(\Delta)$} is an intersection
of quadrics in $\mathbb{P}^N$,
and condition~$(\mathrm{S})$
of Theorem~\ref{theorem:Shokurov} holds for~$\Delta$.
Then $\mathrm{Prym}\left(\Delta^\prime,I\right)$ is not a sum of Jacobians of smooth curves.
\end{corollary}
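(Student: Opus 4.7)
The plan is to apply the contrapositive of Theorem~\ref{theorem:Shokurov}. Since $\Delta$ is connected and condition~$(\mathrm{S})$ holds by hypothesis, it suffices to verify that $\Delta$ does not belong to any of the four special classes of curves listed in that theorem. Hyperellipticity is excluded outright by assumption, so the task reduces to ruling out the trigonal, quasitrigonal, and plane quintic cases.

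For the trigonal case I would invoke Remark~\ref{remark:hypertrig}: the canonical image of a trigonal curve admits a trisecant line $L$, and since any quadric meeting a line in three or more points must contain that line, every quadric through $\kappa_\Delta(\Delta)$ contains $L$. Thus $L$ lies in the intersection of all such quadrics; since $L$ is not a component of $\kappa_\Delta(\Delta)$, this contradicts the hypothesis that the curve equals this intersection. The quasitrigonal case is handled the same way, only more directly: Remark~\ref{remark:hypertrig} identifies the intersection of quadrics through $\kappa_\Delta(\Delta)$ with a two-dimensional cone over a rational normal curve of degree $N-1$, which cannot coincide with the one-dimensional curve $\kappa_\Delta(\Delta)$.

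The main obstacle I anticipate is the plane quintic case, since Remark~\ref{remark:hypertrig} says nothing about it directly. The plan is to show that $\kappa_\Delta(\Delta)$ lies on a Veronese surface that coincides with the intersection of all quadrics through it. By adjunction, the dualizing sheaf of a plane quintic $\Delta \subset \mathbb{P}^2$ satisfies $\omega_\Delta \cong \mathcal{O}_{\mathbb{P}^2}(2)|_\Delta$, so the canonical map factors through the Veronese embedding $v_2\colon \mathbb{P}^2 \hookrightarrow \mathbb{P}^5$. If a quadric $Q \subset \mathbb{P}^5$ passes through $v_2(\Delta)$ but does not contain $v_2(\mathbb{P}^2)$, then the scheme $v_2^{-1}(Q)$ is cut out by a plane quartic vanishing on $\Delta$; by B\'ezout, $\Delta$ would then have to be a component of this quartic, which is impossible since $\deg \Delta = 5 > 4$. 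Hence every quadric through $\kappa_\Delta(\Delta)$ contains $v_2(\mathbb{P}^2)$, and their intersection is exactly the Veronese surface~--- again two-dimensional, hence strictly larger than $\kappa_\Delta(\Delta)$.

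With all four exceptional classes from Theorem~\ref{theorem:Shokurov} ruled out, that theorem gives the desired conclusion that $\mathrm{Prym}(\Delta^\prime, I)$ is not a sum of Jacobians of smooth curves.
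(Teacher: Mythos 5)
Your proposal is correct and follows essentially the same route as the paper: hyperellipticity is excluded by hypothesis, the trigonal and quasitrigonal cases are ruled out via Remark~\ref{remark:hypertrig}, and the plane quintic case is excluded because the quadrics through its canonical image cut out a Veronese surface. You simply supply more detail (the trisecant and B\'ezout arguments) than the paper's two-line proof, but the underlying argument is identical.
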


\begin{proof}
By Remark~\ref{remark:hypertrig} and Theorem~\ref{theorem:Shokurov}, it is enough to show that $\Delta$ is not a plane quintic.
The latter follows from the fact that
quadrics in~$\P^5$ that pass through the canonical ima\-ge~$C$
of a plane nodal quintic
cut out a Veronese surface, so that $C$ is not an intersection of quadrics.
\end{proof}

Clemens and Griffiths proved in \cite[Corollary~3.26]{ClemensGriffiths} that $V$ is irrational provided that $\mathrm{J}(V)$ is not a sum of Jacobians of smooth curves.
Thus, Corollary~\ref{corollary:Shokurov} and Theorem~\ref{theorem:Jac-Prym} imply the following result.

\begin{corollary}
\label{corollary:Shokurov-2}
Suppose that the curve $\Delta$ is connected and not hyperelliptic,
the curve~\mbox{$\kappa_\Delta(\Delta)$} is an intersection
of quadrics in $\mathbb{P}^N$,
and condition~$(\mathrm{S})$
of Theorem~\ref{theorem:Shokurov} holds for~$\Delta$.
Then $V$ is irrational.
\end{corollary}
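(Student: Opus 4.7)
The proof will be a direct assembly of three facts already laid out in this section, so there is essentially no new work to do — the plan is to simply concatenate them in the right order. First, I would invoke Corollary~\ref{corollary:Shokurov}: the hypotheses of Corollary~\ref{corollary:Shokurov-2} are literally the hypotheses of Corollary~\ref{corollary:Shokurov} (connectedness and non-hyperellipticity of $\Delta$, the fact that $\kappa_\Delta(\Delta)$ is cut out by quadrics in $\mathbb{P}^N$, and condition $(\mathrm{S})$ for $\Delta$). So that corollary immediately yields that the Prym variety $\mathrm{Prym}(\Delta^\prime,I)$ is not a sum of Jacobians of smooth curves.

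Next, I would apply Theorem~\ref{theorem:Jac-Prym}, which gives an isomorphism $\mathrm{J}(V)\cong\mathrm{Prym}(\Delta^\prime,I)$ as principally polarized abelian varieties. Since the property of being (or failing to be) a sum of Jacobians of smooth curves is invariant under isomorphism of principally polarized abelian varieties, we conclude that $\mathrm{J}(V)$ itself is not a sum of Jacobians of smooth curves.

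Finally, I would quote the Clemens--Griffiths criterion \cite[Corollary~3.26]{ClemensGriffiths}, which is explicitly recalled in the sentence immediately preceding the statement: if the intermediate Jacobian $\mathrm{J}(V)$ of a smooth projective threefold $V$ is not isomorphic (as a principally polarized abelian variety) to a sum of Jacobians of smooth curves, then $V$ is irrational. Applying this to our $V$ finishes the proof.

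There is really no serious obstacle here. The only subtle point worth flagging is that it is essential that the isomorphism in Theorem~\ref{theorem:Jac-Prym} be an isomorphism \emph{of principally polarized abelian varieties}, since the Clemens--Griffiths criterion is sensitive to the polarization, not just the underlying abelian variety; but this is precisely what Theorem~\ref{theorem:Jac-Prym} supplies. Accordingly the proof can be written in two or three sentences, simply citing Corollary~\ref{corollary:Shokurov}, Theorem~\ref{theorem:Jac-Prym}, and \cite[Corollary~3.26]{ClemensGriffiths}.
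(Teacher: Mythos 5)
Your proposal is correct and is exactly the argument the paper gives: the statement is deduced by combining Corollary~\ref{corollary:Shokurov} with Theorem~\ref{theorem:Jac-Prym} and then invoking the Clemens--Griffiths criterion \cite[Corollary~3.26]{ClemensGriffiths}. Your remark about the isomorphism being one of principally polarized abelian varieties is a sensible point to flag, and it is indeed what Theorem~\ref{theorem:Jac-Prym} provides.
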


Corollary~\ref{corollary:Shokurov-2} and
Theorem~\ref{theorem:Shokurov} imply Conjecture~\ref{conjecture:Shokurov} for $U=\mathbb{F}_n$.
For details, see the proof of \cite[Theorem 10.2]{Shokurov}.

\begin{remark}
\label{remark:Shokurov}
In the notation and assumptions of Remark~\ref{remark:Prym}, suppose that there is a splitting
$$
\Gamma=E_1\cup\ldots\cup E_r\cup\Phi
$$
such that each $E_i$ is a smooth rational curve, the curves $E_1,\ldots,E_r$ are disjoint,
and each intersection $E_i\cap\Phi$ consists of two points.
Let $\Theta$ be a nodal curve obtained from $\Phi$ by gluing each pair of points $E_i\cap\Phi$.
It follows from \cite[Corollary~3.16]{Shokurov} and \cite[Remark~3.17]{Shokurov}
that there exists a connected nodal curve $\Theta^\prime$ together with an involution $\sigma$ on it
such that
$$\Theta^\prime/\sigma\cong\Theta,$$
the nodes of $\Theta^\prime$ are exactly the fixed points of $\sigma$,
the involution $\sigma$ does not interchange branches at these points, and
$$
\mathrm{Prym}\left(\Gamma^\prime,\iota\right)\cong\mathrm{Prym}\left(\Theta^\prime,\sigma\right).
$$
\end{remark}

\section{Quartic double solids and conic bundles}
\label{section:double-solids}

Let $\tau\colon X\to\mathbb{P}^3$ be a double
cover branched over a nodal quartic surface in $S$.
Suppose that $S$ is indeed singular,
and let $O_S$ be a singular point of the surface $S$.
Denote by $O_X$ the point in $X$ that is mapped to the point ${O}_S$ by the double cover $\tau$.
Then there exists a commutative diagram
$$
\xymatrix{
&&X\ar@{->}[rr]^{\tau}\ar@{-->}[ddrr]^{p_{O_X}}&&\mathbb{P}^3\ar@{-->}[dd]^{p_{{O}_S}}&&S\ar@{_{(}->}[ll]\ar@{-->}[ddll]^{p_{{O}_S}\vert_{S}}\\
{X_0}\ar@{->}[rru]^{f_{O_X}}\ar@{->}[drrrr]_{\pi}&&&&&&\\
&&&&\mathbb{P}^{2}&&}
$$
where $p_{O_S}$ is the linear projection from the point ${O}_S$,
the morphism $f_{O_X}$ is the blow up of the point $O_X$, the map
$p_{O_X}$ is undefined only in the point $O_X$, and $\pi$ is a conic bundle.
One has
$$
\left(\tau\circ f_{O_X}\right)^*\mathcal{O}_{\mathbb{P}^3}(1)-E_{O_X}\sim\pi^*\mathcal{O}_{\mathbb{P}^2}(1),
$$
where $E_{O_X}\cong\mathbb{P}^1\times\mathbb{P}^1$ is the exceptional surface of $f_{O_X}$.

\begin{remark}
\label{remark:X-blow-up}
The divisor $-K_{X_0}$ is ample and $-K_{X_0}^3=14$.
If $O_X$ is the only singular point of the surface $S$,
then $X_0$ is the Fano threefold No.~8
in the notation of~\cite[\S12.3]{IsPr99}.
\end{remark}

The restricted map
$$p_{O_S}\vert_{S}\colon S\dasharrow \mathbb{P}^2$$
is a generically two-to-one cover,
and its branch locus is a curve of degree~$6$,
which is also the degeneration curve of the conic bundle~$\pi$.
Denote this curve by~$C$.
The scheme fibers of $\pi$ over the points of the curve
$C$ are singular conics in~$\mathbb{P}^2$.
Note that the scheme fiber of $\pi$
over a point $\xi\in\mathbb{P}^2$ is non-reduced if and only if
the line $L_{\xi}$ mapped to $\xi$ by $p_{O_X}$ is contained in the quartic
$S$; in this case one obviously has~\mbox{$\xi\in C$}.

\begin{proposition}
\label{proposition:C6}
The singularities of the curve $C$ (if any) are nodes, cusps, or tacnodes.
Moreover, let $\xi$ be a point in $C$, and let $L_{\xi}$ be a line in $\mathbb{P}^3$ that is mapped to $\xi$ by the linear projection $p_{O_X}$.
Then the following assertions hold.
\begin{itemize}
\item[(i)] The curve $C$ has a tacnode at $\xi$ if and only if $L_{\xi}\subset S$, and
there are exactly two singular points of $S$ different from $O_S$ that are contained in $L_{\xi}$.

\item[(ii)] The curve $C$ has a cusp at $\xi$ if and only if $L_{\xi}\subset S$, and
there is a unique singular point of $S$ different from $O_S$ that is contained in $L_{\xi}$.

\item[(iii)] The curve $C$ has a node at $\xi$ if and only if one of the following two cases holds:
\begin{itemize}
\item[$\bullet$] $L_{\xi}\not\subset S$, and there is a unique singular point of $S$ different from $O_S$ that is contained in $L_{\xi}$;
\item[$\bullet$] $L_{\xi}\subset S$, and $O_S$ is the unique singular point of $S$ that is contained in $L_{\xi}$.
\end{itemize}
\end{itemize}
\end{proposition}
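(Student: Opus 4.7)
The plan is to work locally around $\xi$ with an explicit equation for $C$, and then to read off the singularity type by inspecting the leading terms of this equation. Fix homogeneous coordinates $[x:y:z:t]$ on $\mathbb{P}^3$ with $O_S=[1:0:0:0]$ and write
$$
f = f_2 x^2 + f_3 x + f_4,
$$
where $f_i \in \mathbb{C}[y,z,t]$ is homogeneous of degree $i$ and $f_2$ is non-degenerate because $O_S$ is a node. Restricting $f$ to the line $L_\xi$ and removing the double zero at $O_S$, the branch curve $C\subset\mathbb{P}^2$ of $p_{O_S}|_S$ is cut out by $f_3^2-4 f_2 f_4=0$. Taking $\xi=[1:0:0]$ with local coordinates $u=z/y$, $v=t/y$ and setting $F_i(u,v):=f_i(1,u,v)$, the local equation of $C$ at $\xi$ is
$$
G(u,v)=F_3(u,v)^2-4F_2(u,v)F_4(u,v).
$$
Now I translate the hypotheses: the containment $L_\xi\subset S$ is equivalent to $F_2(0)=F_3(0)=F_4(0)=0$, while a direct computation of $\nabla f$ shows that a point $[x_0:1:0:0]$ of $L_\xi$ is a singular point of $S$ if and only if $x_0$ is a common root of the two quadratics
$$
p(x)=a_2 x^2+a_3 x+a_4, \qquad q(x)=b_2 x^2+b_3 x+b_4,
$$
where $a_i:=F_{i,u}(0)$ and $b_i:=F_{i,v}(0)$.

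The proof reduces to an inspection of the leading terms of $G$. When $L_\xi\not\subset S$ and $\xi\in C$, a short case check using $G(\xi)=0$ forces $F_2(\xi)\neq 0$, and then the residual intersection of $L_\xi$ with $S$ outside $O_S$ is a length-$2$ subscheme supported at a single point $P=[x_0:1:0:0]$ with $x_0=-F_3(\xi)/(2F_2(\xi))$. Computing $\nabla G(0)$ directly, it vanishes precisely when $P$ is a singular point of $S$; in that case the Hessian of $G$ at the origin is non-degenerate because the tangent cone of $S$ at the node $P$ is non-degenerate, giving a node of $C$. When $L_\xi\subset S$, the function $G$ vanishes to order at least $2$ at the origin with quadratic part
$$
G^{(2)}(u,v)=(a_3 u+b_3 v)^2-4(a_2 u+b_2 v)(a_4 u+b_4 v),
$$
which is the $x$-discriminant of the pencil $u\,p(x)+v\,q(x)$. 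A short algebraic manipulation gives the key identity that, up to a nonzero constant, the binary discriminant of $G^{(2)}$ equals the classical resultant of $p$ and $q$. Consequently: if $p,q$ share no root, then $G^{(2)}$ is non-degenerate and $\xi$ is a node of $C$; if $p,q$ share exactly one root, then $G^{(2)}=c\ell^2$ is a nonzero perfect square in some linear form $\ell$; and if $p,q$ are proportional, then both $G^{(2)}$ and $G^{(3)}$ are divisible by a common linear form $\ell$.

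The remaining obstacle is to show that the singularities in the last two sub-cases are \emph{exactly} of type $A_2$ and $A_3$, respectively, rather than worse. For the cusp case, I would verify $G^{(3)}\not\equiv 0\pmod{\ell}$: a direct calculation identifies this non-vanishing with the non-degeneracy of the tangent cone of $S$ at the extra node, which holds because that point is an ordinary double point of $S$. For the tacnode case, after a Tschirnhaus transformation eliminating $G^{(3)}$, I would show that the resulting quartic term of $G$ is not divisible by $\ell^2$, using the assumption that the two extra nodes of $S$ on $L_\xi$ are \emph{distinct} ordinary double points; this forces the relevant quartic coefficient to be nonzero, producing an $A_3$ tacnode rather than a worse ($A_4$ or higher) singularity. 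I expect this last verification — disentangling the tacnode from worse singularities via the distinctness of the two extra nodes — to be the main technical step in the proof.
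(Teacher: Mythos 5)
Your setup coincides with the paper's: the local equation of $C$ is the discriminant $F_3^2-4F_2F_4$, and the singularity type is read off from its leading terms. Your reorganization of the case $L_\xi\subset S$ via the pencil $u\,p(x)+v\,q(x)$ and the identity relating $\mathrm{disc}\,G^{(2)}$ to $\mathrm{Res}(p,q)$ is an attractive way to get the node/non-node dichotomy. But the argument as written has genuine gaps, and they sit exactly where the content of the proposition lies. The two decisive verifications --- that in the one-extra-node case $G^{(3)}\not\equiv 0\pmod{\ell}$ (so the singularity is $A_2$ and not worse), and that in the two-extra-node case the quartic term survives (so it is $A_3$ and not worse) --- are deferred (``I would verify'', ``I would show''). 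In the paper these are carried out explicitly: after normalization the weighted leading part in the tacnode case is $\alpha^2x^2+(2\alpha\epsilon-4\beta)xy^2+(\epsilon^2-4\beta)y^4$, and its failure to be a square is equivalent to $\beta\neq 0$ \emph{and} $\alpha^2-\alpha\epsilon+\beta\neq 0$, the two conditions coming separately from the nodality of the two extra singular points $P_S$ and $Q_S$. Moreover, your stated criterion for the tacnode is too weak: after the Tschirnhaus substitution you need the new quartic part to be nonzero \emph{modulo} $\ell$ (i.e., the coefficient of the fourth power of the complementary coordinate must not vanish); ``not divisible by $\ell^2$'' still permits a quartic part of the form $\ell\cdot(\text{cubic})$, which yields two smooth branches with contact of order $3$, an $A_5$ or worse.

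There are also gaps in the case split itself. The claim that $\xi\in C$ and $L_\xi\not\subset S$ force $F_2(\xi)\neq 0$ is false: when $L_\xi$ meets $S$ only at $O_S$ (with multiplicity $4$ there) one has $F_2(\xi)=F_3(\xi)=0$ and $F_4(\xi)\neq 0$, so $\xi\in C$ while the residual intersection is supported at $O_S$. This sub-case must be treated separately --- the paper shows $C$ is smooth at $\xi$ there using the non-degeneracy of the node $O_S$ --- and it is needed for the ``only if'' direction of (iii). Likewise, the dictionary between the number of extra nodes of $S$ on $L_\xi$ and the configuration of the pair $(p,q)$ has untreated degenerations: a common root ``at infinity'' ($a_2=b_2=0$, which is compatible with $f_2$ being non-degenerate, e.g.\ $f_2=y^2+zt$), and proportional $p,q$ with a double root. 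Until these are excluded or analyzed, the three-way split node/cusp/tacnode is not established.
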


\begin{proof}
Choose homogeneous coordinates $x$, $y$, $z$, and $t$ in $\mathbb{P}^3$ so that
$O_S=[0:0:1:0]$. Then the quartic $S$ is given by equation
\begin{equation}\label{eq:quartic}
z^2q_2(x,y,t)+zq_3(x,y,t)+q_4(x,y,t)=0,
\end{equation}
where $q_i$ is a form of degree $i$.
One has
$$p_{O_S}([x:y:z:t])=[x:y:t].$$
Moreover, after a change of coordinates
$x$, $y$ and $t$ we may assume that the line $L_{\xi}$ is given by
equations $x=y=0$. The equation of the curve $C$ in a local
chart $\mathbb{A}^2\subset\mathbb{P}^2$
with coordinates $x$ and $y$ is written as $F(x,y)=0$, where
\begin{equation}\label{eq:degeneracy}
F(x,y)=q_3(x,y,1)^2-4q_2(x,y,1)q_4(x,y,1).
\end{equation}
Our further strategy is to consider several cases depending on the position of the line
$L_{\xi}$ with respect to the surface $S$, the number of singular
points of $S$ on $L_{\xi}$, and vanishing of certain coefficients in the equation of $S$,
and in each case to figure out a description of a singularity of the curve
$C$ at $\xi$.

\emph{Case I}.
Assume that the line $L_{\xi}$ is not contained in $S$, i.\,e. at least
one of the forms $q_i$ in~\eqref{eq:quartic}
contains a monomial~$t^i$ with non-zero coefficient.
We are going to show that $C$ is either smooth at $\xi$, or the situation
is described by the first option of case~(iii) in the assertion of the proposition.
Since $\xi$ is contained in $C$, we see that
$L_{\xi}$ intersects $S$ at $O_S$ and at most one more point.

\emph{Subcase I.1}.
Suppose that $O_S$ is the only common point of $L_{\xi}$ and $S$.
We are going to show that $C$ is smooth at $\xi$. One has
$q_4(0,0,1)\neq 0$ by~\eqref{eq:quartic}. Keeping this in mind
and looking at~\eqref{eq:quartic} once again, we see that
$$
q_2(0,0,1)=q_3(0,0,1)=0.
$$
Since $O_S$ is a node, we also see that
at least one of the partial derivatives
of $q_2$ with respect to $x$ and $y$ does not vanish at the
point $[0:0:1]$. We may assume that this is a partial derivative
with respect to $x$. Then \eqref{eq:degeneracy} implies that the partial derivative of $F$ with
respect to $x$ at the point $(0,0)$ does not vanish either,
so that $C$ is smooth at the point~$\xi$ in Subcase~I.1.

\emph{Subcase I.2}.
Suppose that the intersection $L_{\xi}\cap S$ contains a point $P_S$
different from~$O_S$.
Making a change of coordinates if necessary,
we may assume that
$$P_S=[0:0:0:1].$$
Then $q_4(0,0,1)=0$ by~\eqref{eq:quartic} and thus
$q_3(0,0,1)=0$ by~\eqref{eq:degeneracy}.
This implies~\mbox{$q_2(0,0,1)\neq 0$},
so that we may assume that $q_2(0,0,1)=1$.

\emph{Sub-subcase I.2(a)}.
Suppose that $P_S$ is a non-singular point of $S$.
We are going to show that $C$ is smooth at $\xi$.
At least one of the partial derivatives
of $q_4$ with respect to $x$ and $y$ does not vanish at the
point $[0:0:1]$. As above, we may assume that this is a partial derivative
with respect to $x$. Then \eqref{eq:degeneracy} implies that the partial derivative of $F$ with
respect to $x$ at the point $(0,0)$ does not vanish either,
so that $C$ is smooth at~$\xi$ in Sub-subcase I.2(a).

\emph{Sub-subcase I.2(b)}.
Suppose that $P_S$ is a singular point of $S$.
We are going to show that the situation
is described by the first option of case~(iii) in the assertion of the proposition.
None of the monomials of $q_4$ is divisible by $t^3$.
Write
$$q_3(x,y,t)=2l(x,y)t^2+\bar{q}_3(x,y,t)$$
and
$$q_4(x,y,t)=q(x,y)t^2+\bar{q}_4(x,y,t),$$
where $l$ is a linear form in $x$ and $y$,
$q$ is a quadratic form in $x$ and $y$, every monomial of $\bar{q}_3$
has degree at least $2$ in $x$, $y$,
while every monomial of $\bar{q}_4$
has degree at least~$3$ in $x$, $y$.
Regarding $x$, $y$, and $z$ as affine coordinates
in a neighborhood of the point~\mbox{$P_S=[0:0:0:1]$}, we can write
a local equation of (an affine chart on) $S$ in these coordinates
as
$$
g(x,y,z)+\Phi_{\geqslant 3}(x,y,z)=0,
$$
where
$$g(x,y,z)=z^2+2l(x,y)z+q(x,y),$$
and every monomial of $\Phi_{\geqslant 3}$ has degree at least~$3$.
Using the fact that $P_S$ is a node of~$S$, we conclude that
the quadratic form $g(x,y,z)$
is non-degenerate.
This means that~\mbox{$q(x,y)-l(x,y)^2$} is not a square.
On the other hand, we rewrite \eqref{eq:degeneracy} as
$$
F(x,y)=4l(x,y)^2-4q(x,y)+F_{\geqslant 3}(x,y),
$$
where any monomial of $F_{\geqslant 3}$ has degree at least $3$.
This implies that the curve $C$ has a node at~$\xi$ by definition of a nodal point,
so that in Sub-subcase~I.2(b)
we have the first option of case~(iii) of the assertion of the proposition.
Up to now we have completed Subcase~I.2, and the whole
Case~I.

\emph{Case II}.
Now assume that the line $L_{\xi}$ is contained in the quartic $S$.
We are going to show that the situation
is described either by case~(i), or by case~(ii), or by the second option of case~(iii) of the assertion of the proposition.
Our assumption implies that neither of the forms
$q_i$ in \eqref{eq:quartic} contains a monomial $t^i$ with non-zero coefficient.

\emph{Subcase II.1}.
Suppose that $L_{\xi}$ does not contain singular points of $S$ that
are different from $O_S$.
We are going to show that the situation
is described by the second option of case~(iii) in the assertion of the proposition.
Write
$$
q_i(x,y,t)=l_i(x,y)t^{i-1}+r_i(x,y,t)
$$
for $i=2,3,4$,
where $l_i$ are linear forms in $x$ and $y$, while
every monomial of $r_i$ has degree at least $2$ in $x,y$.
Put
$$
f(x,y)=l_3(x,y)^2-4l_2(x,y)l_4(x,y).
$$

We claim that $f(x,y)$ is not a square.
Indeed, suppose that $f(x,y)$ is a square of some linear form.
Note that $l_2(x,y)$ is not a zero polynomial since
$O_S$ is a node of $S$.
Since all partial derivatives of $r_i(x,y,t)$ vanish on the line $L_{\xi}$,
we see that $S$ has some singular point different from $O_S$
on $L_{\xi}$ provided that there is such point for
a cubic surface given by equation
$$
z^2l_2(x,y)+ztl_3(x,y)+t^2l_4(x,y)=0.
$$
The latter equation in $z$ and $t$ has two roots $\upsilon_1$ and $\upsilon_2$ over the field $\mathbb{C}(x,y)$
because its discriminant is a square, and each $\upsilon_i$ can be written as a quotient
of two linear forms in $x$ and $y$. However, both the sum and the product of $\upsilon_i$
are also quotients of linear functions in $x$ and $y$, which implies that
at least one of $\upsilon_i$, say $\upsilon_1$, is an element of~$\mathbb{C}$.
Thus the above cubic surface is singular at
the point $R=(0:0:\upsilon_1:1)$; actually, in this case the cubic surface is reducible.
Hence $S$ is also singular at the point $R$, which contradicts our current assumptions.

We see that $f(x,y)$ is not a square. Thus it is
a non-degenerate quadratic form in $x$ and~$y$. On the other hand, we
can rewrite~\eqref{eq:degeneracy} as
$$
F(x,y)=f(x,y)+F_{\geqslant 3}(x,y),
$$
where any monomial of $F_{\geqslant 3}$ has degree at least~$3$.
Therefore,
the curve $C$ has a node at~$\xi$ by definition of a nodal point, so that
in Subcase~II.1 we have the second option of case~(iii) of the assertion of the proposition.

\emph{Subcase II.2}.
Suppose that $L_{\xi}$ contains a singular point $P_S$ of $S$ such that
$P_S$ is different from $O_S$.
We are going to show that the situation
is described eithert by case~(i), or by case~(ii) in the assertion of the proposition.
Making a linear change of coordinates $z$ and~$t$, we may assume that
$P_S=[0:0:0:1]$ and we still have $O_S=[0:0:1:0]$. Since
$x$, $y$, and $t$ can be regarded as local affine coordinates in a neighborhood of $O_S$, and
the quadratic form $q_2$ is a non-degenerate form in $x$, $y$, and $t$,
we can make a further linear change of
variables $x$, $y$, and $t$, and assume that
\begin{equation}\label{eq:q2}
q_2=xt+y^2.
\end{equation}
Note that the coordinates of $O_S$ and $P_S$ remain unchanged under such coordinate
change.
Since $P_S$ is a singular point of $S$, the form $q_4$ does not
contain any of the monomials~$xt^3$ or~$yt^3$ with non-zero coefficient.
Recall also that the form $q_3$ does not
contain the monomial~$t^3$ with non-zero coefficient.
This implies that the form
$q_3$ contains at least one of the monomials~$xt^2$ or~$yt^2$
with non-zero coefficient. Indeed,
otherwise we can write
a local equation of (an affine chart on) $S$
as
$$
q_4(x,y,1)+\Phi_{\geqslant 3}(x,y,z)=0
$$
regarding $x$, $y$, and $z$ as affine coordinates
in a neighborhood of the singular point~\mbox{$P_S=[0:0:0:1]$},
where every monomial of $\Phi_{\geqslant 3}$ has degree at least $3$.
This means that the quadratic part of the latter equation
depends only on $x$ and $y$. This is impossible since
$P_S$ is a node of~$S$.

\emph{Sub-subcase II.2(a)}.
Suppose that $q_3$ contains the monomial $yt^2$ with
non-zero coefficient.
We are going to show that the curve $C$ has a cusp at $\xi$, so that
we are in case~(ii) of the assertion of the proposition.
It is easy to see from equation~\eqref{eq:quartic}
that~$O_S$ and~$P_S$ are the only singular points of $S$
contained in~$L_{\xi}$.

We can write
$$
q_3(x,y,t)=\tilde{l}(x,y)t^2+\tilde{q}_3(x,y,t),
$$
where $\tilde{l}$ is a linear form in $x$ and $y$ not proportional to $x$,
while every monomial of $\tilde{q}_3$ has degree at least $2$ in $x,y$.
For some linear form $l$ one has
$$
y=l(x,\tilde{l}(x,y)).
$$
Replacing the coordinate $y$ by $\tilde{l}(x,y)$
(and keeping the notation $y$ for the latter coordinate for simplicity),
we rewrite
$$
q_2(x,y,t)=xt+l(x,y)^2,\quad
q_3(x,y,t)=yt^2+\bar{q}_3(x,y,t),
$$
and
$$
q_4(x,y,t)=\alpha x^2t^2+\beta xyt^2+\gamma y^2t^2+\bar{q}_4(x,y,t),
$$
where $l$ is a linear form in $x$ and $y$,
every monomial of $\bar{q}_3$ has degree at least $2$ in $x,y$,
and every monomial of $\bar{q}_4$ has degree at least $3$ in $x,y$.
Regarding $x$, $y$, and $z$ as affine coordinates
in a neighborhood of the point $P_S=[0:0:0:1]$, we can write
a local equation of (an affine chart on) $S$ in these coordinates
as
$$
\alpha x^2+\beta xy+\gamma y^2+yz+\Phi_{\geqslant 3}(x,y,z)=0,
$$
where every monomial of $\Phi_{\geqslant 3}$ has degree at least $3$.
Since $P_S$ is a node of $S$, we have~\mbox{$\alpha\neq 0$}.
Assigning the weights $\wt(y)=3$ and $\wt(x)=2$,
we rewrite \eqref{eq:degeneracy} as
$$
F(x,y)=y^2-4\alpha x^3+F_{\geqslant 7}(x,y),
$$
where any monomial of $F_{\geqslant 7}$ has weight at least $7$.
Thus, the curve $C$ has a cusp
at the point~$\xi$ in Sub-subcase II.2(a);
this follows either from an explicit analytic change of coordinates, or from
a sufficient condition for a curve to have a cusp, see \cite[Theorem~II.13.2]{AVGZ}.

\emph{Sub-subcase II.2(b)}.
Finally, suppose that $q_3$ does not contain the monomial $yt^2$ with
non-zero coefficient.
We are going to show that the curve $C$ has a tacnode at $\xi$, so that
we are in case~(i) of the assertion of the proposition.
It is easy to see from equation~\eqref{eq:quartic}
that there is a unique singular point $Q_S$ of~$S$
different from $O_S$ and $P_S$ that is contained in $L_{\xi}$.
Since
$q_3$ does not contain the monomial $yt^2$ with
non-zero coefficient, we conclude that it contains
the monomial $xt^2$ with
some non-zero coefficient $\alpha$.
All other monomials in $q_3(x,y,t)$ have degree at least $2$ in $x$ and $y$.
Let $\epsilon$ be the coefficient at $y^2t$ in $q_3(x,y,t)$;
note that we do not claim that $\epsilon\neq 0$ here.
Thus we may write
\begin{equation}\label{eq:q3}
q_3(x,y,t)=\alpha xt^2+\epsilon y^2t+\bar{q}_3(x,y,t),
\end{equation}
where every monomial of $\bar{q}_3$
has degree at least $2$ in $x$, $y$, and is different from~$y^2t$.
Taking partial derivatives, it is easy to see
from equations~\eqref{eq:quartic} and~\eqref{eq:q3}
that the unique singular point of~$S$
different from $O_S$ and~$P_S$ that is contained in $L_{\xi}$
is
$$Q_S=[0:0:-\alpha:1].$$
Write
\begin{equation}\label{eq:q4}
q_4(x,y,t)=\beta y^2t^2+\gamma xyt^2+\delta x^2t^2+\bar{q}_4(x,y,t),
\end{equation}
where every monomial of $\bar{q}_4$
has degree at least $3$ in $x$, $y$.
Regarding $x$, $y$, and $z$ as affine coordinates
in a neighborhood of the point $P_S=[0:0:0:1]$, we can write
a local equation of (an affine chart on) $S$ in these coordinates
as
$$
\alpha xz+\beta y^2+\gamma xy+\delta x^2+\Phi_{\geqslant 3}(x,y,z)=0,
$$
where every monomial of $\Phi_{\geqslant 3}$ has degree at least $3$.
Using once again the fact that $P_S$ is a node of $S$,
we see that $\beta\neq 0$.
Choosing a new coordinate $z'=z+\alpha t$,
we rewrite~\eqref{eq:quartic} as
$$
\left(\alpha^2-\alpha\epsilon+\beta\right)y^2t^2+\Upsilon(x,y,z',t)=0,
$$
where every monomial of $\Upsilon$ either is divisible by $x$ or has degree at least $3$ in $x$, $y$ and $z'$.
Hence the fact that $Q_S$ is a node of $S$ implies that
$$
\alpha^2-\alpha\epsilon+\beta\neq 0.
$$
On the other hand, assigning the weights $\wt(x)=2$ and $\wt(y)=1$,
and using equations~\eqref{eq:q2}, \eqref{eq:q3} and~\eqref{eq:q4},
we rewrite~\eqref{eq:degeneracy} as
$$F(x,y)=F_4(x,y)+F_{\geqslant 5}(x,y),$$
where
$$
F_4(x,y)=\alpha^2 x^2+(2\alpha\epsilon-4\beta) xy^2+(\epsilon^2-4\beta) y^4,
$$
and every monomial of $F_{\geqslant 5}$ has weight at least $5$.
It is straightforward to check that the
polynomial~$F_4$ is not a square.
This means that the curve $C$ has a tacnode
at~$\xi$ in Sub-subcase II.2(b);
this follows either from an explicit analytic change of coordinates, or from
a sufficient condition for a curve to have a tacnode, see \cite[Theorem~II.13.2]{AVGZ}.
Thus we have completed Subcase~II.2, Case~II, and the proof of the proposition.
\end{proof}

\begin{remark}
\label{remark:Sarkisov-model}
If $O_S$ is the only singular point of the surface $S$, then Proposition~\ref{proposition:C6} follows
from a much more general~\cite[Corollary~1.11]{Sarkisov2}.
\end{remark}

\begin{lemma}\label{lemma:puchok}
In the notation of Proposition~\ref{proposition:C6}, suppose that the curve $C$ has a tacnode at the point $\xi$.
Let $F_\xi$ be the preimage of the point $\xi$ with respect to $\pi$.
Let $T$ be the line in $\P^2$ that passes through $\xi$ and has a local intersection number $4$ with the curve $C$ at $\xi$.
Denote by $\mathcal{B}$ the linear system  of conics in $\P^2$ that are tangent to $T$ at $\xi$.
Let $B_1$ and $B_2$ be preimages on $X_0$ of two general conics in $\mathcal{B}$.
Then each $B_i$ has a singularity locally
isomorphic to a product of a node and $\mathbb{A}^1$ at a general point of $F_\xi$,
and
$$
\mathrm{mult}_{F_\xi}(B_1\cdot B_2)=4.
$$
\end{lemma}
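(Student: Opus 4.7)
My plan is to carry out an explicit local computation in a chart of $X_0$ adapted to a generic point of $F_\xi$, and then to read off both assertions from the splitting lemma and a Hensel-type factorization.

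I would keep the coordinates from the proof of Proposition~\ref{proposition:C6}: in the chart $z=1$ of $\mathbb{P}^3$ the surface $S$ is cut out by~\eqref{eq:quartic} with $q_i$ as in~\eqref{eq:q2}--\eqref{eq:q4}, and locally $X$ is $w^2=q_2+q_3+q_4$. From the expansion $F=F_4+F_{\geqslant 5}$ obtained there the tangent cone of $C$ at $\xi$ is the double line $\{x=0\}$, forcing $T=\{x=0\}$; accordingly a general conic $\bar B_i\in\mathcal{B}$ has local form $x=c_iy^2+O(y^3)$ at $\xi$ with generic $c_i\in\mathbb{C}$. The blow-up chart $x=tx_1$, $y=ty_1$, $w=tw_1$ turns the equation of $X$ into
$$w_1^2=x_1+y_1^2+t\tilde q_3(x_1,y_1)+t^2\tilde q_4(x_1,y_1),\qquad \tilde q_i(x_1,y_1)=q_i(x_1,y_1,1),$$
from which the implicit function theorem supplies smooth coordinates $(y_1,w_1,t)$ on $X_0$ with $x_1$ an implicit analytic function; in these coordinates $\pi$ is $(y_1,w_1,t)\mapsto(x_1,y_1)$, and $(F_\xi)_{\mathrm{red}}$ is the $t$-axis (with $F_\xi$ doubled scheme-theoretically in the $w_1$-direction).

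Fixing a generic $p=(0,0,t_0)\in F_\xi$, setting $s=t-t_0$, and iterating the defining equation with the explicit forms \eqref{eq:q2}--\eqref{eq:q4}, I would obtain an expansion
$$x_1(y_1,w_1,s)=\lambda w_1^2-\lambda\mu\, y_1^2+(\text{terms of order}\geqslant 3\text{ in }y_1,w_1,s),$$
with $\lambda=(1+t_0\alpha)^{-1}$ and $\mu=1+t_0\epsilon+t_0^2\beta$, together with the identity $x_1(0,0,s)\equiv 0$ (immediate from $\tilde q_3(0,0)=\tilde q_4(0,0)=0$ and the fact that the coefficient of $x_1$ on the right is a unit). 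Consequently the local equation of $B_i$ at $p$ is
$$E_i=\lambda w_1^2-(\lambda\mu+c_i)\,y_1^2+(\text{order}\geqslant 3)=0,\qquad E_i(0,0,s)\equiv 0,$$
so $F_\xi\subset B_i$ and $B_i$ is singular along $F_\xi$. For generic $t_0$ and $c_i$ the leading quadratic form has rank $2$ in $(y_1,w_1)$ and factors into two distinct linear forms over $\mathbb{C}$; applying the splitting lemma in these two variables, and using $E_i(0,0,s)\equiv 0$ to rule out any nontrivial one-variable remainder in the $s$-direction, I would identify $B_i$ analytically at $p$ with the product of a node in $(y_1,w_1)$ and the $s$-line, as claimed.

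For the intersection multiplicity I would localize at the generic point of $F_\xi$: the ring $R=\mathcal{O}_{X_0,F_\xi}$ is a two-dimensional regular local ring with maximal ideal $(y_1,w_1)$. The initial form of each $E_i$ is $\lambda w_1^2-(\lambda\mu+c_i)y_1^2$, which splits into two distinct linear factors; by Hensel's lemma $E_i=u_i\tilde L_{i,1}\tilde L_{i,2}$ for a unit $u_i\in R^\times$ and series $\tilde L_{i,j}$ with those linear parts. For generic $c_1\neq c_2$ the four linear initial forms are pairwise distinct, and additivity of intersection multiplicities on a regular surface yields
$$\mathrm{mult}_{F_\xi}(B_1\cdot B_2)=\mathrm{length}_R\,R/(E_1,E_2)=\sum_{j,k=1}^{2}i(\tilde L_{1,j},\tilde L_{2,k})=4.$$
The main technical obstacle is the bookkeeping in the expansion of $x_1$: one must track cubic and higher terms carefully enough to confirm simultaneously that the leading quadratic part is a rank-$2$ form depending only on $(y_1,w_1)$ and that $x_1$ vanishes identically along the $s$-axis. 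Once this is in place, the splitting lemma and the Hensel factorization deliver both assertions routinely.
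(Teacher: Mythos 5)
Your proposal is correct and follows essentially the same route as the paper: both reduce to the local normal form of Proposition~\ref{proposition:C6} (equations \eqref{eq:q2}--\eqref{eq:q4}), choose coordinates on $X_0$ near a general point of $F_\xi$ in which $F_\xi$ is a coordinate axis, and show that $B_i$ is given by $w^2-\mu_i y^2+(\text{h.o.t.})=0$ with the $\mu_i$ nonzero and distinct for general conics, from which both the singularity type and $\mathrm{mult}_{F_\xi}(B_1\cdot B_2)=2\cdot 2=4$ follow. The only (immaterial) difference is that you work in the blow-up chart $x=tx_1$, $y=ty_1$, $w=tw_1$ and solve for $x_1$ implicitly, whereas the paper computes directly in the chart $z=1$ with local coordinates $w,y,t$ on $X_0$.
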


\begin{proof}
Using coordinates in $\P^3$ and $\P^2$ introduced in the proof of Proposition~\ref{proposition:C6},
we find that the line $T$ is given by equation $x=0$.
Regarding $x$ and $y$ as local coordinates in an affine chart containing $\xi$,
and making an analytic change of coordinates if  necessary,
we write an equation of a general conic in $\mathcal{B}$ as
$$
x-\lambda y^2=0,
$$
where $\lambda\in\mathbb{C}$.

Keeping in mind equation~\eqref{eq:quartic},
we write down the local equation of $X$ (and also of $X_0$ at a general point of $F_\xi$) in $\mathbb{A}^4$ as
$$
w^2=q_2(x,y,t)+q_3(x,y,t)+q_4(x,y,t).
$$
Using equations~\eqref{eq:q2}, \eqref{eq:q3} and~\eqref{eq:q4},
we see that the surfaces $B_i$ are locally defined by equations
$$
w^2=\mu_i y^2+F_i(y,t)
$$
in local coordinates $w$, $y$ and $t$,
where $\mu_i$ are (different)
non-zero constants, and every monomial of $F_i$ has degree at least $3$.
Since $F_\xi$ is given by $w=y=0$ in the same coordinates, the assertion of the lemma follows.
\end{proof}

\section{From non-standard to standard conic bundles}
\label{section:standard}

Let us use all notation and assumptions of
Section~\ref{section:double-solids}.
If $S$ is smooth away of $O_S$, then the conic bundle $\pi\colon X_0\to\mathbb{P}^2$ is standard by Theorem~\ref{theorem:Cheltsov}; indeed, in this case $X_0$ is smooth,
and Theorem~\ref{theorem:Cheltsov} tells us that $X$ is $\mathbb{Q}$-factorial, so that
the relative Picard group of $X_0$ over $\mathbb{P}^2$ has rank~$1$.
If there are other singular points of $S$ except $O_S$, then $X_0$ is singular
and thus the conic bundle $\pi\colon X_0\to\mathbb{P}^2$ is definitely not standard.
However,
it follows from~\mbox{\cite[Theorem~1.13]{Sarkisov2}}
that there exists a commutative diagram
\begin{equation}
\label{equation:standard-conic-bundle}
\xymatrix{
V\ar@{->}[d]_{\nu}\ar@{-->}[rr]^{\rho}&&{X}_0\ar@{->}[d]^{\pi}\\
U\ar@{->}[rr]^{\varrho}&&\mathbb{P}^2,}
\end{equation}
where $V$ is a smooth projective threefold, $U$ is a smooth surface, $\nu$ is a standard conic bundle,
$\rho$ is a birational map, and $\varrho$ is a birational morphism.
Of course, \eqref{equation:standard-conic-bundle} is not unique.
The goal of this section is to explicitly
construct~\eqref{equation:standard-conic-bundle} with
$\varrho$ being a composition of $|\mathrm{Sing}(S)|-1$ blow ups of smooth points.
Namely, we prove the following theorem.

\begin{theorem}
\label{theorem:conic-bundle-standard-weak-del-Pezzo}
Suppose that $X$ is $\mathbb{Q}$-factorial.
Then there exists a commutative
diagram~\eqref{equation:standard-conic-bundle} where $\nu$ is a standard
conic bundle and the following properties hold.
\begin{itemize}
\item[(i)] The birational morphism $\varrho$ is a composition of $|\mathrm{Sing}(S)|-1$ blow ups of smooth points.

\item[(ii)] The birational morphism $\varrho$ factors as
$$
U\stackrel{\varrho_t'}\longrightarrow
U_t\stackrel{\varrho_t}\longrightarrow
U_c\stackrel{\varrho_c}\longrightarrow
U_n\stackrel{\varrho_n}\longrightarrow\P^2,
$$
where the morphism $\varrho_n$ is a blow up of the nodes of the curve $C$ that are images of the singular points of $X_0$ via $\pi$,
the morphism $\varrho_c$ is a blow up of all cusps of the proper transform of $C$
on the surface $U_n$,
the morphism  $\varrho_t$ is a blow up of all tacnodes of the proper transform
of $C$ on the surface $U_c$,
and the morphism $\varrho_t^\prime$ is a blow up of all nodes of the proper
transform of $C$ on the surface $U_t$ that
are mapped to the tacnodes of the curve $C$ by
$\varrho_n\circ \varrho_c\circ \varrho_t$.
In particular, the birational map $\varrho^{-1}$ is regular
away of $\mathrm{Sing}(C)$.

\item[(iii)] Let $\Delta$ be the degeneration curve of the conic
bundle $\nu$. Then $\Delta$ is the proper transform of the curve $C$, i.~e.
the exceptional curves of $\varrho$ are not contained in $\Delta$.
In particular, one has $\Delta\sim -2K_U$.
\end{itemize}
\end{theorem}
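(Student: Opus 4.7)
The plan is to build $U$ as the explicit tower of blowups prescribed in (ii) and then to construct $V$ from $X_0$ by blowing up (compatibly with $\varrho$) each singular point of $X_0$, using at every step the local normal forms established in the proof of Proposition~\ref{proposition:C6}.

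First, I would set $\varrho_n\colon U_n\to\P^2$ to be the blowup of those nodes of $C$ that lie under singular points of $X_0$ (the ones from case (iii) first bullet), $\varrho_c$ the blowup of the cusps of the proper transform of $C$, $\varrho_t$ the blowup of the tacnodes of the proper transform, and $\varrho_t'$ the blowup of the nodes of the proper transform created over the tacnodes by the previous step. All centers are smooth points of smooth surfaces, so $U$ is smooth. Standard local calculations for plane curve singularities show that blowing up a node separates it into two disjoint smooth branches, blowing up a cusp produces a single smooth branch tangent to the exceptional, and blowing up a tacnode yields a node which becomes smooth after the further blowup of $\varrho_t'$. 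Hence the proper transform $\Delta$ of $C$ in $U$ is nodal, with nodes only over the \emph{other} nodes of $C$, namely those from case (iii) second bullet, which the construction leaves untouched. For the count in (i), Proposition~\ref{proposition:C6} shows that each first-type node of $C$ corresponds to exactly one singular point of $X_0$, each cusp to one, each tacnode to two, and the second-type nodes to none; summing, the total number of blowups in $\varrho$ equals the total number of singular points of $X_0$, which is $|\mathrm{Sing}(S)| - 1$.

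Next, I would build $\nu\colon V\to U$ by blowing up each singular point of $X_0$ in an order compatible with the blowups of the base. At a first-type node of $C$, the total space $X_0$ has an ordinary double point over $\xi$ with local equation $uv = xy$, and blowing it up produces a smooth variety that factors through the blowup of $\P^2$ at $\xi$, with a smooth conic fiber over the exceptional divisor (so the exceptional is not a component of the discriminant). At a cusp or a tacnode, using the explicit local equations~\eqref{eq:q2}--\eqref{eq:q4}, one verifies by direct computation that the blowups of the singular points of $X_0$ lying over $\xi$ (one for a cusp, two for a tacnode, matched with the blowups $\varrho_c$, $\varrho_t$, $\varrho_t'$ of the base) resolve the total space simultaneously with the resolution of $C$, and the fibers over the new exceptional divisors of the base are smooth conics. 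At a second-type node of $C$ no singular point of $X_0$ lies over $\xi$, so no modification is needed; the fiber of $\pi$ is a non-reduced rank-one conic over $\xi$, but this is compatible with $\nu$ being standard ($\xi$ being a node of the discriminant). Assembling these local descriptions, one obtains a smooth $V$ and a standard conic bundle $\nu\colon V\to U$ whose discriminant is exactly the proper transform $\Delta$ of $C$, containing none of the exceptional divisors of $\varrho$, which gives the first part of (iii).

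Finally, the relation $\Delta\sim -2K_U$ follows by induction over the blowups. One has $C\sim -2K_{\P^2}$ on $\P^2$, and every center of a blowup $\sigma\colon U'\to U''$ in the tower has multiplicity two on the proper transform of $C$: this holds for nodes, cusps, tacnodes, and the nodes produced by the first blowup of a tacnode. Combined with $K_{U'} = \sigma^* K_{U''} + E$ and the proper-transform formula $\widetilde{C}' \sim \sigma^*\widetilde{C} - 2E$, this gives $\widetilde{C}' + 2K_{U'}\sim\sigma^*(\widetilde{C} + 2K_{U''})$; iterating over all $|\mathrm{Sing}(S)| - 1$ blowups yields $\Delta + 2K_U\sim 0$. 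The main obstacle is the explicit local analysis at cusps and tacnodes, where several successive blowups of $X_0$ must be performed in parallel with the base blowups, and one has to check carefully using the normal forms from Proposition~\ref{proposition:C6} (and Lemma~\ref{lemma:puchok} for tacnodes) that the resulting $V$ is smooth and that the exceptional divisors of $\varrho$ do not contribute to the discriminant of $\nu$.
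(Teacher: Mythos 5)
The overall shape of your plan --- blow up the base at the singular points of $C$ lying under singularities of $X_0$, modify $X_0$ locally over each such point, and check that the discriminant of the resulting conic bundle is the proper transform of $C$ --- agrees with the paper, and your count for (i) and the relation $\Delta+2K_U\sim\varrho^{*}\bigl(C+2K_{\mathbb{P}^2}\bigr)\sim 0$ are fine. But the core step fails: the modification of $X_0$ over a singular fiber is \emph{not} a composition of blow ups of the singular points of $X_0$. Already in the simplest case, a node $\xi$ of $C$ lying under a node $P_0$ of $X_0$ (Construction I of the paper), the rational map $p_1\colon X_1\dasharrow U_2$ from the blow up of $P_0$ to the blown-up base is given by the pencil of proper transforms of the preimages of lines through $\xi$; every member of that pencil contains the proper transforms $L_1^{+}$ and $L_1^{-}$ of the two components of the fiber $F_\xi$, so $p_1$ is undefined along these curves and $X_1$ does \emph{not} ``factor through the blowup of $\P^2$ at $\xi$'' as you claim. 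One must first flop $L_1^{+}$ and $L_1^{-}$ (they have normal bundle $\mathcal{O}_{\mathbb{P}^1}(-1)\oplus\mathcal{O}_{\mathbb{P}^1}(-1)$ by Lemma~\ref{lemma:normal-bundle-1-1}); only after the flop does one get a morphism with the asserted fibers over the exceptional curve. The cusp case likewise requires a width-$2$ pagoda, and none of these birational models arise from smooth blow ups of $X_0$.

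The tacnode case is where the proposal breaks most seriously. There the required transformation is an antiflip in the fiber (passing through a threefold $\check X_1$ with a $\frac{1}{2}(1,1,1)$ point), four further Atiyah flops, and finally a \emph{divisorial contraction} of one of the two surfaces $\bar E_2^{P}$, $\bar E_2^{Q}$ onto a curve. Without that last contraction the preimage of the exceptional curve $Z_2$ of the first base blow up is the union of two divisors, every fiber over $Z_2$ is a pair of lines, and $Z_2$ lies in the degeneration curve --- contradicting both standardness (the relative Picard rank is too large) and your assertion that the fibers over the new exceptional divisors are generically smooth conics. So ``blowing up the singular points of $X_0$ in parallel with the base blow ups'' cannot produce the required $V$: the flops, the antiflip and the divisorial contraction are not verification details but the actual content of the construction, and your proposal as written does not supply them.
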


In the rest of the section, we will prove
Theorem~\ref{theorem:conic-bundle-standard-weak-del-Pezzo}.
Namely, we will show how to construct the commutative
diagram \eqref{equation:standard-conic-bundle} by analyzing
the geometry of $X_0$ in a neighborhood of a fiber containing
a singular point of $X_0$, producing
a desired transformation in such neighborhood,
and then applying these constructions together to obtain a global
picture.

Let $\xi$ be a point of $C$, and let $F_\xi$ be the preimage of the point $\xi$ via $\pi$.
Let $L_{\xi}$ be a line in $\mathbb{P}^3$ that is mapped
to $\xi$ by the linear projection $p_{O_X}$,
so that $F_\xi$ is the preimage of $L_{\xi}$
via $\tau\circ f_{O_X}$.

Choose  homogeneous coordinates $x$, $y$, $z$ and $t$ in $\mathbb{P}^3$ so that
$O_S=[0:0:1:0]$ and the line $L_{\xi}$ is given by
equations $x=y=0$. One has
$$p_{O_S}([x:y:z:t])=[x:y:t].$$
During our next steps we will always assume that
the quartic $S$ is singular at some point $P_S$ of the line $L_\xi$
such that $P_S$ is different from $O_S$; we can choose $x$, $y$, $z$ and $t$
so that
$$P_S=[0:0:0:1].$$
Since $P_S$ is a node of $S$,
we know that $S$ is given by equation
\begin{equation}\label{eq:quartic-for-flops}
t^2q_2(x,y,z)+tq_3(x,y,z)+q_4(x,y,z)=0,
\end{equation}
where $q_i$ is a form of degree $i$ in three variables, and the quadratic form $q_2$ is non-degenerate.
We can expand~\eqref{eq:quartic-for-flops} as
\begin{multline}\label{eq:quartic-for-flops-details}
t^2\big(\alpha z^2+zq_2^{(1)}(x,y)+q_2^{(2)}(x,y)\big)+\\
+t\big(z^2q_3^{(1)}(x,y)+zq_3^{(2)}(x,y)+q_3^{(3)}(x,y)\big)+\\
+\big(z^2q_4^{(2)}(x,y)+zq_4^{(3)}(x,y)+q_4^{(4)}(x,y)\big)=0,
\end{multline}
where $q_i^{(j)}$ is a form of degree $j$ in two variables,
and $\alpha$ is a constant.

In the sequel we will frequently use the following easy and
well known auxiliary result.

\begin{lemma}\label{lemma:stupid}
Let $Y$ be a normal threefold, $R$ be a surface in $Y$, and $L$ be
a smooth rational curve in $R$ such that $R$ and $Y$ are smooth along $L$.
Suppose that $\mathcal{N}_{L/R}\cong\mathcal{O}_{\P^1}(r)$
and $-K_Y\cdot L=s$ with $2r\geqslant s-2$. Then
$$
\mathcal{N}_{L/Y}\cong\mathcal{O}_{\P^1}(r)\oplus\mathcal{O}_{\P^1}(s-r-2).
$$
\end{lemma}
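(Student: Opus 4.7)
The natural approach is to invoke the normal bundle flag sequence associated to the inclusions $L\subset R\subset Y$:
$$0\longrightarrow \mathcal{N}_{L/R}\longrightarrow \mathcal{N}_{L/Y}\longrightarrow \mathcal{N}_{R/Y}\big|_L\longrightarrow 0.$$
This sequence is available since $Y$ and $R$ are smooth along $L$, so $L$ is a local complete intersection in each and all normal sheaves are locally free. The plan is to (a) identify the rightmost line bundle as $\mathcal{O}_{\mathbb{P}^1}(s-r-2)$ by adjunction, and (b) show the extension splits by computing the relevant $\mathrm{Ext}^1$.

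For (a), I would apply adjunction twice. On the smooth surface $R$, the self-intersection $L^2=\deg\mathcal{N}_{L/R}=r$ together with $(K_R+L)\cdot L=2g(L)-2=-2$ gives $K_R\cdot L=-r-2$. Applying adjunction for the smooth surface $R$ inside $Y$ in the form $K_R|_L=(K_Y+R)|_L$ and using the hypothesis $-K_Y\cdot L=s$ yields
$$\deg\bigl(\mathcal{N}_{R/Y}\big|_L\bigr)=R\cdot L=K_R\cdot L-K_Y\cdot L=-r-2+s=s-r-2.$$
Hence the sequence above becomes
$$0\longrightarrow \mathcal{O}_{\mathbb{P}^1}(r)\longrightarrow \mathcal{N}_{L/Y}\longrightarrow \mathcal{O}_{\mathbb{P}^1}(s-r-2)\longrightarrow 0. \qquad(\ast)$$

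For (b), extensions of the form $(\ast)$ are classified by
$$\mathrm{Ext}^1\bigl(\mathcal{O}_{\mathbb{P}^1}(s-r-2),\,\mathcal{O}_{\mathbb{P}^1}(r)\bigr)\cong H^1\bigl(\mathbb{P}^1,\mathcal{O}_{\mathbb{P}^1}(2r-s+2)\bigr),$$
which vanishes once $2r-s+2\geqslant -1$, and this is guaranteed by the numerical hypothesis $2r\geqslant s-2$. Thus $(\ast)$ splits, and by Grothendieck's theorem on $\mathbb{P}^1$ the splitting type is forced by the degrees of the two line bundles appearing, giving $\mathcal{N}_{L/Y}\cong \mathcal{O}_{\mathbb{P}^1}(r)\oplus \mathcal{O}_{\mathbb{P}^1}(s-r-2)$.

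Since the argument is nothing more than adjunction plus Grothendieck's splitting theorem, I do not anticipate any genuine obstacle. The only mild point worth flagging is the role of the inequality $2r\geqslant s-2$: strictly speaking, even $2r\geqslant s-3$ would suffice to kill $H^1$ and force splitting, but the stronger bound in the statement has the convenient bonus that $r\geqslant s-r-2$, so $\mathcal{O}_{\mathbb{P}^1}(r)$ sits as the (weakly) larger summand of $\mathcal{N}_{L/Y}$, which is presumably how the lemma is meant to be applied elsewhere in the paper.
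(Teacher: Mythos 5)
Your proof is correct and follows essentially the same route as the paper: both establish the exact sequence $0\to\mathcal{O}_{\mathbb{P}^1}(r)\to\mathcal{N}_{L/Y}\to\mathcal{O}_{\mathbb{P}^1}(s-r-2)\to 0$ (the paper gets the quotient degree by computing $\deg\mathcal{N}_{L/Y}=s-2$ from $-K_Y\cdot L$, you get it directly by adjunction for $R\subset Y$ --- the same computation) and then split it using $r\geqslant s-r-2$, which is exactly your $\mathrm{Ext}^1$ vanishing made explicit. Your closing remark that $2r\geqslant s-3$ would already suffice is accurate.
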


\begin{proof}
We have
$$
\deg\mathcal{N}_{L/Y}=2g(L)-2-K_{Y}\cdot L=s-2.
$$
Also, there is an injective morphism $\mathcal{N}_{L/R} \hookrightarrow \mathcal{N}_{L/Y}$.
Therefore, there is an exact sequence of sheaves on $L\cong\mathbb{P}^1$
$$
0\to \mathcal{O}_{\mathbb{P}^1}(r)\to \mathcal{N}_{L/Y}\to
\mathcal{O}_{\mathbb{P}^1}(s-r-2)\to 0.
$$
Since $r\geqslant s-r-2$, the latter exact sequence splits and
gives the assertion of the lemma.
\end{proof}

Now we are ready to describe birational maps that are needed to transform
$\pi$ to a standard conic bundle.

\subsection*{Construction I}

Suppose that $S$ is singular at exactly two points $O_S$ and $P_S$ of the line~$L_\xi$,
and $L_\xi$ is not contained in $S$, so that the situation is described by the first option of case~(iii) of Proposition~\ref{proposition:C6}.
This happens if and only if one has $\alpha\neq 0$ in equation~\eqref{eq:quartic-for-flops-details}.
In particular, we can assume that
\begin{equation}\label{eq:quadric-I}
q_2(x,y,z)=xy+z^2.
\end{equation}

Denote by $P_0$ the preimage of the point $P_S$ on $X_0$.
The threefold $X_0$ has a node at $P_0$
and is smooth elsewhere along $F_\xi$.
The fiber $F_\xi$ consists of two smooth rational curves
that intersect transversally at the point $P_0$.

Let $f_{P_0}\colon X_1\to X_0$ be
the blow up of the point $P_0$,
and let $E_1$ be the exceptional divisor of~$f_{P_0}$.
One has $E_1\cong\mathbb{P}^1\times\mathbb{P}^1$,
and the threefold $X_1$ is smooth
along the proper transform of $F_\xi$.

Denote by $L_0^+$ and $L_0^-$ the irreducible components of $F_\xi$,
and denote by
$L_1^+$ and $L_1^-$ their proper transforms on $X_1$.
Then the curves ${L}_1^+$ and ${L}_1^-$ are disjoint smooth rational
curves.

\begin{lemma}\label{lemma:normal-bundle-1-1}
Let $L_1$ be one of the curves ${L}_1^+$ and ${L}_1^-$.
Then
$\mathcal{N}_{{L}_1/X_1}\cong\mathcal{O}_{\mathbb{P}^1}(-1)\oplus\mathcal{O}_{\mathbb{P}^1}(-1)$.
\end{lemma}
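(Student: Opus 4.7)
The plan is to apply Lemma~\ref{lemma:stupid} to $Y=X_1$ and $L=L_1$, for a suitable smooth surface $R\subset X_1$ through $L_1$. By the symmetry of the two components we may take $L_1=L_1^+$. The numerical inputs required are $s:=-K_{X_1}\cdot L_1$ and the self-intersection $r:=(L_1)^2|_R$; I will show $s=0$ and $r=-1$, so that the hypothesis $2r\geqslant s-2$ of the lemma holds with equality and the lemma yields the splitting $\mathcal{O}_{\mathbb{P}^1}(-1)\oplus\mathcal{O}_{\mathbb{P}^1}(-1)$.

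For $s$: the blow up $f_{P_0}$ of the ordinary double point $P_0$ has exceptional divisor $E_1\cong\mathbb{P}^1\times\mathbb{P}^1$ with discrepancy one, so $K_{X_1}=f_{P_0}^*K_{X_0}+E_1$. Since $L_0$ is a smooth curve passing through $P_0$, its proper transform $L_1$ meets $E_1$ transversally in one point, so $E_1\cdot L_1=1$. Hurwitz gives $-K_X\sim 2\tau^*\mathcal{O}_{\mathbb{P}^3}(1)$, which combined with the linear equivalence $(\tau\circ f_{O_X})^*\mathcal{O}_{\mathbb{P}^3}(1)-E_{O_X}\sim\pi^*\mathcal{O}_{\mathbb{P}^2}(1)$ from the excerpt yields $-K_{X_0}\sim 2\pi^*\mathcal{O}_{\mathbb{P}^2}(1)+E_{O_X}$. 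As $L_0$ lies in a fiber of $\pi$, the first summand vanishes on it; moreover $\tau^{-1}(L_\xi)$ is a reducible curve with two smooth rational components meeting at the two nodes $O_X$ and $P_0$ of $X$, so the component whose proper transform under $f_{O_X}$ is $L_0$ hits $E_{O_X}$ transversally in a single point, giving $E_{O_X}\cdot L_0=1$. Thus $s=-K_{X_0}\cdot L_0-E_1\cdot L_1=1-1=0$.

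For $R$: take a general line $\ell\subset\mathbb{P}^2$ through $\xi$, set $R_0=\pi^{-1}(\ell)$, and let $R=R_1$ be the proper transform of $R_0$ in $X_1$. A direct computation in the local analytic model $X_0=\{xy=zw\}$, $P_0=0$, with $L_0^\pm$ the $x$- and $y$-axes, shows that any hyperplane section of $X_0$ containing both $L_0^\pm$ is cut out (up to a linear change in $(z,w)$) by $z+\gamma w=0$ with $\gamma\neq 0$; equivalently $R_0$ has local equation $xy+\gamma w^2=0$, which is a quadric cone with an $A_1$ singularity at $P_0$. In the $x$-chart of $X_1$ the proper transform of this hyperplane becomes the smooth $\{z'+\gamma w'=0\}$, so $f_{P_0}$ simultaneously resolves the induced surface singularity: $R_1$ is smooth along $L_1$, and the exceptional curve $E_R:=R_1\cap E_1$ is a $(-2)$-curve on $R_1$ (this is expected for the minimal resolution of an $A_1$ point; alternatively, $\mathcal{N}_{E_1/X_1}\cong\mathcal{O}_{E_1}(-1,-1)$ together with the fact that $E_R$ is a $(1,1)$-curve on $E_1\cong\mathbb{P}^1\times\mathbb{P}^1$ gives $E_R^2=-2$ directly).

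Finally, the composition $R_1\to R_0\to\ell\cong\mathbb{P}^1$ presents $R_1$ as a smooth ruled surface over $\mathbb{P}^1$, and the pullback of a point from $\ell$ produces the reducible fiber $L_1^++L_1^-+E_R$ over $\xi$, in which $L_1^+\cdot L_1^-=0$ (the two proper transforms are separated by the blow up) and $L_1^\pm\cdot E_R=1$. Expanding $(L_1^++L_1^-+E_R)^2=0$ and using $E_R^2=-2$ gives $L_1^{+2}+L_1^{-2}=-2$. Moreover $L_1^+$ is a leaf of this fiber tree --- it meets only one other component --- and such a leaf of a reducible fiber on a smooth ruled surface must be a $(-1)$-curve, else no sequence of $(-1)$-contractions could lead to a smooth $\mathbb{P}^1$-fiber. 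Hence $r=(L_1)^2|_{R_1}=-1$, and Lemma~\ref{lemma:stupid} with $r=-1$, $s=0$ gives $\mathcal{N}_{L_1/X_1}\cong\mathcal{O}_{\mathbb{P}^1}(-1)\oplus\mathcal{O}_{\mathbb{P}^1}(s-r-2)=\mathcal{O}_{\mathbb{P}^1}(-1)\oplus\mathcal{O}_{\mathbb{P}^1}(-1)$. The most delicate point in the argument is the simultaneous-resolution claim of the third paragraph; everything else is bookkeeping.
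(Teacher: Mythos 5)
Your proof is correct and follows essentially the same route as the paper: take the preimage of a general plane through $L_\xi$, verify it is smooth along $L_1$ with $L_1^2=-1$ and that $K_{X_1}\cdot L_1=0$, then apply Lemma~\ref{lemma:stupid}. You obtain $L_1^2=-1$ from the reducible fiber $L_1^++L_1^-+E_R$ of $R_1\to\ell$ (where in fact $0=F\cdot L_1^+=(L_1^+)^2+1$ gives it immediately, bypassing the slightly informal ``leaf'' argument) rather than from $K_{R_1}\cdot L_1=-1$ and adjunction as in the paper, but this is only a difference in bookkeeping.
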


\begin{proof}
Let $\Pi\subset\mathbb{P}^3$ be a general plane containing the line $L_{\xi}$.
Then $\Pi$ is given by
$$
\lambda x+\mu y=0
$$
for some $[\lambda:\mu]\in\mathbb{P}^1$.
Let $R$ be the preimage of $\Pi$ via $\tau$.
We see from equation~\eqref{eq:quartic-for-flops-details}
that~$R$ has nodes at the preimages of the points $P_S$ and $O_S$ and is smooth elsewhere.

Let $R_1$ be the proper transform of $R$ on the threefold $X_1$.
Then the surface $R_1$ is smooth.
One has
$K_{R_1}\cdot L_1=-1$, so that $L_1^2=-1$ on $R_1$, and the normal bundle
$$
\mathcal{N}_{L_1/R_1}\cong\mathcal{O}_{\mathbb{P}^1}(-1).
$$
On the other hand, we know that
$$
K_{X_1}\cdot L_1=0,
$$
which implies the assertion by Lemma~\ref{lemma:stupid}.
\end{proof}

By Lemma~\ref{lemma:normal-bundle-1-1} one can flop each of the curves $L_1^+$ and $L_1^-$.
Namely, each of these two flops is just an Atiyah flop, i.\,e.
it can be obtained
by blowing up the curve $L_1^+$ or $L_1^-$ and blowing down the exceptional
divisor isomorphic to $\mathbb{P}^1\times\mathbb{P}^1$ along another ruling
onto a curve contained in a smooth locus of the resulting threefold (see \cite[\S4.2]{Kulikov}, \mbox{\cite[\S2]{Francia}}).
Let $\chi\colon X_1\dasharrow  X_2$
be the composition of Atiyah flops in the curves $L_1^+$ and $L_1^-$.
Let~$f_{\xi}\colon U_2\to \mathbb{P}^2$ be the blow up of the point $\xi$,
and $p_2\colon X_2\dasharrow U_2$ be the corresponding
rational map. Put $p_1=p_2\circ\chi$.

Let $Z\cong\mathbb{P}^1$ be the exceptional divisor of the blow up $f_{\xi}$, and
$C_2$ be the proper transform of the curve $C$ on $U_2$.
By Proposition~\ref{proposition:C6}(iii) the intersection $C_2\cap Z$
consists of two points, and $C_2$ is smooth at these points.
Let $E_2$ be the proper transform of the divisor
$E_1$ on the threefold~$X_2$.

\begin{lemma}\label{lemma:regular-1}
The rational map $p_2$ is a morphism, and $p_2(E_2)=Z$.
The fiber of $p_2$ over each of the two points
in $C_2\cap Z$ is a union of two smooth rational
curves that intersect transversally at one point.
All other fibers of $p_2$ over $Z$ are smooth, so that $C_2$
is the degeneration curve of the conic bundle $p_2$.
\end{lemma}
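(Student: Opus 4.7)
The plan is to work in the explicit coordinate chart of $X_1$ obtained from the blow-up $f_{P_0}$ of $P_0$ in the ``$z$-direction''. In this chart $X_1$ is cut out by an equation of the form
\[
\tilde W^2=\tilde X\tilde Y+1+zq_3(\tilde X,\tilde Y,1)+z^2q_4(\tilde X,\tilde Y,1),
\]
the exceptional divisor $E_1$ is $\{z=0\}$, the curves $L_1^\pm$ are given by $\{\tilde X=\tilde Y=0,\ \tilde W=\pm 1\}$, and the composition $p_1=p_2\circ\chi$ into the affine chart $(x,\mu)=(x,y/x)$ of $U_2$ is expressed as $(\tilde X,\tilde Y,\tilde W,z)\mapsto(z\tilde X,\tilde Y/\tilde X)$. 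The first step is to verify that $p_1$ is regular away from $L_1^+\cup L_1^-$: the displayed formula is manifestly regular when $\tilde X\neq 0$, and a symmetric computation in the chart $(y,x/y)$ of $U_2$ handles the case $\tilde Y\neq 0$, so the only indeterminacy locus is $\{\tilde X=\tilde Y=0\}=L_1^+\cup L_1^-$.

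The second step is to resolve this indeterminacy by carrying out the Atiyah flops $\chi$ explicitly. Locally around $L_1^+$, pass to the blow-up chart where $s=\tilde Y/\tilde X$: the lifted map becomes the morphism $(\tilde X,s,z)\mapsto(z\tilde X,s)$. The ruling $\{\tilde X=0,\ s=\mathrm{const}\}$ of the exceptional quadric over $L_1^+$ is collapsed by this morphism to a single point of $Z$, so contracting this ruling---which is exactly the second half of the Atiyah flop in $\chi$---produces a well-defined morphism to $U_2$. The image of the contracted ruling is the flopped curve $\tilde L_1^+$, and the same formula shows that the induced map $\tilde L_1^+\to Z$ is an isomorphism; the analogous argument works for $L_1^-$. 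Together these checks show that $p_2$ is a morphism on all of $X_2$, and since $p_2(E_2)$ contains $p_2(\tilde L_1^+)=Z$ we obtain $p_2(E_2)=Z$.

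For the final step I would identify the fibres over $Z$. Any fibre of $p_2$ over a point of $Z$ lies in the proper transform $E_2$ (which also contains the curves $\tilde L_1^\pm$), because the birational map $X_2\dashrightarrow X_0$ is an isomorphism over $\mathbb{P}^2\setminus\{\xi\}$, while the preimage of $\xi$ in $X_1$ is supported on $E_1\cup L_1^+\cup L_1^-$. The fibre of $p_2$ over $(0,\mu_0)\in Z$ therefore equals the intersection of $E_1\cong\{w^2=xy+z^2\}\subset\mathbb{P}^3$ with the hyperplane $y=\mu_0 x$, giving the projective conic $w^2=\mu_0 x^2+z^2$; its discriminant is proportional to $\mu_0$, so the fibre is smooth when $\mu_0\neq 0,\infty$, while for $\mu_0\in\{0,\infty\}$ it degenerates into two distinct lines meeting transversally at a single point. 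To identify $\{0,\infty\}$ with $C_2\cap Z$ I would expand the local equation of $C$ at $\xi$: after rewriting \eqref{eq:quartic-for-flops} as a quadratic polynomial in $z$ (which is legitimate because the vanishing of the $z^3$ and $z^4$ coefficients is forced by $O_S$ being a singular point of $S$) and taking its discriminant, the normalisation $q_2=xy+z^2$ guarantees that the leading quadratic term of $F(x,y,1)$ is a nonzero scalar multiple of $xy$, whence $C_2\cap Z=\{[1:0],[0:1]\}$ as required.

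The principal technical obstacle will be the bookkeeping around the Atiyah flop: one must verify that the local ``blow up and contract'' described above really corresponds to the global $\chi$, and in particular that the contracted ruling is the one mapped to points of $Z$ (rather than the ruling whose contraction would recover $L_1^+$). Once this is in hand, the remaining computations with the explicit plane conic $w^2=\mu_0 x^2+z^2$ and with the quadratic part of the local equation of $C$ are elementary.
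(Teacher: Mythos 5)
Your proposal is correct, but it proves the key regularity statement by a different mechanism than the paper. You resolve the indeterminacy of $p_1$ by writing everything in explicit local coordinates: the blow-up chart of $P_0$ gives $p_1=(z\tilde X,\tilde Y/\tilde X)$, whose indeterminacy locus is exactly $L_1^+\cup L_1^-$, and the chart $s=\tilde Y/\tilde X$ of the blow-up of $L_1^\pm$ exhibits the lift as the morphism $(\tilde X,s,z)\mapsto(z\tilde X,s)$, which is constant on the rulings $\{s=\mathrm{const}\}$ dominating $L_1^\pm$ and hence descends through the second contraction of the Atiyah flop. (Note that this last computation already settles the ``bookkeeping obstacle'' you flag at the end: the collapsed ruling is the one parametrized by $z$, i.e.\ the ruling mapping onto $L_1^+$, which is precisely the ruling contracted by the flop, so nothing remains to check there.) The paper instead works on the common resolution $W=\mathrm{Bl}_{L_1^+\cup L_1^-}X_1$ and proves that the proper transform $\mathcal{H}_W$ of the pencil of lines through $\xi$ is base point free, by computing its restriction classes on $G_W^{\pm}$ and on $E_W^P$ and using the identification of $p_1\vert_{E_1}$ with the projection of the quadric $w^2=xy+z^2$ from the secant line $x=y=0$; base-point-freeness of $\mathcal{H}_{X_2}$ then gives regularity of $p_2$. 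Your coordinate computation is more elementary and self-contained for Construction~I, whereas the paper's linear-system argument is chosen because it transfers with minimal change to Constructions~II and~III, where explicit flop/antiflip charts would be considerably messier. The fiber analysis (the conic $w^2=\mu_0x^2+z^2$ with discriminant proportional to $\mu_0$, and the identification $C_2\cap Z=\{[1:0],[0:1]\}$ from the quadratic part $-4xy$ of the discriminant of the quartic viewed as a quadratic in $z$) agrees with the paper's appeal to formula \eqref{equation:linear-projection-1} and Proposition~\ref{proposition:C6}(iii), and is carried out correctly; the only cosmetic omission is that you check regularity of $p_1$ only in the $z$-chart of the blow-up of $P_0$, but the remaining charts are immediate.
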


\begin{proof}
Denote by $\omega_1\colon W\to X_1$ the blow up of $X_1$ along the curves
$L_1^+$ and $L_1^-$, so that there is a commutative
diagram
$$
\xymatrix{
&W\ar@{->}[rd]^{\omega_1}\ar@{->}[ld]_{\omega_2}&\\
X_2&& X_1\ar@{-->}[ll]_{\chi}
}
$$
Let $G_W^+$ and $G_W^-$ be the exceptional
divisors of $\omega_1$ over the curves $L_1^+$ and $L_1^-$,
respectively. Recall that
$$G_W^+\cong G_W^-\cong\P^1\times\P^1.$$
Denote by $l_1^+$ and $l_1^-$ the classes of the rulings
of $G_W^+$ and $G_W^-$ that are mapped surjectively onto
$L_1^+$ and $L_1^-$ by $\omega_1$ (and are contracted by~$\omega_2$),
and denote by
$l_2^+$ and $l_2^-$ the classes of the rulings
of $G_W^+$ and $G_W^-$ that are contracted by~$\omega_1$.
Let $E_W^P$ be the proper transforms of the surface $E_1$ on~$W$.
One has
$$
E_W^P\vert_{G_W^+}\sim l_2^+,\quad
E_W^P\vert_{G_W^-}\sim l_2^-.
$$

Let $\mathcal{H}$ be the pencil of curves that are proper transforms
on $U_2$ of lines in $\P^2$ passing through the point $\xi$.
Note that the class of $H+R$, where $H\in\mathcal{H}$
and $R\in |f_{\xi}^*\mathcal{O}_{\P^2}(1)|$,
is very ample.
Note also that the proper transform on $X_2$ of the linear
system $|f_{\xi}^*\mathcal{O}_{\P^2}(1)|$ is base point free.
Thus, to conclude that
the rational map $p_2$ is a morphism it is enough to check that the proper transform
$\mathcal{H}_{X_2}$ of the linear system $\mathcal{H}$ on $X_2$
has no base points.

Let us first show that the proper transform $\mathcal{H}_W$ of the pencil $\mathcal{H}_{X_2}$ on $W$ is base point free.
By construction, its base locus is contained in the union~\mbox{$G_W^+\cup G_W^-\cup E_W^P$}.
One has
$$
\mathcal{H}_W\sim \left(\pi\circ f_{P_0}\circ\omega_1\right)^*\mathcal{O}_{\mathbb{P}^2}(1)-G_W^{+}-G_W^{-}-E_W^P.
$$
This gives $\mathcal{H}_W\vert_{G_W^+}\sim l_1^+$ and $\mathcal{H}_W\vert_{G_W^-}\sim l_1^-$.
Therefore, either two different elements of the pencil
$\mathcal{H}_W$ do not have intersection points in $G_W^+$, or all of them contain one and the same ruling of class $l_1^+$.
The latter is impossible since the proper transforms of elements
of $\mathcal{H}$ on $X_1$ are transversal to each other
at a general point of $L_1^+$.
Thus, $\mathcal{H}_W$ does not have base points on $G_W^+$.
In a similar way we see that it does not have base points
on $G_W^-$.

Let us check that the pencil $\mathcal{H}_W$ has no base points in $E_W^P$.
It is most convenient to do this by analyzing the behavior of the rational map $p_1$ along the surface $E_1$.
Using equation~\eqref{eq:quadric-I} and writing down the equation of $X$, we see that the surface $E_1$ is identified with a quadric surface
given by
$$
xy+z^2=w^2
$$
in $\mathbb{P}^3$ with homogeneous coordinates $x$, $y$, $z$, and $w$.
Note that $x$ and $y$ can be interpreted as homogeneous coordinates on $Z$.
The closure of the image of $E_1$ with respect to the rational map $p_1$
is the curve $Z$. The restriction $p_{E_1}$ of $p_1$ to $E_1$
is given by
\begin{equation}
\label{equation:linear-projection-1}
[x:y:z:w]\mapsto [x:y].
\end{equation}
Therefore, $p_{E_1}$ is a projection from the line $x=y=0$, which intersects $E_1$ at the points~\mbox{$[0:0:1:1]$} and~\mbox{$[0:0:1:-1]$}.
Note that these are the points $P_1^+={L}_1^+\cap E_1$ and~$P_1^-=L_1^-\cap E_1$, up to relabelling
$P_1^+$ and $P_1^-$.
This implies that the pencil $\mathcal{H}_W$ has no base points in $E_W^P$ except possibly
in the two curves contracted to $P_1^+$ and $P_1^-$ by $\omega_1$.
But these curves are contained in the
divisors $G_W^+$ and $G_W^-$, respectively, and we already know
that  $\mathcal{H}_W$ has no base points in these surfaces.
Thus, the pencil $\mathcal{H}_W$ is base point free.
In particular, we see that $p_2\circ\omega_2$ is a morphism.

The restrictions of $\mathcal{H}_W$ to the surfaces $G_W^+$ and $G_W^-$ are contained in the fibers
of the contraction $\omega_2$. This shows that the pencil $\mathcal{H}_{X_2}$ is also base point free,
so that $p_2$ is a morphism.

The remaining assertions of the lemma
follow from~\eqref{equation:linear-projection-1}.
\end{proof}

Putting everything together, we obtain a commutative diagram
$$
\xymatrix{
&W\ar@{->}[rd]^{\omega_1}\ar@{->}[ld]_{\omega_2}&\\
X_2\ar@{->}[dd]_{p_2}&&X_1\ar@{-->}[ddll]^{p_1}\ar@{->}[drr]^{f_{P_0}}\ar@{-->}[ll]_{\chi}\\
&&&&X_0\ar@{->}[d]^{\pi}\\
{U_2}\ar@{->}[rrrr]^{f_\xi}&&&&\mathbb{P}^2,}
$$

\subsection*{Construction II}

Suppose that $S$ is singular at exactly two points $O_S$ and $P_S$ of the line $L_\xi$,
and $L_\xi$ is contained in $S$, so that the situation is described by case~(ii) of Proposition~\ref{proposition:C6}.\
This happens if and only if in equation~\eqref{eq:quartic-for-flops-details}
one has~\mbox{$\alpha=0$}, and the linear forms
$q_2^{(1)}(x,y)$ and $q_3^{(1)}(x,y)$ are not proportional.
In particular, we can assume that~$q_2^{(1)}(x,y)=x$ and
\begin{equation}\label{eq:quadric-II}
q_2(x,y,z)=xz+y^2.
\end{equation}

As in Construction~I, denote by $P_0$
the preimage on $X_0$ of the point $P_S$. Note that
$F_\xi$ is a smooth rational curve passing through $P_0$.
The threefold $X_0$ has a node at $P_0$
and is smooth elsewhere along $F_\xi$.

Let $f_{P_0}\colon X_1\to X_0$ be
the blow up of the point $P_0$,
and let $E_1$ be the exceptional divisor of $f_{P_0}$.
One has $E_1\cong\mathbb{P}^1\times\mathbb{P}^1$.
Denote by $L_1$ the proper transform of $F_\xi$ on $X_1$.
The threefold $X_1$ is smooth along $L_1$.

We need the following auxiliary result which is actually easy and well known.

\begin{lemma}\label{lemma:normal-bundle-aux}
Let $Y$ be a normal threefold, and $C$ be a smooth rational curve contained in the smooth
locus of $Y$. Let $P$ be a point on $C$, and $h\colon Y'\to Y$ be the blow up of $P$.
Let $C'$ be the proper transform of $C$ on $Y'$. Write
$\mathcal{N}_{C'/Y'}\cong\mathcal{O}_{\mathbb{P}^1}(a)\oplus\mathcal{O}_{\mathbb{P}^1}(b)$.
Then
$$
\mathcal{N}_{C/Y}\cong\mathcal{O}_{\mathbb{P}^1}(a+1)\oplus\mathcal{O}_{\mathbb{P}^1}(b+1).
$$
\end{lemma}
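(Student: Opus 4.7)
The plan is to directly compare $\mathcal{N}_{C/Y}$ and $\mathcal{N}_{C'/Y'}$ via a local analysis of $h$ near $P$, using that $h|_{C'}\colon C'\to C$ is an isomorphism and that the exceptional divisor $E$ of $h$ meets $C'$ transversally in the single point $P'=h^{-1}(P)\cap C'$ (both of which hold because $C$ is smooth, contained in the smooth locus of $Y$, and passes through $P$).

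First I would check the numerical statement: since $K_{Y'}=h^*K_Y+2E$ and $E\cdot C'=1$, one computes
$$
\deg\mathcal{N}_{C'/Y'}=-K_{Y'}\cdot C'-2=\left(-K_Y\cdot C-2\right)-2=\deg\mathcal{N}_{C/Y}-2,
$$
which is consistent with the claimed shift $(a,b)\mapsto(a+1,b+1)$ of the splitting type.

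For the splitting type itself, I would work in local coordinates $(x,y,z)$ at $P$ with $C=\{y=z=0\}$, so that $dy,dz$ form a local frame for $\mathcal{N}^*_{C/Y}$. In the standard affine chart of the blow up with coordinates $(x',y',z')$ and map $(x',y',z')\mapsto(x',x'y',x'z')$, the strict transform $C'$ is cut out by $y'=z'=0$, so that $dy',dz'$ give a local frame of $\mathcal{N}^*_{C'/Y'}$ at $P'$, and $E$ is locally $\{x'=0\}$. The induced map of conormal sheaves sends $dy\mapsto x'\,dy'$ and $dz\mapsto x'\,dz'$ along $C'$. Hence the natural injection $h^*\mathcal{N}^*_{C/Y}|_{C'}\hookrightarrow\mathcal{N}^*_{C'/Y'}$ has image equal to $x'\cdot\mathcal{N}^*_{C'/Y'}=\mathcal{N}^*_{C'/Y'}\otimes\mathcal{O}_{C'}(-P')$. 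This yields the isomorphism
$$
h^*\mathcal{N}^*_{C/Y}|_{C'}\cong\mathcal{N}^*_{C'/Y'}\otimes\mathcal{O}_{C'}(-P'),
$$
and dualizing together with the fact that $h|_{C'}$ is an isomorphism gives
$$
\mathcal{N}_{C/Y}\cong\mathcal{N}_{C'/Y'}\otimes\mathcal{O}_{C'}(P').
$$
Since tensoring $\mathcal{O}_{\mathbb{P}^1}(a)\oplus\mathcal{O}_{\mathbb{P}^1}(b)$ with $\mathcal{O}_{\mathbb{P}^1}(1)$ produces $\mathcal{O}_{\mathbb{P}^1}(a+1)\oplus\mathcal{O}_{\mathbb{P}^1}(b+1)$, the stated splitting is immediate.

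There is no real obstacle: the statement reduces to a one-line local computation with the blow-up chart. The only point that needs a moment's care is verifying that the image of the natural map of conormal bundles is exactly $\mathcal{N}^*_{C'/Y'}(-P')$ rather than a more complicated subsheaf, but since both generators $dy,dz$ pick up a single factor of the local defining equation $x'$ of $E$, this identification is transparent.
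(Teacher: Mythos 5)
Your argument is correct, and it proves the lemma by a different route than the paper. You identify the sheaf-level isomorphism
$$
\mathcal{N}_{C'/Y'}\cong h^*\mathcal{N}_{C/Y}\big|_{C'}\otimes\mathcal{O}_{C'}(-P'),
$$
via the explicit blow-up chart, from which the shift of the splitting type is immediate (the degree count you do first is then not even needed, though it is a useful sanity check). The paper instead argues purely with numerical invariants: writing $\mathcal{N}_{C/Y}\cong\mathcal{O}_{\mathbb{P}^1}(c)\oplus\mathcal{O}_{\mathbb{P}^1}(d)$, it gets $(c+d)-(a+b)=2$ from the canonical class, and then pins down $|c-d|=|a-b|$ by blowing up $C$ in $Y$ and $C'$ in $Y'$, observing that the two exceptional divisors are Hirzebruch surfaces $\mathbb{F}_{|c-d|}$ and $\mathbb{F}_{|a-b|}$ which are identified because the second is the proper transform of the first under the blow up of the fiber of $\mathbb{F}_{|c-d|}\to C$ over $P$. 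Your approach is more direct and yields strictly more information (a canonical isomorphism of bundles rather than just the pair of integers); the paper's approach avoids local coordinates at the cost of introducing the auxiliary diagram of blow-ups. The only point in your write-up deserving an extra half-sentence is the injectivity of the natural map $h^*\mathcal{N}^*_{C/Y}|_{C'}\to\mathcal{N}^*_{C'/Y'}$: it is an isomorphism off $P'$ and a map of locally free sheaves on an integral curve, so its kernel is torsion inside a locally free sheaf and hence zero; with that noted, the identification of the image with $\mathcal{N}^*_{C'/Y'}\otimes\mathcal{O}_{C'}(-P')$ is exactly as you say, since the matrix of the map in the frames $dy,dz$ and $dy',dz'$ is $x'$ times the identity.
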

\begin{proof}
Suppose that $\mathcal{N}_{C/Y}\cong\mathcal{O}_{\mathbb{P}^1}(c)\oplus\mathcal{O}_{\mathbb{P}^1}(d)$.
One has
\begin{equation}\label{eq:diff}
(d+c)-(a+b)=\deg\mathcal{N}_{C/Y}-\deg\mathcal{N}_{C/Y}=2.
\end{equation}

Let $g\colon W\to Y$ be the blow up of the curve $C$, and $G$ be the exceptional divisor of~$g$. Then $G$ is a Hirzebruch surface $\mathbb{F}_r$,
where $r=|c-d|$. Let $Z$ be the fiber of the pro\-jec\-ti\-on~$g\vert_G\colon G\to C$
over the point $P$.
Let $g'\colon W'\to Y'$ be the blow up of the curve $C'$,
and $G'$ be the exceptional divisor of $g'$. Then $G'$ is a Hirzebruch surface $\mathbb{F}_{r'}$,
where~$r'=|a-b|$.

Note that there is commutative diagram:
$$
\xymatrix{
&W'\ar@{->}[ld]_{g'}\ar@{->}[rd]^{h'}&\\
Y'\ar@{->}[rd]_h&&W\ar@{->}[ld]^g\\
&Y&
}
$$
where $h'\colon W'\to W$ that is a blow up of the curve $Z$.
Its existence follows from purely local computations near the point $P$.
In particular, the surface $G'$ is the proper
transform of the surface $G$ with respect to $h'$, so that
$G\cong G'$. Thus we have
$$
|c-d|=r=r'=|a-b|,
$$
and applying~\eqref{eq:diff} we obtain the assertion of the lemma.
\end{proof}

As in the proof of Lemma~\ref{lemma:normal-bundle-1-1}, let $\Pi\subset\mathbb{P}^3$ be a general
plane containing the line $L_{\xi}$, and let $R$ be the preimage of $\Pi$ via $\tau$.
Denote by $R_0$ the proper transform of $R$ on the threefold $X_0$.
Then it follows from equation~\eqref{eq:quartic-for-flops-details}
that $R_0$ has a node at the point $P_0$,
one more node at some point $P_{\Pi}\in F_\xi$, and is smooth elsewhere.
One has
$$
K_{R_0}\cdot F_\xi=K_{X_0}\cdot F_\xi=-1.
$$

\begin{lemma}\label{lemma:normal-bundle-0-2}
One has
$\mathcal{N}_{L_1/X_1}\cong\mathcal{O}_{\mathbb{P}^1}\oplus\mathcal{O}_{\mathbb{P}^1}(-2)$.
\end{lemma}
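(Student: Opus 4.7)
The plan is to find a smooth surface containing a proper transform of $L_1$, apply Lemma~\ref{lemma:stupid} there, and then descend to $X_1$ via Lemma~\ref{lemma:normal-bundle-aux}. The natural candidate $R_1$ fails to be smooth along $L_1$ because of its node at $P_\Pi$, so I first resolve that node.

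Let $f_{P_\Pi}\colon X_2'\to X_1$ be the blow up of the smooth point $P_\Pi$, with exceptional divisor $E_\Pi\cong\P^2$, and let $L_2'$ and $R_2'$ denote the proper transforms of $L_1$ and $R_1$. Since $X_1$ is smooth at $P_\Pi$ and $R_1$ has a node there, $R_2'$ is smooth and contains $L_2'$. The crucial global input is $R_0\cdot F_\xi=0$, which follows from the linear equivalence $R_0\sim(\tau\circ f_{O_X})^*\mathcal{O}_{\P^3}(1)-E_{O_X}$ together with $(\tau\circ f_{O_X})_*F_\xi=L_\xi$ and the identity $E_{O_X}\cdot F_\xi=1$; the latter is obtained by intersecting the formula $(\tau\circ f_{O_X})^*\mathcal{O}_{\P^3}(1)-E_{O_X}\sim\pi^*\mathcal{O}_{\P^2}(1)$ recorded in Section~\ref{section:double-solids} with $F_\xi$ and using that $F_\xi$ lies in a fiber of~$\pi$.

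A local computation at $P_0$ in coordinates adapted to~\eqref{eq:quartic-for-flops-details} with $q_2=xz+y^2$ shows that $L_1$ meets the exceptional quadric $E_1$ transversally in a single point and that $f_{P_0}^*R_0=R_1+E_1$. Combined with the projection formula, the identity $K_{X_0}\cdot F_\xi=-1$ stated in the text, and $K_{X_1}=f_{P_0}^*K_{X_0}+E_1$, this yields $K_{X_1}\cdot L_1=0$ and $R_1\cdot L_1=-1$. Pulling back once more via $f_{P_\Pi}$, using $L_2'\cdot E_\Pi=1$, $K_{X_2'}=f_{P_\Pi}^*K_{X_1}+2E_\Pi$, and $R_2'=f_{P_\Pi}^*R_1-2E_\Pi$ (since $R_1$ has multiplicity $2$ at the node $P_\Pi$), I obtain $K_{X_2'}\cdot L_2'=2$ and $R_2'\cdot L_2'=-3$. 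Adjunction on the smooth surface $R_2'$ then gives $K_{R_2'}\cdot L_2'=(K_{X_2'}+R_2')\cdot L_2'=-1$, and since $L_2'\cong\P^1$, this forces $(L_2')^2=-1$ on $R_2'$, i.e.\ $\mathcal{N}_{L_2'/R_2'}\cong\mathcal{O}_{\P^1}(-1)$.

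Lemma~\ref{lemma:stupid} applied with $r=-1$ and $s=-K_{X_2'}\cdot L_2'=-2$ (the hypothesis $2r\geqslant s-2$ reads $-2\geqslant -4$) produces $\mathcal{N}_{L_2'/X_2'}\cong\mathcal{O}_{\P^1}(-1)\oplus\mathcal{O}_{\P^1}(-3)$, and then Lemma~\ref{lemma:normal-bundle-aux}, applied to the blow up $f_{P_\Pi}$ of the smooth point $P_\Pi$ on the smooth curve $L_1$, immediately yields the claimed $\mathcal{N}_{L_1/X_1}\cong\mathcal{O}_{\P^1}\oplus\mathcal{O}_{\P^1}(-2)$. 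The main technical obstacle is the chart-level verification that $L_1\cdot E_1=1$ and that $E_1$ appears with multiplicity exactly one in $f_{P_0}^*R_0$; both reduce to an explicit computation in the standard charts of the blow up of the ODP $\{uv=xz\}$ applied to the defining equation $\lambda x+\mu y=0$ of~$\Pi$.
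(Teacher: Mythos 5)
Your proposal is correct and follows essentially the same route as the paper: blow up the second node $P_\Pi$ of $R$, show the proper transform of $F_\xi$ is a $(-1)$-curve on the resolved surface, combine this with $-K\cdot L=-2$ via Lemma~\ref{lemma:stupid} to get $\mathcal{O}_{\mathbb{P}^1}(-1)\oplus\mathcal{O}_{\mathbb{P}^1}(-3)$, and descend with Lemma~\ref{lemma:normal-bundle-aux}. The only difference is presentational --- you derive $K_{R_1'}\cdot L_1'=-1$ from explicit pullback and adjunction computations (using $R_0\cdot F_\xi=0$), where the paper cites crepancy of the minimal resolution of the nodal surface; both give the same numbers.
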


\begin{proof}
Let $f\colon X_1'\to X_1$ be the blow up of the preimage on $X_1$ of the point $P_{\Pi}$.
Put~\mbox{$f'=f_{P_0}\circ f$},
so that $f'\colon X_1'\to X_0$ is the blow up of the points $P_0$ and $P_{\Pi}$.
Let $R_1'$ be the proper transform of $R_0$ on the threefold $X_1'$.
Then the surface $R_1'$ is smooth.
Note that the morphism
$$f'\vert_{R_1'}\colon R_1'\to R_0$$
is the blow up of nodes
of $R_0$,
and thus it is crepant.
Let $L_1'$ be the proper transform of $F_\xi$ (or $L_1$) on $X_1'$.
Let $E_1'$ be the exceptional divisor of $f'$ over the point $P_0$ (i.~e. the proper transform
on $X_1'$ of the exceptional divisor of $f_{P_0}$), and
$E'$ be the exceptional divisor of $f'$ over the point $P_{\Pi}$.

One has $K_{R_1'}\cdot L_1'=-1$, so that $L_1'^2=-1$, and the normal bundle
$\mathcal{N}_{L_1'/R_1'}\cong\mathcal{O}_{\mathbb{P}^1}(-1)$.
On the other hand, we know that
$$
K_{X_1'}\cdot L_1'=\big(f'^*K_{X_0}+E_1'+2E'\big)\cdot L_1'=2,
$$
which gives
$$
\mathcal{N}_{L_1'/X_1'}\cong \mathcal{O}_{\mathbb{P}^1}(-1)\oplus\mathcal{O}_{\mathbb{P}^1}(-3).
$$
by Lemma~\ref{lemma:stupid}.
Now the assertion follows from Lemma~\ref{lemma:normal-bundle-aux}.
\end{proof}

Let $f_1\colon\bar{X}_1\to X_1$ be the blow up of the curve $L_1$, and let $\bar{G}_1$ be its exceptional surface.
By Lemma~\ref{lemma:normal-bundle-0-2} we have $\bar{G}_1\cong\mathbb{F}_2$.
Denote by $\bar{L}_1$ the unique smooth rational
curve in $\bar{G}_1$ such that~\mbox{$\bar{L}_1^2=-2$}.

\begin{lemma}\label{lemma:0-2-to-1-1}
One has
$\mathcal{N}_{\bar{L}_1/\bar{X}_1}\cong\mathcal{O}_{\mathbb{P}^1}(-1)\oplus\mathcal{O}_{\mathbb{P}^1}(-1)$.
\end{lemma}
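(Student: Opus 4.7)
The plan is to adapt the strategy used in the proof of Lemma~\ref{lemma:normal-bundle-0-2}: first compute the total degree of $\mathcal{N}_{\bar{L}_1/\bar{X}_1}$ via intersection theory, then pin down the splitting by exhibiting an auxiliary smooth surface through $\bar{L}_1$ and invoking Lemma~\ref{lemma:stupid}.

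First I would show $-K_{\bar{X}_1}\cdot\bar{L}_1=0$. Since $f_1$ blows up a smooth curve in a smooth threefold, $K_{\bar{X}_1}=f_1^*K_{X_1}+\bar{G}_1$, and the pullback term pairs to zero with the $f_1$-exceptional curve $\bar{L}_1$. Writing $\bar{G}_1\cong\mathbb{F}_2$ with negative section $\bar{L}_1$ ($\bar{L}_1^2=-2$), fibers $F$, and positive section $L_+\sim\bar{L}_1+2F$, the standard blow-up formula gives $\mathcal{O}_{\bar{G}_1}(\bar{G}_1)\sim-L_+$; consequently $\bar{G}_1\cdot\bar{L}_1=0$. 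Combined with $\mathcal{N}_{\bar{L}_1/\bar{G}_1}=\mathcal{O}_{\mathbb{P}^1}(-2)$ (the self-intersection of $\bar{L}_1$ on $\mathbb{F}_2$) and $\mathcal{N}_{\bar{G}_1/\bar{X}_1}|_{\bar{L}_1}=\mathcal{O}_{\mathbb{P}^1}$ (of degree $\bar{G}_1\cdot\bar{L}_1=0$), the short exact sequence
\[
0\to\mathcal{O}_{\mathbb{P}^1}(-2)\to\mathcal{N}_{\bar{L}_1/\bar{X}_1}\to\mathcal{O}_{\mathbb{P}^1}\to 0
\]
leaves two possibilities: $\mathcal{N}_{\bar{L}_1/\bar{X}_1}\cong\mathcal{O}(-1)\oplus\mathcal{O}(-1)$ or $\mathcal{N}_{\bar{L}_1/\bar{X}_1}\cong\mathcal{O}\oplus\mathcal{O}(-2)$.

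To rule out the second option I would exhibit a smooth surface $\bar{R}\subset\bar{X}_1$ containing $\bar{L}_1$ with $\mathcal{N}_{\bar{L}_1/\bar{R}}\cong\mathcal{O}_{\mathbb{P}^1}(-1)$, and then apply Lemma~\ref{lemma:stupid} with $r=-1$, $s=0$ (the hypothesis holds as the equality $2r=s-2$). The candidate $\bar{R}$ is the proper transform on $\bar{X}_1$ of the auxiliary surface $R_1\subset X_1$ used in the proof of Lemma~\ref{lemma:normal-bundle-0-2}, namely the proper transform on $X_1$ of $R_0=\tau^{-1}(\Pi)$ for a general plane $\Pi\supset L_\xi$, which has a single remaining node at $P_\Pi'\in L_1$. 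A local computation near $P_\Pi'$ based on the equations \eqref{eq:q2}--\eqref{eq:q4} shows that blowing up $L_1$ resolves the node of $R_1$, so $\bar{R}$ is smooth, and that $\bar{R}\cap\bar{G}_1$ decomposes as the sum of the fiber $F_{P_\Pi'}$ over $P_\Pi'$ and a section $\sigma$ of the ruling $\bar{G}_1\to L_1$. For general $\Pi$ the sub-line-bundle $\mathcal{N}_{L_1/R_1}\hookrightarrow\mathcal{N}_{L_1/X_1}=\mathcal{O}\oplus\mathcal{O}(-2)$ coincides with the $\mathcal{O}$-summand (the unique saturated sub-line-bundle of maximal degree), whence $\sigma$ is the negative section, i.e.\ $\sigma=\bar{L}_1$.

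Once $\bar{L}_1\subset\bar{R}$ is established, the divisorial identity $\bar{G}_1|_{\bar{R}}=\bar{L}_1+F_{P_\Pi'}$ on $\bar{R}$, combined with $\bar{L}_1\cdot\bar{G}_1=0$ in $\bar{X}_1$ and $\bar{L}_1\cdot F_{P_\Pi'}=1$ (transverse intersection in $\bar{G}_1$), forces $\bar{L}_1^2=-1$ on $\bar{R}$, so $\mathcal{N}_{\bar{L}_1/\bar{R}}\cong\mathcal{O}_{\mathbb{P}^1}(-1)$. Lemma~\ref{lemma:stupid} then yields $\mathcal{N}_{\bar{L}_1/\bar{X}_1}\cong\mathcal{O}(-1)\oplus\mathcal{O}(-1)$. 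The main obstacle is the identification of $\sigma$ with the negative section of $\mathbb{F}_2$: verifying this requires carefully tracking the saturated sub-line-bundle $\mathcal{N}_{L_1/R_1}$ inside $\mathcal{N}_{L_1/X_1}$ across the node $P_\Pi'$ of $R_1$, using the local behaviour of the general plane $\Pi\supset L_\xi$ dictated by the equations above.
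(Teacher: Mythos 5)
Your skeleton coincides with the paper's: exhibit a smooth surface through $\bar{L}_1$ whose normal bundle along $\bar{L}_1$ is $\mathcal{O}_{\mathbb{P}^1}(-1)$, check $K_{\bar{X}_1}\cdot\bar{L}_1=0$, and conclude by Lemma~\ref{lemma:stupid}. The degree count and the reduction to the two possibilities $\mathcal{O}(-1)\oplus\mathcal{O}(-1)$ and $\mathcal{O}\oplus\mathcal{O}(-2)$ are fine, although your justification of $f_1^*K_{X_1}\cdot\bar{L}_1=0$ is off: $\bar{L}_1$ is not contracted by $f_1$, so the pullback does not pair to zero ``because the curve is exceptional''; the correct reason is the projection formula together with $K_{X_1}\cdot L_1=0$, which follows from Lemma~\ref{lemma:normal-bundle-0-2}. (Also, the equations \eqref{eq:q2}--\eqref{eq:q4} you cite belong to the tacnode analysis of Section~3; Construction~II works with \eqref{eq:quartic-for-flops-details} and \eqref{eq:quadric-II}.)

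The genuine gap is exactly the step you flag at the end: showing that the section component $\sigma$ of $\bar{R}\cap\bar{G}_1$ is the negative section $\bar{L}_1$ of $\bar{G}_1\cong\mathbb{F}_2$ rather than a section of class $\bar{L}_1+kF$ with $k\geqslant 2$. Your stated reason --- that the saturation of $\mathcal{N}_{L_1/R_1}$ inside $\mathcal{N}_{L_1/X_1}\cong\mathcal{O}\oplus\mathcal{O}(-2)$ is ``the unique saturated sub-line-bundle of maximal degree'' --- merely describes the $\mathcal{O}$-summand; to invoke maximality you would already need to know that this saturation has degree $>-2$, which is precisely what is at stake. A local computation near the node $P_\Pi$ cannot decide this: locally all saturated sub-bundles look alike, and which section of $\mathbb{F}_2$ one lands on is a global question (the degree of the saturation). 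The paper supplies the missing global input by restricting the ramification formula
$$\bar{R}_1+\bar{E}_0+\bar{E}_1+\bar{G}_1\sim_{\Q}-\tfrac12\big(f_{O_X}\circ f_0\circ f_1\big)^*K_X$$
to $\bar{G}_1$, obtaining $\bar{R}_1\vert_{\bar{G}_1}\sim_{\Q}\bar{L}_1+l$; since this class decomposes as a fiber plus a section only if the section lies in the class of $\bar{L}_1$, and $\bar{L}_1$ is the unique effective divisor in that class, the identification $\sigma=\bar{L}_1$ follows. Without this (or an equivalent computation of $\bar{R}\cdot\bar{L}_1$, or of the degree of the saturated sub-bundle), the case $\mathcal{O}\oplus\mathcal{O}(-2)$ is not excluded. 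Once $\bar{L}_1\subset\bar{R}$ is known, your computation of $\mathcal{N}_{\bar{L}_1/\bar{R}}\cong\mathcal{O}_{\mathbb{P}^1}(-1)$ via $\bar{G}_1\vert_{\bar{R}}=\bar{L}_1+F_{P_\Pi}$ is a valid alternative to the paper's use of $K_{\bar{R}_1}\cdot\bar{L}_1=-1$ from the minimal-resolution property.
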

\begin{proof}
Let $\bar{R}_1$ and $\bar{E}_1$ be the proper transforms on $\bar{X}_1$ of the surfaces $R$ and $E_1$, respectively.
Let
$\bar{E}_0$ be the proper transform of the exceptional divisor of
the blow up~\mbox{$f_{O_X}\colon X_0\to X$} on $\bar{X}_1$.
Then
$$
\bar{R}_1\sim\big(f_{O_X}\circ f_0\circ f_1\big)^*(R)-\bar{E}_0-\bar{E}_1-\bar{G}_1,
$$
because $R$ has nodes at the points $O_X$ and $f_{O_X}(P_0)$,
and it is smooth at the general point of the curve $f_{O_X}(F_\xi)$.
Denote by $l$ the class of the fiber of the natural
projection~\mbox{$\bar{G}_1\to L_1\cong\P^1$}
in $\mathrm{Pic}(\bar{G}_1)$. Then
$$
\bar{E}_0\vert_{\bar{G}_1}\sim \bar{E}_1\vert_{\bar{G}_1}\sim l.
$$
Moreover, we have $\bar{G}_1\vert_{\bar{G}_1}\sim-\bar{L}_1-2l$.
Therefore, one has
$$
\bar{R}_1\vert_{\bar{G}_1}=\Big(\big(f_{O_X}\circ f_0\circ f_1\big)^*(R)-\bar{E}_0-\bar{E}_1-\bar{G}_1\Big)\Big\vert_{\bar{G}_1}
\sim \bar{L}_1+\big(f_{O_X}\circ f_0\circ f_1\big)^*(R)\vert_{\bar{G}_1}\sim \bar{L}_1+l.
$$

On the other hand, we know that the proper transform of
$R$ on $X_1$ is a del Pezzo surface with a unique node on
the curve $L_1$. Thus, we conclude that
$\bar{R}_1\vert_{\bar{G}_1}=L+T$, where $L$ is a fiber of the
projection $\bar{G}_1\to L_1$, and $T$ is some effective one-cycle
such that $T\sim_{\Q} \bar{L}_1$.
Since $\bar{L}_1$ is an irreducible curve and $\bar{L}_1^2=-2<0$,
this immediately implies that $T=\bar{L}_1$, because $T$ is an effective one-cycle.

Since $f_0\circ f_1\vert_{\bar{R}_1}\colon \bar{R}_1\to R_0$
is the minimal resolution of singularities of a nodal del Pezzo surface
$R_0$, we have $K_{\bar{R}_1}\cdot\bar{L}_1=-1$.
Therefore, one has
$$
\mathcal{N}_{\bar{L}_1/\bar{R}_1}\cong\mathcal{O}_{\P^1}(-1).
$$
Finally, we have $K_{\bar{X}_1}\cdot\bar{L}_1=0$,
so that the assertion follows by Lemma~\ref{lemma:stupid}.
\end{proof}

By Lemma~\ref{lemma:0-2-to-1-1}
one can make an Atiyah flop $\psi\colon\bar{X}_1\dasharrow\bar{X}_2$
in the curve $\bar{L}_1$.
Let $\bar{G}_2$ be the proper transform of
the surface $\bar{G}_1$ on the threefold $\bar{X}_2$.
\begin{lemma}
\label{lemma:F-2}
One has $\bar{G}_2\cong\mathbb{F}_2$.
\end{lemma}

\begin{proof}
Denote by $\omega_1\colon W\to \bar{X}_1$ the blow up of $\bar{X}_1$
along the curve $\bar{L}_1$, so that there is a commutative
diagram
$$
\xymatrix{
&W\ar@{->}[rd]^{\omega_1}\ar@{->}[ld]_{\omega_2}&\\
\bar{X}_2&& \bar{X}_1\ar@{-->}[ll]_{\psi}
}
$$
Let $G_W$ be the exceptional divisor of $\omega_1$. Recall that $G_W\cong\P^1\times\P^1$.
Denote by $l_1$ the class of the ruling of $G_W$ that is mapped surjectively onto $\bar{L}_1$,
and denote by $l_2$ the class of the ruling of $G_W$ that is contracted by $\omega_1$ to a point in $\bar{L}_1$.
Let $G_{1,W}$ be the proper transforms on $W$ of the surface $\bar{G}_1$, respectively.
Then $G_{1,W}\cong\bar{G}_1\cong\mathbb{F}_2$ and
$$
G_{1,W}\sim\omega_1^*(\bar{G}_1)-G_W.
$$
Since $G_W\vert_{G_W}\sim -l_1-l_2$ and $\bar{G}_1\cdot\bar{L}_1=0$, this gives
$$
G_{1,W}\vert_{G_W}\sim\Big(\omega_1^*\bar{G}_1-G_W\Big)\vert_{G_W}\sim l_1+l_2.
$$
Thus, the morphism $\omega_2$ induces an isomorphism $G_{1,W}\cong\bar{G}_2$, so that $\bar{G}_2\cong\mathbb{F}_2$.
\end{proof}

We have the following commutative diagram:
\begin{equation}
\label{equation:small-pagoda}
\xymatrix{
\bar{X}_2\ar@{->}[d]_{f_2}&&\bar{X}_1\ar@{->}[d]^{f_1}\ar@{-->}[ll]_{\psi}\\
X_2&&X_1\ar@{-->}[ll]_{\chi}.}
\end{equation}
Note that $\chi$ is a flop, and $L_1$
is a $(-2)$-curve of width $2$ in the notation
of~\cite[Definition~5.3]{Reid1983}.
The diagram~\eqref{equation:small-pagoda} is an example
of a \emph{pagoda} described in~\cite[5.7]{Reid1983}.

Let $f_{\xi}\colon {U_2}\to\mathbb{P}^2$ be the blow up of the point $\xi$,
and $p_2\colon X_2\dasharrow {U_2}$ be the corresponding
rational map. Put $p_1=p_2\circ\chi$.
Let $Z\cong\mathbb{P}^1$ be the exceptional divisor of the blow up~$f_{\xi}$, and
$C_2$ be the proper transform of the curve $C$ on $U_2$.
By Proposition~\ref{proposition:C6}(ii) the intersection $C_2\cap Z$
consists of a single point, and $C_2$ is smooth at this point.
Let $E_2$ be the proper transform of the divisor
$E_1$ on the threefold $X_2$.

Now we will prove a result that is identical to Lemma~\ref{lemma:regular-1}
(but takes place in the setup of our current Construction~II).

\begin{lemma}\label{lemma:regular-2}
The rational map $p_2$ is a morphism, and $p_2(E_2)=Z$.
The fiber of $p_2$ over the point $C_2\cap Z$ is a union of two smooth rational
curves that intersect transversally at one point.
All other fibers of $p_2$ over $Z$ are smooth, so that $C_2$ is the degeneration curve of the conic bundle $p_2$.
\end{lemma}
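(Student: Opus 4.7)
The plan is to mimic the proof of Lemma~\ref{lemma:regular-1}, replacing the pair of disjoint Atiyah flops by the single pagoda described above. First I would produce a smooth common resolution $W$ of the birational map $\chi$: let $\omega\colon W\to \bar X_1$ be the blowup of the $(-1,-1)$-curve $\bar L_1\subset\bar G_1$, so that $\omega$ factors through $\bar X_2$ and $W$ dominates both $X_1$ (hence $X_0$) and $X_2$. On $W$ there are three exceptional divisors over $X_0$: the proper transforms $E_W$ of $E_1$ and $G_W$ of $\bar G_1$, together with the $\omega$-exceptional divisor $F_W\cong\mathbb{P}^1\times\mathbb{P}^1$. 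Let $\mathcal{H}$ denote the pencil on $U_2$ of proper transforms of lines in $\mathbb{P}^2$ through $\xi$; then its proper transform $\mathcal{H}_W$ on $W$ is obtained from $(\pi\circ f_{P_0}\circ f_1\circ\omega)^*\mathcal{O}_{\mathbb{P}^2}(1)$ by subtracting the appropriate multiplicities along $E_W$, $G_W$, and $F_W$.

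The heart of the argument is to show that $\mathcal{H}_W$ is base point free. Using the equation~\eqref{eq:quadric-II}, I would identify $E_1$ with the smooth quadric $w^2=xz+y^2$ in $\mathbb{P}^3$ with coordinates $x,y,z,w$, and the restriction of $p_1$ to $E_1$ with the linear projection $[x:y:z:w]\mapsto[x:y]$ from the line $\{x=y=0\}$. This line meets $E_1$ only at the single point $[0:0:1:0]=L_1\cap E_1$, and does so tangentially, with intersection multiplicity two. Consequently two successive blowups are required to resolve this projection, and these are exactly supplied by $f_1$ followed by $\omega$. Therefore any base point of $\mathcal{H}_W$ on $E_W$ must lie over $[0:0:1:0]$, i.e.\ in $G_W\cup F_W$, and I would dispose of it by restricting $\mathcal{H}_W$ to $G_W$ and $F_W$; on each of these the restriction is a ruling class that cannot be common to two general members of the pencil, because the corresponding divisors on $X_1$ are transverse at a general point of $L_1$, exactly as in Lemma~\ref{lemma:regular-1}.

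Once base point freeness is established, $\mathcal{H}_W$ descends to a base point free pencil on $X_2$ because its restrictions to $G_W$ and $F_W$ consist of fibers of the contractions $W\to X_2$; this forces $p_2$ to be a morphism, with $p_2(E_2)=Z$. For the fiber analysis I would use the explicit form of the projection $E_1\dashrightarrow Z$: over a general point $[a:b]\in Z$ the fiber is the smooth $(1,1)$-conic $w^2-y^2=(a/b)yz$ on $E_1$, while over $[0:1]$ it degenerates to the pair of lines $w=\pm y$ meeting transversally at $[0:0:1:0]$. The degenerate fiber corresponds precisely to the unique intersection point $C_2\cap Z$ provided by Proposition~\ref{proposition:C6}(ii); under the contraction to $X_2$ it becomes a union of two smooth rational curves meeting transversally at one point, as claimed. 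The main obstacle will be the tangency at $[0:0:1:0]$: a single Atiyah flop would not suffice, and one must use the full two-step pagoda together with careful multiplicity accounting along $G_W$ and $F_W$ to clear the base points of $\mathcal{H}_W$ near this tangent point.
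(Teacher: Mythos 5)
Your overall strategy coincides with the paper's: resolve $\chi$ on a common model $W$ obtained by blowing up $\bar L_1$, prove that the proper transform $\mathcal{H}_W$ of the pencil of lines through $\xi$ is base point free by restricting it to the exceptional divisors and by identifying $p_1\vert_{E_1}$ with the projection of the quadric $xz+y^2=w^2$ from the line $x=y=0$ tangent to it at $[0:0:1:0]$, then descend to $X_2$ and read off the fibers over $Z$ from the explicit projection formula. The concluding fiber analysis (smooth $(1,1)$-conics over general points of $Z$, a pair of lines meeting transversally over the single point $C_2\cap Z$ supplied by Proposition~\ref{proposition:C6}(ii)) is correct and is exactly how the paper finishes.

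The one genuine problem is the justification you give at the crucial base-point-freeness step. You assert that the rulings cut out by $\mathcal{H}_W$ on $G_W$ and $F_W$ must vary \emph{because the corresponding divisors on $X_1$ are transverse at a general point of $L_1$, exactly as in Lemma~\ref{lemma:regular-1}}. They are not transverse: in Construction~II the line $L_\xi$ lies on the branch quartic $S$, so the preimages in $X$ of two planes through $L_\xi$ meet along $F_\xi$ with multiplicity $2$, i.e.\ the members of the pencil are tangent to each other along the whole fiber. This is precisely why every member of $\mathcal{H}_{\bar X_1}$ still contains $\bar L_1$ after the first blow up and why the pagoda has two steps. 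The correct arguments are also different on the two divisors: on the $\omega$-exceptional divisor (your $F_W$) one uses that the tangency order is exactly $2$, so the members of $\mathcal{H}_W$ cannot all contain one and the same ruling, as that would force tangency of order at least $3$; on the proper transform of $\bar G_1\cong\mathbb{F}_2$ (your $G_W$) the rulings cut out by the members are the fibers over the varying second nodes $P_\Pi$ of the surfaces $R_0$, so they vary with the plane $\Pi$. Your last sentence does flag the tangency as the main obstacle, but this contradicts the transversality claim your argument actually relies on, and as written the proof of base-point-freeness does not go through.
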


\begin{proof}
Denote by $\omega_1\colon W\to \bar{X}_1$ the blow up of $\bar{X}_1$
along the curve $\bar{L}_1$, so that there is a commutative
diagram
$$
\xymatrix{
&W\ar@{->}[rd]^{\omega_1}\ar@{->}[ld]_{\omega_2}&\\
\bar{X}_2&& \bar{X}_1\ar@{-->}[ll]_{\psi}
}
$$
Let $G_W$ be the exceptional divisor of $\omega_1$. Recall that $G_W\cong\P^1\times\P^1$.
Denote by $l_1$ the class of the ruling of $G_W$ that is mapped surjectively onto $\bar{L}_1$.
Let $E_W^P$ and $G_{1,W}$ be the proper transforms on $W$
of the surfaces $E_1$ and $\bar{G}_1$, respectively.

As in the proof of Lemma~\ref{lemma:regular-1}, let
$\mathcal{H}$ be the pencil of the curves that are proper transforms
on $U_2$ of lines in $\P^2$ passing through the point $\xi$.
To show that
the rational map $p_2$ is a morphism,
it is enough to check that the proper transform
$\mathcal{H}_{X_2}$ of the linear system $\mathcal{H}$ on $X_2$
is base point free.
Let us show first that its proper transform $\mathcal{H}_W$ on the threefold $W$ is base point free.

By construction, we know that all base points of the pencil
$\mathcal{H}_W$ are contained in the
union~\mbox{$G_W\cup G_{1,W}\cup E_W^P$}.
One has
$$
\mathcal{H}_W\sim \left(\pi\circ f_{P_0}\circ f_1\circ\omega_1\right)^*\mathcal{O}_{\mathbb{P}^2}(1)-G_W -G_{1,W}-E_W^P.
$$
This gives
$\mathcal{H}_W\vert_{G_W}\sim l_1$.
Therefore, either two different elements of the pencil
$\mathcal{H}_W$ do not have intersection points in $G_W$,
or all of them contain one and the same ruling of class~$l_1$.
The latter case is impossible; indeed, the proper transforms of elements
of $\mathcal{H}$ on~$\bar{X}_1$ are tangent to each other along $\bar{L}_1$ with multiplicity~$2$
since $\tau$ is a double cover and the proper transforms
of the elements of $\mathcal{H}$ on $\P^3$ are planes
passing through the line $L_{\xi}$.
There\-fore,~$\mathcal{H}_W$ has no base points in $G_W$.

Let $t_1$ be the class of the ruling of $G_{1,W}\cong\mathbb{F}_2$.
Then
$$
\mathcal{H}_W\vert_{G_{1,W}}\sim t_1,
$$
and the rulings of $G_{1,W}$ cut out by the members of the pencil $\mathcal{H}_W$ vary (cf. the proof of Lemma~\ref{lemma:0-2-to-1-1}).
Therefore, $\mathcal{H}_W$ has no base points in $G_{1,W}$.

Let us check that $\mathcal{H}_W$ has no base points in $E_W^P$
by analyzing the behavior of the rational map $p_1$ along the surface $E_1$.
Using equation~\eqref{eq:quadric-II}  and writing down the equation
of $X$, we see that the surface $E_1$ is identified with a quadric surface
given by
$$
xz+y^2=w^2
$$
in $\mathbb{P}^3$ with homogeneous coordinates $x$, $y$, $z$, and $w$.
Note that $x$ and $y$ can be interpreted as homogeneous coordinates on $Z$.
The closure of the image of $E_1$ with respect to the rational map $p_1$
is the curve $Z$. The restriction $p_{E_1}$ of $p_1$ to $E_1$
is given by the formula~\eqref{equation:linear-projection-1}.
Therefore, $p_{E_1}$ is a projection from
the line $x=y=0$, which is tangent to $E_1$
at the point~\mbox{$[0:0:1:0]$}.
Note that this is the point
$P_1=L_1\cap E_1$.
This implies that $\mathcal{H}_W$ has no base points in $E_W^P$ outside
the curves contracted to $P_1$ by $f_1\circ\omega_1$.
But these are exactly the curves $G_{1,W}\cap E_W^P$ and $G_W\cap E_W^P$.
Since we already know that $\mathcal{H}_W$ has no base points in $G_{1,W}$ and $G_W$,
we conclude that $\mathcal{H}_W$ is base point free.
In particular, the rational map~\mbox{$p_2\circ f_2\circ\omega_2$}
is a morphism.

Since $\mathcal{H}_W\vert_{G_W}\sim l_1$, the proper transform $\mathcal{H}_{\bar{X}_2}$
of the pencil $\mathcal{H}_{X_2}$ on the threefold $\bar{X}_2$ is also base point free.
Let $t_2$ be the class of the ruling of $\bar{G}_2\cong\mathbb{F}_2$.
Then
$$
\mathcal{H}_{\bar{X}_2}\vert_{\bar{G}_2}\sim t_2,
$$
so that the restriction of $\mathcal{H}_{\bar{X}_2}$ to $\bar{G}_2$
lies in the fibers of the morphism~$f_2$.
Therefore, the pencil $\mathcal{H}_{X_2}$  is also base point free,
so that $p_2$ is a morphism.

The remaining assertions of the lemma
follow from~\eqref{equation:linear-projection-1}.
\end{proof}

Putting everything together, we obtain a commutative diagram
$$
\xymatrix{
&W\ar@{->}[rd]^{\omega_1}\ar@{->}[ld]_{\omega_2}&\\
\bar{X}_2\ar@{->}[d]_{f_2}&&\bar{X}_1\ar@{->}[d]^{f_1}\ar@{-->}[ll]_{\psi}\\
 X_2\ar@{->}[dd]_{p_2}&&X_1\ar@{-->}[ddll]^{p_1}\ar@{->}[drr]^{f_{P_0}}\ar@{-->}[ll]_{\chi}\\
&&&&X_0\ar@{->}[d]^{\pi}\\
U_2\ar@{->}[rrrr]^{f_\xi}&&&&\mathbb{P}^2}
$$

\subsection*{Construction III}

Suppose that $S$ is singular at exactly three points of the line $L_\xi$,
namely $O_S$, $P_S$, and some other point $Q_S$ different from $O_S$ and $P_S$;
in particular, this implies that $L_\xi$ is contained in $S$. Here
the situation is described by case~(i) of Proposition~\ref{proposition:C6}.
This happens if and only if in equation~\eqref{eq:quartic-for-flops-details}
one has~\mbox{$\alpha=0$},
and the linear forms~$q_2^{(1)}(x,y)$
and $q_3^{(1)}(x,y)$ are proportional.
In particular, we can assume that $q_2(x,y,z)$
is given by equation~\eqref{eq:quadric-II}.

Denote by $P_0$ and $Q_0$
the preimages of the points $P_S$ and $Q_S$ on $X_0$.
Note that $F_\xi$ is a smooth rational curve passing through $P_0$ and $Q_0$.
The threefold $X_0$ has nodes at $P_0$ and~$Q_0$,
and is smooth elsewhere along $F_\xi$.

Let $f\colon X_1\to X_0$ be
the blow up of the points $P_0$ and $Q_0$.
Denote by $E_1^P$ and $E_1^Q$ be the exceptional divisors of $f$
over the points $P_0$ and $Q_0$, respectively.
One has
$$
E_1^P\cong E_1^Q\cong\mathbb{P}^1\times\mathbb{P}^1.
$$
Denote by $L_1$ the proper transform of $F_\xi$ on $X_1$.
The threefold $X_1$ is smooth along $L_1$.

\begin{lemma}\label{lemma:normal-bundle-1-2}
One has
$\mathcal{N}_{L_1/X_1}\cong\mathcal{O}_{\mathbb{P}^1}(-1)\oplus\mathcal{O}_{\mathbb{P}^1}(-2)$.
\end{lemma}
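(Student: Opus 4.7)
The plan is to adapt the strategy used to prove Lemma~\ref{lemma:normal-bundle-0-2}. A pleasant feature of the present situation is that the two nodes of a general plane section of $S$ that lie away from $O_S$ coincide with $P_S$ and $Q_S$, i.e.\ with points we are already blowing up, so no auxiliary blow up (as was needed in Construction~II) is required.

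First I would take a general plane $\Pi\subset\mathbb{P}^3$ through $L_\xi$ and set $R=\tau^{-1}(\Pi)$. One has $\Pi\cap S=L_\xi\cup C_\Pi$ with $C_\Pi$ a cubic. Because $O_S$, $P_S$, $Q_S$ are nodes of $S$ lying on $L_\xi$, every plane section of $S$ containing one of them is singular at that point; this forces $C_\Pi$ to pass through each of the three, and B\'ezout then gives $L_\xi\cap C_\Pi=\{O_S,P_S,Q_S\}$ with all three intersections transversal. For a general $\Pi$ the cubic $C_\Pi$ is smooth at these points, hence $\Pi\cap S$ is a nodal curve with exactly three nodes. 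Consequently the proper transform $R_0$ of $R$ on $X_0$ has exactly two singularities along $F_\xi$, namely nodes at $P_0$ and $Q_0$, and is smooth elsewhere on $F_\xi$.

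Since $f\colon X_1\to X_0$ blows up the two nodes $P_0$, $Q_0$ of $R_0$, the proper transform $R_1$ of $R_0$ is smooth, and $f|_{R_1}\colon R_1\to R_0$ is a crepant resolution (the exceptional curve over each node is a $(1,1)$-curve on the blown-up quadric, giving a $(-2)$-curve on $R_1$). Because $L_1$ is the proper transform of $F_\xi$, the projection formula gives
$$
K_{R_1}\cdot L_1=K_{R_0}\cdot F_\xi=K_{X_0}\cdot F_\xi=-1,
$$
where the last equality (used already in Construction~II) follows from the fact that the scheme-theoretic fiber of $\pi$ over $\xi$ equals $2F_\xi$. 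Adjunction on the smooth surface $R_1$ then yields $L_1^2=-1$, so $\mathcal{N}_{L_1/R_1}\cong\mathcal{O}_{\mathbb{P}^1}(-1)$. Using the discrepancy formula for the blowup of a $3$-fold node, $K_{X_1}=f^*K_{X_0}+E_1^P+E_1^Q$, together with the fact that the smooth curve $F_\xi$ has a unique tangent direction at each node, so its proper transform $L_1$ meets each exceptional quadric transversally in one point, I obtain
$$
K_{X_1}\cdot L_1=K_{X_0}\cdot F_\xi+E_1^P\cdot L_1+E_1^Q\cdot L_1=-1+1+1=1.
$$
Applying Lemma~\ref{lemma:stupid} with $r=-1$ and $s=-K_{X_1}\cdot L_1=-1$ (the hypothesis $2r\geqslant s-2$ reading $-2\geqslant -3$) gives
$$
\mathcal{N}_{L_1/X_1}\cong\mathcal{O}_{\mathbb{P}^1}(-1)\oplus\mathcal{O}_{\mathbb{P}^1}(s-r-2)=\mathcal{O}_{\mathbb{P}^1}(-1)\oplus\mathcal{O}_{\mathbb{P}^1}(-2).
$$

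The one point requiring genuine care is the local claim that $R_0$ has no singularity on $F_\xi$ outside $P_0,Q_0$; in particular, one must rule out an extra ``spurious'' node $P_\Pi$ of the kind that appeared in Construction~II. This is precisely what the defining hypothesis of Construction~III (proportionality of $q_2^{(1)}$ and $q_3^{(1)}$, equivalently the presence of the third singular point $Q_S$ of $S$ on $L_\xi$) is arranged to guarantee: it forces the third intersection of $C_\Pi$ with $L_\xi$ to coincide with $Q_S$, so that no free intersection point is available. Checking this in terms of the expansion~\eqref{eq:quartic-for-flops-details} proceeds by a straightforward local computation analogous to those carried out in the proof of Proposition~\ref{proposition:C6}.
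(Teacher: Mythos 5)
Your proof is correct and follows essentially the same route as the paper: a general plane section $R$ through $L_\xi$, the observation that its proper transform $R_0$ has exactly the two nodes $P_0,Q_0$ along $F_\xi$, crepancy of the resolution $R_1\to R_0$ giving $\mathcal{N}_{L_1/R_1}\cong\mathcal{O}_{\mathbb{P}^1}(-1)$, the discrepancy computation $K_{X_1}\cdot L_1=1$, and Lemma~\ref{lemma:stupid}. The only difference is cosmetic: you justify the local structure of $R_0$ along $F_\xi$ via the decomposition $\Pi\cap S=L_\xi\cup C_\Pi$ and B\'ezout, where the paper reads it off directly from equation~\eqref{eq:quartic-for-flops-details}.
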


\begin{proof}
As in the proof of Lemmas~\ref{lemma:normal-bundle-1-1} and~\ref{lemma:normal-bundle-0-2},
let $\Pi\subset\mathbb{P}^3$ be a general plane containing the line $L_{\xi}$,
let $R$ be the preimage of $\Pi$ with respect to the double cover $\tau$,
and let $R_0$ be the proper transform of $R$ on the threefold $X_0$.
Since the points $O_S$, $P_S$ and $Q_S$ are nodes of the surface $S$,
the intersection $\Pi\cap S$ consists of a line $L_{\xi}$ and a smooth cubic curve that intersects the line $L_{\xi}$ transversally at the points $O_S$, $P_S$ and $Q_S$.
Therefore $R_0$ has nodes at the points $P_0$ and $Q_0$, and is smooth elsewhere.
One has
$$
K_{R_0}\cdot F_\xi=K_{X_0}\cdot F_\xi=-1.
$$

Let $R_1$ be the proper transform of $R_0$ on the threefold $X_1$.
Then the surface $R_1$ is smooth.
The morphism $f\vert_{R_1}\colon R_1\to R_0$ is the blow up of nodes of $R_0$, and thus it is crepant.
One has $K_{R_1}\cdot L_1=-1$, so that $L_1^2=-1$ and
$$
\mathcal{N}_{L_1/R_1}\cong\mathcal{O}_{\mathbb{P}^1}(-1).
$$
On the other hand, we know that
$$
K_{X_1}\cdot L_1=\big(f^*K_{X_0}+E_1^P+E_1^Q\big)\cdot L_1=1,
$$
which implies the assertion by Lemma~\ref{lemma:stupid}.
\end{proof}

By Lemma~\ref{lemma:normal-bundle-1-2} and \cite[\S2]{Francia},
there exists an antiflip $\sigma\colon X_1\dasharrow\check{X}_1$ in the curve $L_1$.
The inverse map $\sigma^{-1}$ is usually called a Francia flip.

Let $f_{\xi}\colon U_1\to \mathbb{P}^2$ be the blow up of the point $\xi$.
Let $Z_1\cong\mathbb{P}^1$
be the exceptional divisor of the blow up $f_{\xi}$, and
$C_1$ be the proper transform of the curve $C$ on $U_1$.
By Proposition~\ref{proposition:C6}(i), the intersection $C_1\cap Z_1$
consists of a single point $\xi_1$, and $C_1$ has a node at $\xi_1$.
Let $p_1\colon X_1\dasharrow U_1$ and $\check{p}_1\colon\check{X}_1\dasharrow U_1$ be the resulting rational maps.
In fact, the rational map $\check{p}_1$ is a morphism.
To prove this, we need to recall the explicit construction of $\sigma$ from \cite[\S2]{Francia}.

Let $f_1\colon\bar{X}_1\to X_1$ be the blow up of the curve $L_1$, and let $\bar{G}_1$ be its exceptional surface.
By Lemma~\ref{lemma:normal-bundle-1-2} we have $\bar{G}_1\cong\mathbb{F}_1$. Denote by $\bar{L}_1$ the unique smooth rational
curve in $\bar{G}_1$ such that $\bar{L}_1^2=-1$.

\begin{lemma}\label{lemma:1-2-to-1-1}
One has $\mathcal{N}_{\bar{L}_1/\bar{X}_1}\cong\mathcal{O}_{\mathbb{P}^1}(-1)\oplus\mathcal{O}_{\mathbb{P}^1}(-1)$.
\end{lemma}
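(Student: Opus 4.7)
The plan is to follow the pattern of Lemma~\ref{lemma:0-2-to-1-1}: exhibit a smooth surface $\bar{R}_1\subset\bar{X}_1$ containing $\bar{L}_1$, compute $\bar{L}_1^2$ on it, and then invoke Lemma~\ref{lemma:stupid}.

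First I choose a general plane $\Pi\subset\mathbb{P}^3$ through $L_\xi$ and let $R_0\subset X_0$ be the proper transform of $\tau^{-1}(\Pi)$. Arguing as in the proof of Lemma~\ref{lemma:normal-bundle-1-2}, equation~\eqref{eq:quartic-for-flops-details} shows that $R_0$ has exactly two nodes $P_0$ and $Q_0$, both lying on $F_\xi$. Hence the proper transform $R_1\subset X_1$ is smooth, the restriction $f|_{R_1}\colon R_1\to R_0$ is a crepant minimal resolution, and $K_{R_1}\cdot L_1=K_{R_0}\cdot F_\xi=-1$. Combined with $g(L_1)=0$ this forces $L_1^2=-1$ on $R_1$, so that $\mathcal{N}_{L_1/R_1}\cong\mathcal{O}_{\mathbb{P}^1}(-1)$.

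Since $L_1$ is a smooth Cartier divisor in the smooth surface $R_1$, the blow up $f_1$ restricts to an isomorphism $\bar{R}_1\to R_1$, so $\bar{R}_1$ is smooth. Moreover, $\bar{R}_1\cap\bar{G}_1$ is the section of the $\mathbb{P}^1$-bundle $\bar{G}_1\to L_1$ determined by the inclusion $\mathcal{N}_{L_1/R_1}\hookrightarrow\mathcal{N}_{L_1/X_1}$; since $\mathrm{Hom}(\mathcal{O}_{\mathbb{P}^1}(-1),\mathcal{O}_{\mathbb{P}^1}(-2))=0$, this injection factors through the $\mathcal{O}_{\mathbb{P}^1}(-1)$ summand of $\mathcal{O}_{\mathbb{P}^1}(-1)\oplus\mathcal{O}_{\mathbb{P}^1}(-2)$, and the resulting section is precisely the negative section $\bar{L}_1\subset\bar{G}_1\cong\mathbb{F}_1$. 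Thus $\bar{L}_1\subset\bar{R}_1$, and via the isomorphism $\bar{R}_1\cong R_1$ one obtains $\mathcal{N}_{\bar{L}_1/\bar{R}_1}\cong\mathcal{O}_{\mathbb{P}^1}(-1)$, giving $r=-1$ in the notation of Lemma~\ref{lemma:stupid}.

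To finish, I compute $-K_{\bar{X}_1}\cdot\bar{L}_1$ by writing $K_{\bar{X}_1}=f_1^*K_{X_1}+\bar{G}_1$ and using that $f_1$ maps $\bar{L}_1$ isomorphically onto $L_1$. The first term contributes $K_{X_1}\cdot L_1=1$ from the proof of Lemma~\ref{lemma:normal-bundle-1-2}, while $\bar{G}_1\cdot\bar{L}_1=-1$ follows from the fact that $\mathcal{O}_{\bar{X}_1}(\bar{G}_1)|_{\bar{G}_1}$ is the tautological line bundle on the projectivized normal bundle, which restricts to $\mathcal{O}_{\mathbb{P}^1}(-1)$ along the negative section; alternatively this is a short adjunction computation on $\mathbb{F}_1$ using $K_{\bar{G}_1}\cdot\bar{L}_1=-1$ and the fact that $(f_1|_{\bar{G}_1})^*K_{X_1}|_{L_1}$ is numerically equivalent to a single fiber. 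Hence $-K_{\bar{X}_1}\cdot\bar{L}_1=0$, so that $s=0$, and since $2r=-2\geqslant s-2$ the hypothesis of Lemma~\ref{lemma:stupid} is met with equality; the lemma delivers $\mathcal{N}_{\bar{L}_1/\bar{X}_1}\cong\mathcal{O}_{\mathbb{P}^1}(-1)\oplus\mathcal{O}_{\mathbb{P}^1}(-1)$. The main obstacle is the identification $\bar{R}_1\cap\bar{G}_1=\bar{L}_1$, which translates the algebraic splitting of $\mathcal{N}_{L_1/X_1}$ into the required geometric picture; once this is in place, the remaining intersection numbers are routine.
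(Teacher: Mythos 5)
Your proof is correct and follows the paper's route: both arguments reduce to Lemma~\ref{lemma:stupid} with $r=-1$ and $s=-K_{\bar{X}_1}\cdot\bar{L}_1=0$. The only difference is that the paper reads off $\mathcal{N}_{\bar{L}_1/\bar{G}_1}\cong\mathcal{O}_{\mathbb{P}^1}(-1)$ directly from $\bar{L}_1$ being the $(-1)$-section of $\bar{G}_1\cong\mathbb{F}_1$, whereas you take the detour of identifying $\bar{L}_1$ with $\bar{R}_1\cap\bar{G}_1$ and computing inside $\bar{R}_1$ --- a correct but unnecessary step (though your explicit verification that $K_{\bar{X}_1}\cdot\bar{L}_1=0$ supplies a detail the paper leaves implicit).
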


\begin{proof}
One has
$$\mathcal{N}_{\bar{L}_1/\bar{G}_1}\cong\mathcal{O}_{\mathbb{P}^1}(-1)$$
by construction.
On the other hand, we know that~\mbox{$K_{\bar{X}_1}\cdot \bar{L}_1=0$},
which implies the assertion by Lemma~\ref{lemma:stupid}.
\end{proof}

By Lemma~\ref{lemma:1-2-to-1-1}, we can make an Atiyah flop $\psi\colon\bar{X}_1\dasharrow\hat{X}_1$ in the curve $\bar{L}_1$.
Thus, there is a commutative diagram
$$
\xymatrix{
&W\ar@{->}[rd]^{\omega_1}\ar@{->}[ld]_{\omega_2}&\\
\hat{X}_1&& \bar{X}_1\ar@{-->}[ll]_{\psi},
}
$$
where $\omega_1$ is the blow up of the curve $\bar{L}_1$, and $\omega_2$ is the contraction
of the exceptional di\-vi\-sor~$G_W\cong\mathbb{P}^1\times\mathbb{P}^1$ of $\omega_1$ onto a smooth rational curve $\hat{L}_1$
contained in the smooth locus of~$\hat{X}_1$.
Denote by $E_W^P$, $E_W^Q$, and $G_{1,W}$ the proper transforms on $W$ of the sur\-fa\-ces~$E_1^P$,~$E_1^Q$, and~$\bar{G}_1$, respectively.

Let $\hat{G}_1$ be the proper transform of $\bar{G}_1$ on $\hat{X}_1$.
Then $\hat{G}_1\cong\mathbb{P}^2$ and its normal bundle in $\hat{X}_1$ is isomorphic to $\mathcal{O}_{\mathbb{P}^2}(-2)$,
so that there exists a contraction $g_1\colon \hat{X}_1\to\check{X}_1$ of the surface $\hat{G}_1$ to a singular point $\Xi_1$ of type $\frac{1}{2}(1,1,1)$.
There is a commutative diagram
\begin{equation}
\label{equation:half-diagram}
\xymatrix{
&W\ar@{->}[rd]^{\omega_1}\ar@{->}[ld]_{\omega_2}&\\
\hat{X}_1\ar@{->}[d]^{g_1}&&\bar{X}_1\ar@{->}[d]^{f_1}\ar@{-->}[ll]_{\psi}\\
\check{X}_1\ar@{-->}[dd]^{\check{p}_1}&&X_1\ar@{-->}[ll]_{\sigma}\ar@{-->}[ddll]^{p_1}\ar@{->}[dr]^{f}&\\
&&&X_0\ar@{->}[d]^{\pi}\\
U_1\ar@{->}[rrr]^{f_\xi}&&&\mathbb{P}^2.}
\end{equation}

\begin{lemma}\label{lemma:regular-2-1}
The rational map $\check{p}_1$ is a morphism.
\end{lemma}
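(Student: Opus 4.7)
The plan is to follow closely the strategy of Lemmas~\ref{lemma:regular-1} and~\ref{lemma:regular-2}. Let $\mathcal{H}$ be the pencil on $U_1$ consisting of proper transforms of lines in $\mathbb{P}^2$ through the point $\xi$. Since the class of $H + f_\xi^*\mathcal{O}_{\mathbb{P}^2}(1)$ with $H \in \mathcal{H}$ is very ample on $U_1$, and the proper transform on $\check{X}_1$ of $|f_\xi^*\mathcal{O}_{\mathbb{P}^2}(1)|$ is evidently base point free (its members pull back from $\mathbb{P}^2$), it suffices to show that the proper transform $\mathcal{H}_{\check{X}_1}$ of $\mathcal{H}$ on $\check{X}_1$ is base point free.

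First I would pull the pencil back to $W$ via the diagram~\eqref{equation:half-diagram} and verify that the proper transform $\mathcal{H}_W$ is base point free. By construction, its base locus is contained in $G_W \cup G_{1,W} \cup E_W^P \cup E_W^Q$, and on each of these four surfaces the analysis parallels the proofs of Lemmas~\ref{lemma:regular-1} and~\ref{lemma:regular-2}. On $G_W \cong \mathbb{P}^1 \times \mathbb{P}^1$ the restriction of $\mathcal{H}_W$ is a ruling class whose generic member varies, as in Lemma~\ref{lemma:regular-2}. On $G_{1,W} \cong \mathbb{F}_1$, which plays here the role that $\mathbb{F}_2$ did in Construction~II, one again checks that the restriction cuts out varying rulings and hence has no base points. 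On $E_W^P$ and $E_W^Q$, one uses the explicit identification of each of $E_1^P$ and $E_1^Q$ with a smooth quadric in $\mathbb{P}^3$ and the formula~\eqref{equation:linear-projection-1} describing the restriction of $p_1$ as a linear projection from a line, reducing any residual base points to ones already handled on $G_W \cup G_{1,W}$.

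Given the base-point-freeness of $\mathcal{H}_W$, we obtain a morphism $W \to U_1$, which, because $\mathcal{H}_W|_{G_W}$ lies in the fibers of $\omega_2$, descends to a morphism $\hat{X}_1 \to U_1$. The main obstacle, and the genuine novelty of Construction~III, is the final descent through the divisorial contraction $g_1 \colon \hat{X}_1 \to \check{X}_1$, which contracts $\hat{G}_1 \cong \mathbb{P}^2$ to the singular point $\Xi_1$ of type $\tfrac{1}{2}(1,1,1)$. For this step it suffices to verify that $\mathcal{H}_{\hat{X}_1} \cdot \ell = 0$ for a line $\ell \subset \hat{G}_1$, so that the restriction $\mathcal{H}_{\hat{X}_1}|_{\hat{G}_1}$ is trivial; the induced morphism $\hat{X}_1 \to U_1$ is then constant on $\hat{G}_1$ and factors through $g_1$ to produce the desired morphism $\check{p}_1$. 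This intersection-theoretic vanishing reflects the geometric observation that the whole curve $L_1$—whose image under the pagoda $\sigma \circ \psi^{-1}$ from $X_1$ to $\hat{X}_1$ is exactly the surface $\hat{G}_1$—already maps to the single point $\xi_1 \in U_1$ under $p_1$, so $\hat{G}_1$ collapses to $\xi_1$ under the morphism $\hat{X}_1 \to U_1$ and the factorization through $g_1$ is automatic.
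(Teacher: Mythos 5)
Your overall route is the paper's: reduce everything to base-point-freeness of the proper transform of the pencil $\mathcal{H}$, verify it on $W$ surface by surface, and then descend first through $\omega_2$ and finally through the contraction $g_1$ of $\hat{G}_1\cong\mathbb{P}^2$ to the half-point $\Xi_1$, the last step being the genuine novelty. However, one of your intermediate claims is false, and it contradicts your own (correct) final step. You assert that on $G_{1,W}\cong\mathbb{F}_1$ the restriction of $\mathcal{H}_W$ ``cuts out varying rulings,'' by analogy with Construction~II, where $\mathcal{H}_W\vert_{G_{1,W}}\sim t_1$ on $\mathbb{F}_2$. In Construction~III this is not what happens: computing the class of $\mathcal{H}_W$ (the pull-back of $\mathcal{O}_{\mathbb{P}^2}(1)$ dies on $G_{1,W}$ because $G_{1,W}$ lies over the point $\xi$, and the contributions of $G_W$, $G_{1,W}$, $E_W^P$, $E_W^Q$ cancel) one finds
$$
\mathcal{H}_W\vert_{G_{1,W}}\sim 0,
$$
i.e.\ a general member of $\mathcal{H}_W$ is \emph{disjoint} from $G_{1,W}$; it does not trace a moving ruling on it.

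This is not a cosmetic slip. If $\mathcal{H}_W\vert_{G_{1,W}}$ really were the ruling class of $\mathbb{F}_1$, then its image under $\omega_2$ (which contracts the $(-1)$-curve $G_W\cap G_{1,W}$ of $G_{1,W}$) would be the class of a line on $\hat{G}_1\cong\mathbb{P}^2$, so $\mathcal{H}_{\hat{X}_1}\cdot\ell=1$ and the pencil would \emph{not} descend through $g_1$ --- precisely the failure mode your last paragraph identifies as the crux of the lemma. The triviality $\mathcal{H}_W\vert_{G_{1,W}}\sim 0$ is exactly the statement that $\hat{G}_1$ is disjoint from a general member of $\mathcal{H}_{\hat{X}_1}$, which is what makes the factorization through $g_1$ automatic; your heuristic about $L_1$ collapsing to $\xi_1$ is a plausibility argument but not a proof, since $p_1$ is only a rational map and the image of $L_1$ is not defined pointwise before resolving. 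Replace the ``varying rulings'' claim on $G_{1,W}$ by the divisor-class computation giving $\mathcal{H}_W\vert_{G_{1,W}}\sim 0$ and the argument closes up and coincides with the paper's.
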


\begin{proof}
As in the proof of Lemmas~\ref{lemma:regular-1} and \ref{lemma:regular-2}, let
$\mathcal{H}$ be the pencil of curves that are proper transforms on $U_1$ of lines in $\P^2$ passing through the point $\xi$.
Denote by $\mathcal{H}_{\check{X}_1}$ its proper transform on $\check{X}_1$.
To show that $\check{p}_1$ is a morphism,
it is enough to show that $\mathcal{H}_{\check{X}_1}$ is base point free.
To start with, we show that its proper transform $\mathcal{H}_W$ on $W$ is base point free.

By construction, we know that all base points of the pencil $\mathcal{H}_W$ are contained in the
union
$$
G_W\cup G_{1,W}\cup E_W^P\cup E_W^Q.
$$
Let us show that the pencil $\mathcal{H}_W$ has no base points in these surfaces.

Let $l_1$ be the class of the ruling of $G_W\cong\mathbb{P}^1\times\mathbb{P}^1$ that is contracted by $\omega_2$.
We already showed in the proof of Lemma~\ref{lemma:normal-bundle-1-2} that
the proper transforms of general surfaces of $\mathcal{H}$ on $X_0$ have nodes in $P_0$ and $Q_0$,
and the proper transforms of general surfaces of $\mathcal{H}$ on $X_1$ are smooth and contain the curve $L_1$.
Moreover, arguing as in the proof of Lemma~\ref{lemma:0-2-to-1-1}, we see
the proper transforms of general surfaces of $\mathcal{H}$ on $\bar{X}_1$ pass through the curve $\bar{L}_1$.
This implies that
$$
\mathcal{H}_W\sim \left(\pi\circ f\circ f_1\circ\omega_1\right)^*\mathcal{O}_{\mathbb{P}^2}(1)-2G_W-G_{1,W}-E_W^P-E_W^Q.
$$
This implies that $\mathcal{H}_W\vert_{G_W}\sim l_1$.
Therefore, either two different elements of the pen\-cil~$\mathcal{H}_W$ do not have intersection points in $G_W$,
or all of them contain one and the same ruling of class~$l_1$.
The latter case is impossible.
Indeed, the proper transforms of elements of $\mathcal{H}$ on~$X_1$ are tangent to each other along $L_1$ with multiplicity~$2$,
so that their proper transforms on $\bar{X}_1$ intersect each other transversally at general point of the curve $\bar{L}_1$,
which implies that two different elements of the pencil $\mathcal{H}_W$ cannot both contain a curve that is mapped dominantly to $\bar{L}_1$ by $\omega_1$.
Therefore, the pencil $\mathcal{H}_W$ has no base points in $G_W$.
Also, we have
$$
\mathcal{H}_W\vert_{G_{1,W}}\sim 0,
$$
which implies that the surface $G_{1,W}$ is disjoint from a general member of the pencil $\mathcal{H}_W$.
In particular, $\mathcal{H}_W$ has no base points in $G_{1,W}$.

Arguing as in the proof of Lemma~\ref{lemma:regular-2}, we see that
the pencil $\mathcal{H}_W$ does not have base points in the surfaces $E_W^P$ and $E_W^Q$
outside the curves
$$E_W^P\cap G_W,\quad E_W^P\cap G_{1,W},\quad E_W^Q\cap G_W,\quad
E_W^Q\cap G_{1,W}.$$
But we already know that $\mathcal{H}_W$ has no base points in $G_W$ and $G_{1,W}$.
This shows that $\mathcal{H}_W$ is base point free.
In particular, the rational map $\check{p}_1\circ g_1\circ\omega_2$ is a morphism.

Observe that the restrictions
$$
\mathcal{H}_W\vert_{G_W}\sim G_{1,W}\vert_{G_W}\sim l_1
$$
lie in the fibers of the morphism $\omega_2$.
Therefore, the proper transform $\mathcal{H}_{\hat{X}_1}$ of the pen\-cil~$\mathcal{H}_{\check{X}_1}$ on the threefold $\hat{X}_1$
is base point free, and the surface $\hat{G}_1$ is disjoint from its general member.
This shows that $\mathcal{H}_{\check{X}_1}$ is base point free, so that $\check{p}_1$ is a morphism.
\end{proof}

Let us describe the fibers of $\check{p}_1$ over the points of the curve $Z_1$.
Denote by $\bar{E}_1^P$, $\hat{E}_1^P$, and~$\check{E}_1^P$ the proper transforms of the surface $E_1^P$
on the threefolds $\bar{X}_1$, $\hat{X}_1$, and $\check{X}_1$,
respectively. Similarly, denote by
$\bar{E}_1^Q$, $\hat{E}_1^Q$, and
$\check{E}_1^Q$ the proper transforms of the sur\-fa\-ce~$E_1^Q$
on the threefolds $\bar{X}_1$, $\hat{X}_1$, and $\check{X}_1$,
respectively. One has
$$
\check{E}_1^P\cap\check{E}_1^Q=\hat{L}_1.
$$
Moreover, the surfaces $\check{E}_1^P$ and $\check{E}_1^Q$ intersect
along the curve~$\hat{L}_1$, and this intersection
is transversal outside the singular point~$\Xi_1$.
Furthermore, one has
$$
\check{p}_1^{-1}(Z_1)=\check{E}_1^P\cup\check{E}_1^Q.
$$

The curve $L_1$ intersects each of the divisors $E_1^P$ and $E_1^Q$ transversally at a single point.
Denote by $M_P$ and $M'_P$ the two rulings of $E_1^P\cong\P^1\times\P^1$
that pass through the intersection point $L_1\cap E_1^P$,
and denote by $M_Q$ and $M'_Q$ the two rulings of $E_1^Q\cong\P^1\times\P^1$
that pass through the intersection point $L_1\cap E_1^Q$.
Let $\check{M}_P$, $\check{M}'_P$, $\check{M}_Q$, and $\check{M}'_Q$
be the proper transforms on $\check{X}_1$ of the curves $M_P$, $M'_P$, $M_Q$, and $M'_Q$, respectively.
Then the cur\-ves~$\check{M}_P$,~$\check{M}'_P$,~$\check{M}_Q$,
and~$\check{M}'_Q$ pass through the singular point $\Xi_1$,
and are mapped by $\check{p}_1$ to the nodal point $\xi_1$ of the curve $C_1$.
Since
$$
K_{\hat{X}_1}\sim_{\mathbb{Q}} g_1^{*}K_{\check{X}_1}+\frac{1}{2}\hat{G}_1,
$$
one has
$$
-K_{\check{X}_1}\cdot\check{M}_P=-K_{\check{X}_1}\cdot\check{M}'_P=-K_{\check{X}_1}\cdot\check{M}_Q=-K_{\check{X}_1}\cdot\check{M}'_Q=\frac{1}{2}.
$$
This shows that $\check{M}_P+\check{M}'_P+\check{M}_Q+\check{M}'_Q$ is a scheme theoretic fiber of $\check{p}_1$ over $\xi_1$.
All other fibers of $\check{p}_1$ over the points of $Z_1$ are described by the following remark.

\begin{remark}
\label{remark:E-P-Q}
The commutative diagram \eqref{equation:half-diagram} gives the commutative diagram
$$
\xymatrix{
&E_W^P\ar@{->}[rd]^{\omega_1}\ar@{->}[ld]_{\omega_2}&\\
\hat{E}_1^P\ar@{->}[d]_{g_1}&&\bar{E}_1^P\ar@{->}[d]^{f_1}\\
\check{E}_1^P\ar@{->}[dr]_{\check{p}_1}&&E_1^P\ar@{-->}[dl]^{p_1}&\\
&Z_1.&&}
$$
Here we denote the restrictions of the morphisms $\omega_1$, $\omega_2$, $g_1$, $f_1$, $\check{p}_1$,
and the rational map~$p_1$ to the corresponding surfaces by the same symbols for simplicity.
The surface $E_1^P$ can be identified with a quadric in $\mathbb{P}^3$,
and the rational map $p_1$ is the linear projection of $E_1^P$ from a line
that is tangent to it at the point $P_1=L_1\cap E_1^P$ (cf. the proof of Lemma~\ref{lemma:regular-2-1}).
The morphism $f_1$ is the blow up of the point $P_1$,
the morphism $\omega_1$ is the blow up of the point $\bar{L}_1\cap\bar{E}_1^P$,
the morphism $\omega_2$ is an isomorphism.
The mor\-phism~$g_1$ is the contraction of the
$(-2)$-curve \mbox{$\hat{G}_1\cap\hat{E}_1^P$}
to the node $\Xi_1$ of the surface $\check{E}_1^P$.
By construction, we have~$\check{p}_1(\Xi_1)=\xi_1$, and the fiber of~$\check{p}_1$ over $\xi_1$ is $\check{M}_P\cup\check{M}'_P$.
The fibers of~$\check{p}_1$ over all other points in $Z_1$ are smooth rational curves.
A similar description applies to the
sur\-fa\-ces~$E_1^Q$,~$\bar{E}_1^Q$,~$E_W^Q$,~$\hat{E}_1^Q$, and~$\check{E}_1^Q$.
\end{remark}

Let $\hat{M}_P$, $\hat{M}'_P$, $\hat{M}_Q$, and $\hat{M}'_Q$
be the proper transforms on $\hat{X}_1$ of the curves
$M_P$, $M'_P$, $M_Q$, and $M'_Q$, respectively.
The curves $\hat{M}_P$, $\hat{M}'_P$, $\hat{M}_Q$, and $\hat{M}'_Q$
are pairwise disjoint, and each of them is disjoint from the curve $\hat{L}_1$.

\begin{lemma}\label{lemma:4-times-1-1}
Each of the normal bundles
$\mathcal{N}_{\hat{M}_P/\hat{X}_1}$,
$\mathcal{N}_{\hat{M}'_P/\hat{X}_1}$,
$\mathcal{N}_{\hat{M}_Q/\hat{X}_1}$, and
$\mathcal{N}_{\hat{M}'_Q/\hat{X}_1}$
is isomorphic to
$\mathcal{O}_{\mathbb{P}^1}(-1)\oplus\mathcal{O}_{\mathbb{P}^1}(-1)$.
\end{lemma}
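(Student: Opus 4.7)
By the manifest symmetry of the configuration (the two rulings of $E_1^P$ through $L_1\cap E_1^P$ play symmetric roles, and similarly for $E_1^Q$ through $L_1\cap E_1^Q$; the two nodes $P_0$ and $Q_0$ are themselves interchangeable), it suffices to prove the statement for a single curve, say $\hat{M}_P$; the other three cases are handled identically. The plan is to reduce the problem to a computation on $\bar{X}_1$, taking advantage of the fact (already recorded above) that $\hat{M}_P$ is disjoint from $\hat{L}_1$. Since the Atiyah flop $\psi$ is an isomorphism outside the flopping curves $\bar{L}_1$ and $\hat{L}_1$, the disjointness of $\hat{M}_P$ from $\hat{L}_1$ forces $\bar{M}_P$ to be disjoint from $\bar{L}_1$ on $\bar{X}_1$, and the restriction of $\psi$ identifies open neighborhoods of $\bar{M}_P$ and $\hat{M}_P$ in the respective threefolds. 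Consequently $\mathcal{N}_{\hat{M}_P/\hat{X}_1}\cong\mathcal{N}_{\bar{M}_P/\bar{X}_1}$, and it suffices to compute the latter.

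To compute $\mathcal{N}_{\bar{M}_P/\bar{X}_1}$, I would apply Lemma~\ref{lemma:stupid} with $R=\bar{E}_1^P$. The surface $\bar{E}_1^P$ is the blow up of $E_1^P\cong\mathbb{P}^1\times\mathbb{P}^1$ at the point $P_1=L_1\cap E_1^P$; the curve $\bar{M}_P$ is the proper transform of the ruling $M_P$ through $P_1$, so it is a $(-1)$-curve on $\bar{E}_1^P$ and $\mathcal{N}_{\bar{M}_P/\bar{E}_1^P}\cong\mathcal{O}_{\mathbb{P}^1}(-1)$. For the anticanonical intersection, $K_{\bar{X}_1}=f_1^*K_{X_1}+\bar{G}_1$ and the projection formula give $f_1^*K_{X_1}\cdot\bar{M}_P=K_{X_1}\cdot M_P$, which equals $-1$ since $\mathcal{N}_{E_1^P/X_1}\cong\mathcal{O}(-1,-1)$ restricts to degree $-1$ on the ruling $M_P$ and $E_1^Q$ is disjoint from $M_P$. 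Finally, $\bar{G}_1\cdot\bar{M}_P=1$: the curve $M_P$ meets $L_1$ transversally at the single point $P_1$, so $\bar{M}_P$ meets $\bar{G}_1$ transversally at the single point lying over $P_1$. Thus $-K_{\bar{X}_1}\cdot\bar{M}_P=0$, and Lemma~\ref{lemma:stupid} (with $r=-1$ and $s=0$, where equality holds in the hypothesis $2r\geqslant s-2$) yields $\mathcal{N}_{\bar{M}_P/\bar{X}_1}\cong\mathcal{O}_{\mathbb{P}^1}(-1)\oplus\mathcal{O}_{\mathbb{P}^1}(-1)$.

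The argument is fairly mechanical once the transport across $\psi$ is set up. The main potential obstacle is confirming that $\bar{M}_P$ really is disjoint from $\bar{L}_1$ on $\bar{X}_1$, but this is forced by the structure of an Atiyah flop combined with the explicit statement $\hat{M}_P\cap\hat{L}_1=\varnothing$ already proved. A secondary point worth double-checking is the transversality of the intersection of $\bar{M}_P$ with $\bar{G}_1$, which reduces to the transversality of $M_P$ and $L_1$ at $P_1$ inside the smooth threefold $X_1$---this holds because $M_P$ is contained in $E_1^P$ while $L_1$ meets $E_1^P$ transversally at $P_1$.
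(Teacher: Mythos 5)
Your proposal is correct and follows essentially the same route as the paper: pass to $\bar{X}_1$ using that the flop $\psi$ is an isomorphism away from $\bar L_1$ and $\hat L_1$, note $\mathcal{N}_{\bar{M}_P/\bar{E}_1^P}\cong\mathcal{O}_{\mathbb{P}^1}(-1)$, compute $K_{\bar{X}_1}\cdot\bar{M}_P=0$, and conclude by Lemma~\ref{lemma:stupid}. The only difference is that you spell out the intersection computation ($K_{X_1}\cdot M_P=-1$ and $\bar G_1\cdot\bar M_P=1$) that the paper leaves implicit.
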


\begin{proof}
Let $\bar{M}_P$, $\bar{M}'_P$, $\bar{M}_Q$, and $\bar{M}'_Q$
be the proper transforms on $\bar{X}_1$ of the cur\-ves~$M_P$,~$M'_P$,~$M_Q$, and $M'_Q$, respectively.
Then $\bar{M}_P$, $\bar{M}'_P$, $\bar{M}_Q$,
and $\bar{M}'_Q$ are disjoint from the curve~$\bar{L}_1$.
Therefore, to prove the assertion of the lemma, it is enough to compute the normal bundles of the latter four curves on $\bar{X}_1$.

One has
$\mathcal{N}_{\bar{M}_P/\bar{E}_1^P}\cong\mathcal{O}_{\mathbb{P}^1}(-1)$.
On the other hand, we compute
$K_{\bar{X}_1}\cdot \bar{M}_P=0$, so that
the assertion for the curve $\bar{M}_P$ follows from Lemma~\ref{lemma:stupid}.
For the curves $\bar{M}'_P$, $\bar{M}_Q$ and $\bar{M}'_Q$
the argument is similar.
\end{proof}

By Lemma~\ref{lemma:4-times-1-1},
we can make simultaneous Atiyah flops
in the curves $\hat{M}_P$, $\hat{M}'_P$, $\hat{M}_Q$, and $\hat{M}'_Q$.
Let $\phi\colon\hat{X}_1\dasharrow\bar{X}_2$ be the composition of these four flops.

Let $\hat{\omega}_1\colon \hat{W}\to\hat{X}_1$
be the blow up of the curves $\hat{M}_P$, $\hat{M}'_P$, $\hat{M}_Q$,
and~$\hat{M}'_Q$.
Denote by~$N_P$,~$N_P^\prime$,~$N_Q$, and $N_Q^\prime$ the exceptional surfaces of $\hat{\omega}_1$ that are mapped to
the cur\-ves~$\hat{M}_P$,~$\hat{M}'_P$,~$\hat{M}_Q$, and $\hat{M}'_Q$, respectively.
Then there is a commutative diagram
$$
\xymatrix{
&\hat{W}\ar@{->}[rd]^{\hat{\omega}_1}\ar@{->}[ld]_{\hat{\omega}_2}&\\
\bar{X}_2&&\hat{X}_1\ar@{-->}[ll]_{\phi},}
$$
where $\hat{\omega}_2$ is the contraction of the surfaces $N_P$, $N_P^\prime$, $N_Q$, and $N_Q^\prime$ to
smooth rational curves contained in the smooth locus of $\bar{X}_2$.

Let $f_{\xi_1}\colon U_2\to U_1$ be the blow up of the point $\xi_1$.
Denote by $T_2$ the exceptional divisor of the blow up $f_{\xi_1}$,
and by $Z_2$ and $C_2$ the proper transforms of the curves
$Z_1$ and $C$ on the surface $U_2$, respectively.
Note that the curves $C_2$ and $Z_2$ are disjoint.
Furthermore, let
$\bar{p}_2\colon\bar{X}_2\dasharrow U_2$ be the resulting rational map.
We have constructed the  following commutative diagram
\begin{equation}
\label{equation:second-half-diagram}
\xymatrix{
&&&\hat{W}\ar@{->}[ld]_{\hat{\omega}_2}\ar@{->}[rd]^{\hat{\omega}_1}&&&\\
&&\bar{X}_2\ar@{-->}[dd]^{\bar{p}_2}&&\hat{X}_1\ar@{->}[drr]^{g_1}\ar@{-->}[ll]_{\phi}&&&\\
&&&&&&\check{X}_1\ar@{->}[d]^{\check{p}_1}&&&\\
 &&U_2\ar@{->}[rrrr]^{f_{\xi_1}}&&&&U_1.&&&}
\end{equation}

\begin{lemma}\label{lemma:regular-3-1}
The rational map $\bar{p}_2$ is a morphism.
\end{lemma}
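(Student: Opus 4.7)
The plan is to proceed along the same lines as the proofs of Lemmas~\ref{lemma:regular-1}, \ref{lemma:regular-2}, and~\ref{lemma:regular-2-1}: exhibit a pencil $\mathcal{H}$ on $U_2$ whose proper transform $\mathcal{H}_{\bar{X}_2}$ on $\bar{X}_2$ is base point free, and combine this with the pull-back of $|\mathcal{O}_{\mathbb{P}^2}(1)|$ from $\mathbb{P}^2$ (which is base point free on $\bar{X}_2$, since it is pulled back via $\pi\circ f$) to conclude that $\bar{p}_2$ is a morphism.

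The new feature here, compared with Lemma~\ref{lemma:regular-2-1}, is that $U_2$ carries the additional exceptional divisor $T_2$, so the chosen pencil must separate points on $T_2$; the pencil of proper transforms of lines in $\mathbb{P}^2$ through $\xi$ used in Lemma~\ref{lemma:regular-2-1} is inadequate since only one of its members meets $T_2$. The natural replacement in the present (tacnodal) case of Construction~III is the pencil $\mathcal{B}$ of conics from Lemma~\ref{lemma:puchok}: a direct local calculation in the coordinates of the proof of Lemma~\ref{lemma:puchok} shows that the proper transforms on $U_2$ of different conics in $\mathcal{B}$ meet $T_2$ at different points, so the proper transform $\mathcal{H}$ of $\mathcal{B}$ on $U_2$ separates $T_2$ and is base point free in a neighborhood of $Z_2\cup T_2$.

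To verify that $\mathcal{H}_{\bar{X}_2}$ is base point free on $\bar{X}_2$, the plan is to lift $\mathcal{H}_{\bar{X}_2}$ to the common resolution $\hat{W}$ of $\phi$ via $\hat{\omega}_2$ and analyze the resulting pencil $\mathcal{H}_{\hat{W}}$ on each exceptional surface: the four divisors $N_P$, $N_P'$, $N_Q$, $N_Q'$ of $\hat{\omega}_1$, together with the proper transforms of $\hat{G}_1$, $\hat{E}_1^P$, $\hat{E}_1^Q$, and (further pulled back from~$W$) of $G_W$ and $G_{1,W}$. On each such surface, one computes the restriction class of $\mathcal{H}_{\hat{W}}$ and checks that a general member of $\mathcal{H}_{\hat{W}}$ contains no ruling. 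The crucial input is the multiplicity relation $\mathrm{mult}_{F_\xi}(B_1\cdot B_2)=4$ from Lemma~\ref{lemma:puchok}, which ensures that the nodal singularities of the pull-backs $B_i$ along $F_\xi$ are resolved with precisely the right multiplicities by the successive modifications $f$, $\sigma$, and $\phi$; descent to $\bar{X}_2$ is then automatic, since $\mathcal{H}_{\hat{W}}$ restricts to each $N_\bullet$ as a class in the fibers of the contraction $\hat{\omega}_2$.

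The main obstacle is the case-by-case verification on the divisors $N_P$, $N_P'$, $N_Q$, $N_Q'$, where the geometry of the four simultaneous Atiyah flops composing $\phi$ must be balanced against the second-order behaviour of $\mathcal{B}$ at $\xi$; this is exactly the step where the multiplicity~$4$ of Lemma~\ref{lemma:puchok} (accounting for two flopped curves in each of $E_1^P$ and $E_1^Q$) is essential. Once this is done, the restrictions to $\hat{G}_1$, $\hat{E}_1^P$ and $\hat{E}_1^Q$ are handled by arguments strictly parallel to those in Lemma~\ref{lemma:regular-2-1}, and the assertion follows.
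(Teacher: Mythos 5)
Your proposal follows essentially the same route as the paper: the linear system you take (conics tangent to $T$ at $\xi$, i.e.\ the system $\mathcal{B}$ of Lemma~\ref{lemma:puchok}, whose proper transform on $U_1$ is exactly the subsystem of $|f_\xi^*\mathcal{O}_{\mathbb{P}^2}(2)-Z_1|$ through $\xi_1$ used in the paper) is the paper's choice, the key input $\mathrm{mult}_{F_\xi}(B_1\cdot B_2)=4$ from Lemma~\ref{lemma:puchok} is invoked at the same step, and the verification proceeds identically by tracking base loci through $\bar{X}_1$, $\hat{X}_1$ and $\hat{W}$ and observing that the restrictions to $N_P$, $N_P'$, $N_Q$, $N_Q'$ lie in the fibers of $\hat{\omega}_2$. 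The only cosmetic difference is that you call $\mathcal{B}$ a pencil (it is a net of projective dimension~$3$) and you make explicit the supplementary use of pull-backs of $|\mathcal{O}_{\mathbb{P}^2}(1)|$ for separation, which the paper handles implicitly.
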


\begin{proof}
Let $\mathcal{B}_{U_1}$ be the linear subsystem in $|f_\xi^*\mathcal{O}_{\mathbb{P}^2}(2)-Z_1|$
consisting of all curves that pass through the point $\xi_1$.
Note that the base locus of $\mathcal{B}_{U_1}$ is the point $\xi_1$.
Moreover, the point $\xi_1$ is a scheme theoretic
intersection of curves in $\mathcal{B}_{U_1}$.
Denote by $\mathcal{B}_{U_2}$ the proper transform of
$\mathcal{B}_{U_1}$ on $U_2$, so that $\mathcal{B}_{U_2}$ is a base point
free linear system.

Denote by $\mathcal{B}_{\check{X}_1}$ the proper transform
of $\mathcal{B}_{U_2}$ on $\check{X}_1$ via $\check{p}_1$.
Then the base locus of $\mathcal{B}_{\check{X}_1}$ consists of the curves  $\check{M}_P$, $\check{M}'_P$, $\check{M}_Q$, and $\check{M}'_Q$.
Moreover, the union of these curves is a scheme theoretic intersection of surfaces in $\mathcal{B}_{\check{X}_1}$.

Denote by $\mathcal{B}_{\bar{X}_2}$ the proper transform of  $\mathcal{B}_{\check{X}_1}$ on the threefold $\bar{X}_2$.
To prove that $\bar{p}_2$ is a morphism, it is enough to show that $\mathcal{B}_{\bar{X}_2}$ is base point free.
Denote by $\mathcal{B}_{\bar{X}_1}$, $\mathcal{B}_{\hat{X}_1}$, and~$\mathcal{B}_{\hat{W}}$ the proper transforms of  $\mathcal{B}_{\check{X}_1}$ on the threefolds $\bar{X}_1$, $\hat{X}_1$, and $\hat{W}$, respectively.
To show that $\mathcal{B}_{\bar{X}_2}$ is base point free, let us describe the base loci of $\mathcal{B}_{\bar{X}_1}$, $\mathcal{B}_{\hat{X}_1}$, and $\mathcal{B}_{\hat{W}}$.

We claim that the base locus of $\mathcal{B}_{\bar{X}_1}$ consists of the curves $\bar{M}_P$, $\bar{M}'_P$, $\bar{M}_Q$, and $\bar{M}'_Q$.
We already know that these curves are contained in the base locus.
On the other hand, the base locus of $\mathcal{B}_{\hat{X}_1}$ is contained in the union of
the curves $\hat{M}_P$, $\hat{M}'_P$, $\hat{M}_Q$, and $\hat{M}'_Q$, and the surface $\hat{G}_1$.
Thus, the base locus of $\mathcal{B}_{\bar{X}_1}$ consists of the curves $\bar{M}_P$, $\bar{M}'_P$, $\bar{M}_Q$, and $\bar{M}'_Q$,
and a (possibly empty) subset of $\bar{G}_1$. Using Lemma~\ref{lemma:puchok}, we obtain the equivalence
$$
\mathcal{B}_{\bar{X}_1}\sim \left(\pi\circ f\circ f_1\right)^*\mathcal{O}_{\mathbb{P}^2}(2)-2\bar{G}_1-\bar{E}_1^P-\bar{E}_1^Q.
$$
This gives
$$
\mathcal{B}_{\bar{X}_1}\vert_{\bar{G}_1}\sim 2\bar{L}_1+2t,
$$
where $t$ is the class of a ruling of $\bar{G}_1\cong\mathbb{F}_1$.
The latter equivalence together with
Lemma~\ref{lemma:puchok} shows that the restriction $\mathcal{B}_{\bar{X}_1}\vert_{\bar{G}_1}$
does not have base curves that are mapped dominantly
to the curve $L_1$ by $f_1$.
In particular, a general surface in $\mathcal{B}_{\bar{X}_1}$ is disjoint from the curve~$\bar{L}_1$.
On the other hand, the four points
$$\bar{M}_P\cap \bar{G}_1,\quad \bar{M}'_P\cap \bar{G}_1,\quad
\bar{M}_Q\cap \bar{G}_1, \quad \bar{M}'_Q\cap \bar{G}_1$$
are contained in the base locus of the restriction $\mathcal{B}_{\bar{X}_1}\vert_{\bar{G}_1}$.
This implies that $\mathcal{B}_{\bar{X}_1}\vert_{\bar{G}_1}$ is a pencil whose base locus consists of exactly these four points.
In particular, the base locus of $\mathcal{B}_{\bar{X}_1}$ consists of the curves $\bar{M}_P$, $\bar{M}'_P$, $\bar{M}_Q$, and $\bar{M}'_Q$.

Since a general surface in $\mathcal{B}_{\bar{X}_1}$ is disjoint from $\bar{L}_1$, we see that
the base locus of $\mathcal{B}_{\hat{X}_1}$ consists of the curves $\hat{M}_P$, $\hat{M}'_P$, $\hat{M}_Q$, and $\hat{M}'_Q$.
By the same reason, we see that the restriction of $\mathcal{B}_{\hat{X}_1}$ to $\hat{G}_1\cong\mathbb{P}^2$
is a pencil of conics that pass through the four points
$$
\hat{M}_P\cap\hat{G}_1,\quad \hat{M}'_P\cap\hat{G}_1,\quad \hat{M}_Q\cap\hat{G}_1,\quad \hat{M}'_Q\cap\hat{G}_1.
$$
In particular, these four points
are in general position.

Computing the classes of the restrictions of the linear system $\mathcal{B}_{\hat{W}}$ to the exceptional divisors
$N_P$, $N_P^\prime$, $N_Q$, and $N_Q^\prime$, we see that they all lie in the fibers of $\hat{\omega}_2$.
This shows that both linear systems $\mathcal{B}_{\hat{W}}$ and $\mathcal{B}_{\bar{X}_2}$ are base point free.
Thus, we proved that  $\bar{p}_2$ is a morphism.
\end{proof}

Let $\bar{G}_2$, $\bar{E}_2^P$, and $\bar{E}_2^Q$ be the proper transforms
on $\bar{X}_2$ of the surfaces $\hat{G}_1$, $\hat{E}_1^P$,
and~$\hat{E}_1^Q$, respectively.
Then $\bar{E}_2^P$ (respectively, $\bar{E}_2^Q$)
is isomorphic to $\mathbb{F}_1$ since it is obtained from the
surface~$\hat{E}_1^P$ (respectively,
$\hat{E}_1^Q$) by blowing down two $(-1)$-curves $\hat{M}_P$ and $\hat{M}'_P$
(respectively,~$\hat{M}_Q$ and $\hat{M}'_Q$) as a result of flopping them (cf. Remark~\ref{remark:E-P-Q}).
Similarly, $\bar{G}_2$ is a smooth del Pezzo surface of degree $5$
since it is obtained from the surface $\hat{G}_1\cong\mathbb{P}^2$
by blowing up four points
$$
\hat{M}_P\cap\hat{G}_1,\quad \hat{M}'_P\cap\hat{G}_1,\quad \hat{M}_Q\cap\hat{G}_1,\quad \hat{M}'_Q\cap\hat{G}_1,
$$
which are in general position (see the proof of Lemma~\ref{lemma:regular-3-1}).

We know that the preimage of $Z_2$ via $\bar{p}_2$ is the union of the surfaces $\bar{E}_2^P$ and $\bar{E}_2^Q$.
These surfaces intersect transversally along the curve that is a unique $(-1)$-curve on each of them.
Moreover, the restrictions
$$\bar{p}_2\vert_{\bar{E}_2^P}\colon \bar{E}_2^P\to Z_2$$
and
$$\bar{p}_2\vert_{\bar{E}_2^Q}\colon \bar{E}_2^Q\to Z_2$$
are just natural projections of $\bar{E}_2^P\cong\mathbb{F}_1$
and $\bar{E}_2^Q\cong\mathbb{F}_1$ to $\mathbb{P}^1$.
In particular, the curve $Z_2$ is contained
in the degeneration curve of the conic bundle~$\bar{p}_2$.

By construction, the preimage of the curve $T_2$ via $\bar{p}_2$ is the surface $\bar{G}_2$,
and the induced morphism
$\bar{p}_2\vert_{\bar{G}_2}\colon \bar{G}_2\to T_2$
is a conic bundle with three reducible fibers.
In particular, the curve $T_2$ is not contained in the degeneration curve of the conic bundle  $\bar{p}_2$,
so that the latter degeneration curve is $Z_2\cup C_2$.
Note that the fibers of $\bar{p}_2$ over the two points in~$C_2\cap T_2$ must be reducible.
Also the fiber of $\bar{p}_2$ over the point $T_2\cap Z_2$ is reducible.
Thus, these three fibers are all reducible fibers of $\bar{p}_2$ over $T_2$.

There are contractions
$$f_2^P\colon\bar{X}_2\to X_2^P$$
and
$$f_2^Q\colon\bar{X}_2\to X_2^Q$$
of the surfaces
$\bar{E}_2^P$ and $\bar{E}_2^Q$ to the curves contained in the smooth loci of the threefolds $X_2^P$ and $X_2^Q$, respectively.
Let $p_2^P\colon X_2^P\to U_2$ and $p_2^Q\colon X_2^Q\to U_2$ be the resulting morphisms.
Then there is a commutative diagram
\begin{equation}
\label{equation:third-half-diagram}
\xymatrix{
&\bar{X}_2\ar@{->}[dd]^{\bar{p}_2}\ar@{->}[rd]^{f^P_2}\ar@{->}[ld]_{f^Q_2}&\\
X_2^Q\ar@{->}[dr]_{p_2^Q}&&X_2^P\ar@{->}[dl]^{p_2^P}\\
&U_2.&}
\end{equation}

\begin{corollary}\label{corollary:regular-3}
The curve $C_2$ is the degeneration curve of both conic bundles $p_2^P$ and~$p_2^Q$.
\end{corollary}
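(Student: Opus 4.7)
The plan is to exploit the commutative factorization $p_2^P\circ f_2^P=\bar{p}_2$ coming from diagram \eqref{equation:third-half-diagram} (and symmetrically $p_2^Q\circ f_2^Q=\bar{p}_2$) together with the already established fact that the degeneration curve of $\bar{p}_2$ is $Z_2\cup C_2$. The essential input beyond this is that $C_2$ and $Z_2$ are disjoint, that $f_2^P$ contracts only the surface $\bar{E}_2^P$, and that $\bar{E}_2^P$ is contained in $\bar{p}_2^{-1}(Z_2)=\bar{E}_2^P\cup\bar{E}_2^Q$.

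First I would observe that $f_2^P$ is an isomorphism over the open set $U_2\setminus Z_2$. Since $C_2\cap Z_2=\varnothing$, this shows that $f_2^P$ restricts to an isomorphism on a neighborhood of $\bar{p}_2^{-1}(C_2)$. Hence each fiber of $p_2^P$ over a point of $C_2$ is isomorphic to the corresponding (singular) fiber of $\bar{p}_2$, which shows $C_2\subseteq\Delta(p_2^P)$. Similarly, generic fibers of $p_2^P$ over $T_2$ coincide with the smooth generic fibers of $\bar{p}_2\vert_{\bar{G}_2}$, and the two reducible fibers of $\bar{p}_2$ over $C_2\cap T_2$ are already accounted for.

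Next I would verify that $Z_2$ disappears from the degeneration locus. Over a general point $u\in Z_2$, the fiber $\bar{p}_2^{-1}(u)$ splits as a union $R_P\cup R_Q$ of a ruling of $\bar{E}_2^P\cong\mathbb{F}_1$ and a ruling of $\bar{E}_2^Q\cong\mathbb{F}_1$, meeting transversally at the point lying on their common $(-1)$-curve. The morphism $f_2^P$ contracts each ruling $R_P$ to a point, while $R_Q\not\subset\bar{E}_2^P$ meets $\bar{E}_2^P$ at a single point; hence $f_2^P\vert_{R_Q}$ is a bijective morphism from $\mathbb{P}^1$ into the smooth locus of $X_2^P$, so an isomorphism. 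Thus $(p_2^P)^{-1}(u)=f_2^P(R_Q)$ is a smooth conic. The same analysis at the exceptional point $T_2\cap Z_2$ (whose $\bar{p}_2$-fiber lies in $\bar{G}_2\cap(\bar{E}_2^P\cup\bar{E}_2^Q)$ and again splits into one component in $\bar{E}_2^P$ and one in $\bar{E}_2^Q$) shows the fiber becomes smooth after $f_2^P$.

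Combining these observations yields $\Delta(p_2^P)=C_2$, and the symmetric argument, swapping the roles of $\bar{E}_2^P$ and $\bar{E}_2^Q$, gives $\Delta(p_2^Q)=C_2$. The main thing to be careful about is the fiber over $T_2\cap Z_2$, where a priori one might worry that the component structure inside $\bar{G}_2$ is more intricate; but since this fiber lies in $\bar{E}_2^P\cup\bar{E}_2^Q$ and its two components are distributed one in each, the same contraction argument applies and nothing pathological occurs.
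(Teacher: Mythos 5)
Your proposal is correct and follows essentially the same route as the paper, which derives the corollary directly from the preceding description: the degeneration curve of $\bar{p}_2$ is $Z_2\cup C_2$, the fibers over $Z_2$ split as rulings of $\bar{E}_2^P$ and $\bar{E}_2^Q$, and contracting $\bar{E}_2^P$ (resp.\ $\bar{E}_2^Q$) removes exactly the $Z_2$-component of the degeneration locus while leaving the fibers over $C_2$ (disjoint from $Z_2$) untouched. Your extra care about the fiber over $T_2\cap Z_2$ and the reducedness of the images $f_2^P(R_Q)$ is a welcome filling-in of details the paper leaves implicit.
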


Gluing together the commutative diagrams \eqref{equation:half-diagram}, \eqref{equation:second-half-diagram}, and \eqref{equation:third-half-diagram}, we obtain a commutative diagram
$$
\xymatrix{
&&&\hat{W}\ar@{->}[ld]_{\hat{\omega}_2}\ar@{->}[rd]^{\hat{\omega}_1}&&W\ar@{->}[ld]_{\omega_2}\ar@{->}[rd]^{\omega_1}&\\
&&\bar{X}_2\ar@{->}[dddd]^{\bar{p}_2}\ar@{->}[rrdd]^{f^P_2}\ar@{->}[lldd]_{f^Q_2}&&\hat{X}_1\ar@{->}[drr]^{g_1}\ar@{-->}[ll]_{\phi}&&\bar{X}_1\ar@{->}[drr]^{f_1}\ar@{-->}[ll]_{\psi}&\\
&&&&&&\check{X}_1\ar@{->}[ddd]^{\check{p}_1}&&X_1\ar@{-->}[ll]_{\sigma}\ar@{-->}[dddll]^{p_1}\ar@{->}[ddr]^{f}&\\
 X_2^Q\ar@{->}[ddrr]_{p_2^Q}&&&& X_2^P\ar@{->}[ddll]^{p_2^P}&&&&&\\
&&&&&\smiley&&&&X_0\ar@{->}[d]^{\pi}\\
&&U_2\ar@{->}[rrrr]^{f_{\xi_1}}&&&&U_1\ar@{->}[rrr]^{f_\xi}&&&\mathbb{P}^2}
$$

\subsection*{}

Now we are ready to finish the proof.

\begin{proof}[{Proof of Theorem~\ref{theorem:conic-bundle-standard-weak-del-Pezzo}}]
Note that  Constructions~I, II, and III are local over the base of  the conic bundle.
Thus, they are applicable not only to the conic bundle $\pi$ but also to any other conic bundle
which is obtained from $\pi\colon X_0\to\P^2$ by a birational transformation that is local over the base,
provided that they are carried out over neighborhoods of points of the base not influenced by this transformation.

We start with the conic bundle $\pi\colon X_0\to\P^2$.
Keeping in mind Proposition~\ref{proposition:C6} and
applying Construction~I in neighborhoods of points of $\mathbb{P}^2$ where the curve
$C$ has a node and over which the fiber
of $\pi$ contains a singular point
of $X_0$, we obtain the birational
morphism~\mbox{$\varrho_n\colon U_n\to \mathbb{P}^2$}.
Applying Construction~II in neighborhoods of points of $U_n$
where the proper transform of $C$ has cusps,
we obtain the birational morphism~\mbox{$\varrho_c\colon U_c\to U_n$}.
Finally, applying Construction~III in neighborhoods of points of $U_c$
where the proper transform of $C$ has tacnodes, we obtain
the birational morphisms~\mbox{$\varrho_t\colon U_t\to U_c$}
and~\mbox{$\varrho_t'\colon U\to U_t$}.
We put
$$
\varrho=\varrho_n\circ\varrho_c\circ\varrho_t\circ\varrho_t'.
$$
We denote by $\rho\colon V\dasharrow X_0$ the birational map provided
by Constructions~I, II, and~III. Choosing the conic bundle $p_2\colon X_2\to U_2$
after performing Constructions~I or II, and choosing any of the conic bundles $p_2^P\colon X_2^P\to U_2$
or $p_2^Q\colon X_2^Q\to U_2$ after performing Construction~III,
we finally obtain a conic bundle~\mbox{$\nu\colon V\to U$}.
This completes the diagram~\eqref{equation:standard-conic-bundle}.

We already know that the threefold $V$ and the surface $U$ are smooth.
Also, we know from Lefschetz theorem that
$\mathrm{rk~Pic}(X)=1$, so that
$\mathrm{rk~Cl}(X)=1$ by $\mathbb{Q}$-factoriality assumption.
Keeping track of the blow ups we make in course of our
construction, we see that at the starting point we have
an equality $\mathrm{rk~Cl}(X_0/\mathbb{P}^2)=1$, and
Constructions~I, II and~III preserve this equality.
At the end of the day we arrive to smooth varieties $V$ and~$U$,
and thus conlcude that
$$
\mathrm{rk~Pic}(V/U)=\mathrm{rk~Cl}(V/U)=1.
$$
This means that $\nu$ is a standard conic bundle.
Since the divisor $-K_V$ is $\nu$-ample and $U$ is a projective surface,
we see that $V$ is also projective (although a result of a flop may a priori be not projective).

Other assertions of the theorem hold by construction.
\end{proof}

Constructions~I, II, and~III are analogues of the constructions in the proof of \cite[Proposition~2.4]{Sarkisov1}.

\medskip

Theorem~\ref{theorem:conic-bundle-standard-weak-del-Pezzo} gives the following.

\begin{corollary}
\label{corollary:Shokurov-factorial}
Conjecture~\ref{conjecture:Shokurov} implies Conjecture~\ref{conjecture:factorial}.
\end{corollary}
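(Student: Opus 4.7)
The heart of the matter is already assembled in the discussion preceding the corollary: we have reduced to the case in which $S$ is singular, $X$ is $\mathbb{Q}$-factorial, and Theorem~\ref{theorem:conic-bundle-standard-weak-del-Pezzo} yields a birational map $\rho\colon V\dasharrow X$ from a smooth threefold $V$ carrying a standard conic bundle $\nu\colon V\to U$, where $U$ is rational (it is a tower of blow ups of $\P^2$) and the degeneration curve $\Delta$ satisfies $\Delta\sim -2K_U$. The smooth case of $S$ is handled by Voisin, and the six-node non-$\Q$-factorial exception factors through a smooth cubic threefold and is irrational by Clemens--Griffiths; so it suffices to prove $V$ irrational under the standing assumptions.

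The plan is simply to feed this data into Conjecture~\ref{conjecture:Shokurov}. From $\Delta\sim -2K_U$ we obtain
\[
2K_U+\Delta\sim 0,
\]
so the zero divisor lies in $|2K_U+\Delta|$; in particular $|2K_U+\Delta|\neq\varnothing$. Since $\nu$ is a standard conic bundle over a rational surface, Conjecture~\ref{conjecture:Shokurov} applied to $\nu$ gives that $V$ is irrational, and since $X$ is birational to $V$, the threefold $X$ is irrational.

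There is essentially no obstacle: the entire content of the corollary is that the specific linear equivalence $\Delta\sim -2K_U$ guaranteed by Theorem~\ref{theorem:conic-bundle-standard-weak-del-Pezzo}(iii) is exactly the non-vanishing hypothesis required by Shokurov's conjecture. The only subtlety worth flagging in the write-up is the verification that the conic bundle produced by Theorem~\ref{theorem:conic-bundle-standard-weak-del-Pezzo} really satisfies the running assumptions of Section~\ref{section:conic-bundles} (namely, $V$ and $U$ smooth and $U$ rational, so that Conjecture~\ref{conjecture:Shokurov} is applicable as stated), and the reduction at the beginning of the section disposing of the smooth quartic and the six-node cubic exception, both of which are already in place.
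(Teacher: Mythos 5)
Your proposal is correct and follows essentially the same route as the paper: the entire content is that Theorem~\ref{theorem:conic-bundle-standard-weak-del-Pezzo}(iii) gives $\Delta\sim -2K_U$, hence $2K_U+\Delta\sim 0$ and $|2K_U+\Delta|\neq\varnothing$, so Conjecture~\ref{conjecture:Shokurov} applied to the standard conic bundle $\nu\colon V\to U$ yields irrationality of $V$ and hence of $X$. The preliminary reductions you flag (smooth $S$ via Voisin, and the applicability of the Section~\ref{section:conic-bundles} setup) are exactly those carried out in the paper's discussion preceding the corollary.
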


\begin{proof}
We have $2K_U+\Delta\sim 0$ by Theorem~\ref{theorem:conic-bundle-standard-weak-del-Pezzo}(iii),
so that the linear system $|2K_U+\Delta|$ is not empty.
Thus, the irrationality of $X$ follows from Conjecture~\ref{conjecture:Shokurov}.
\end{proof}

\section{Irrational quartic double solids}
\label{section:factorial}

The main goal of this section is to prove Theorem~\ref{theorem:CPS}.
To achieve it,
we need the following straightforward result.

\begin{proposition}
\label{proposition:blow-up-plane}
Let $U$ be a smooth surface, and let $\Delta$ be a reduced curve in $U$.
Suppose that
$$
\Delta\sim -2K_U,
$$
the curve $\Delta$ is not a smooth rational curve, and $\Delta$ satisfies
conditions $(\mathrm{A})$ and $(\mathrm{B})$ in Corollary~\ref{corollary:stability}.
Then $-K_U$ is numerically effective (nef).
\end{proposition}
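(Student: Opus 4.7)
The plan is to argue by contradiction. Suppose there exists an irreducible curve $L \subset U$ with $-K_U \cdot L < 0$, i.e.\ $K_U \cdot L \geqslant 1$. Since $\Delta \sim -2K_U$, the intersection $\Delta \cdot L$ equals $-2 K_U \cdot L < 0$. If $L$ were not an irreducible component of $\Delta$, then $\Delta \cdot L$ would be non-negative as a proper intersection of effective divisors. Hence $L$ must be a component of $\Delta$.

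Next I would isolate the connected component $\Delta_0$ of $\Delta$ containing $L$, and write $\Delta_0 = L + L'$, where $L'$ is the sum of the remaining components of $\Delta_0$ (possibly zero). Since $\Delta_0$ is disjoint from $\Delta - \Delta_0$, I get $\Delta_0^2 = \Delta_0 \cdot \Delta = -2K_U \cdot \Delta_0$. Expanding $L \cdot \Delta_0 = L^2 + L \cdot L' = -2K_U \cdot L$ and substituting $L^2 = 2 p_a(L) - 2 - K_U \cdot L$ from adjunction yields the key identity
\[
L \cdot L' \;=\; 2 - 2p_a(L) - K_U \cdot L.
\]
If $L' = 0$, then $L = \Delta_0$ is a whole connected component with $L \cdot L' = 0$, so $p_a(L) = 1 - \frac{1}{2}K_U \cdot L < 1$, forcing $p_a(L) = 0$ and $K_U \cdot L \in \{1,2\}$; since $\Delta$ is nodal (being the discriminant of a standard conic bundle, or by the same arithmetic-genus formula applied to an irreducible curve), $p_a(L) = 0$ means $L \cong \mathbb{P}^1$, contradicting condition (B) (or the standing hypothesis that $\Delta$ is not a smooth rational curve, if $\Delta = L$).

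If $L' \neq 0$, then $\Delta_0$ being connected forces $L \cdot L' \geqslant 1$. Combined with $p_a(L) \geqslant 0$ and $K_U \cdot L \geqslant 1$, the identity above squeezes all three quantities: $K_U \cdot L = 1$, $p_a(L) = 0$, and $L \cdot L' = 1$. In particular $L \cong \mathbb{P}^1$ and $L$ meets $L'$ at a single point transversally, so $|L \cap L'| = 1$. Since $L$ is disjoint from the other connected components of $\Delta$, one has $|L \cap (\Delta - L)| = |L \cap L'| = 1$, which is odd. This contradicts condition (A) applied to the splitting $\Delta = L \cup (\Delta - L)$, completing the proof.

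The main work is really the adjunction calculation in the second paragraph; once the identity $L \cdot L' = 2 - 2p_a(L) - K_U \cdot L$ is in hand, everything is forced. The only place to be careful is the translation between the intersection number $L \cdot L'$ and the set-theoretic count $|L \cap L'|$ used in condition (A): this is harmless here because we only invoke it when $L \cdot L' = 1$, in which case the intersection is automatically a single transversal point.
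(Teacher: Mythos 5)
Your proof is correct and follows essentially the same route as the paper's: the offending curve is forced to be a component of $\Delta$, adjunction forces it to be a smooth rational curve meeting the rest of $\Delta$ in at most one point, and conditions (A) and (B) give the contradiction. Your identity $L\cdot L' = 2 - 2p_a(L) - K_U\cdot L$ is just a repackaging of the paper's $4p_a(\Delta_1)-4=\Delta_1^2-\Delta_1\cdot\Delta_2$, so the two arguments amount to the same computation.
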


\begin{proof}
Suppose that $-K_U$ is not nef.
Then there is an irreducible curve $\Delta_1\subset U$
such that~\mbox{$\Delta\cdot \Delta_1<0$}.
This, in particular, means that $\Delta_1$ is an irreducible component of $\Delta$.

We claim that $\Delta$ is a reducible curve. Indeed, if $\Delta=\Delta_1$, then $\Delta^2<0$.
On the other hand, the adjunction formula implies that
$$
2p_a(\Delta)-2=\Delta^2+K_U\cdot \Delta=\Delta^2-\frac{\Delta^2}{2}=\frac{\Delta^2}{2},
$$
where $p_a(\Delta)=1-\chi(\mathcal{O}_{\Delta})$ is the arithmetic genus of $\Delta$.
Thus, if $\Delta$ is irreducible, then its arithmetic genus must be zero,
so that $\Delta$ is a smooth rational curve.
The latter is impossible, because $\Delta$ satisfies
condition $(\mathrm{B})$ of Corollary~\ref{corollary:stability}.

We see that $\Delta$ is reducible, and $\Delta_1$ is its irreducible component.
Denote the union of its remaining irreducible components by $\Delta_2$,
so that $\Delta=\Delta_1\cup\Delta_2$. Then
$$
0>\Delta\cdot \Delta_1=\Delta_1^2+\Delta_1\cdot \Delta_2.
$$
On the other hand, the adjunction formula gives
$$
2p_a(\Delta_1)-2=\Delta_1^2+K_U\cdot \Delta_1=\Delta_1^2-\left(\frac{\Delta_1+\Delta_2}{2}\right)\cdot \Delta_1=\frac{\Delta_1^2}{2}-\frac{\Delta_1\cdot \Delta_2}{2},
$$
so that
$$4p_a(\Delta_1)-4=\Delta_1^2-\Delta_1\cdot \Delta_2.$$
Thus, if $p_a(\Delta_1)>0$, then
$$
0\leqslant 4p_a(\Delta_1)-4=\Delta_1^2-\Delta_1\cdot \Delta_2<-2\Delta_1\cdot \Delta_2,
$$
which gives a contradiction with $\Delta_1\cdot \Delta_2\geqslant 0$.
Hence, we have
$p_a(\Delta_1)=0$, which implies that $\Delta_1$ is a smooth rational curve.
Therefore
$$-4=\Delta_1^2-\Delta_1\cdot \Delta_2$$
by the adjunction formula.
Since $\Delta_1^2+\Delta_1\cdot \Delta_2<0$ by assumption, we have $\Delta_1^2<-2$.
Thus
$$
-4=\Delta_1^2-\Delta_1\cdot \Delta_2<-2-\Delta_1\cdot \Delta_2,
$$
which gives $\Delta_1\cdot \Delta_2\leqslant 1$.
This is impossible, because  $\Delta$ satisfies
conditions $(\mathrm{A})$ and $(\mathrm{B})$
of Corollary~\ref{corollary:stability}.
\end{proof}

Now let $\tau\colon X\to\mathbb{P}^3$ be a double cover branched over a nodal quartic surface $S$.
To prove Theorem~\ref{theorem:CPS},
we must prove that $X$ is irrational
when $S$ has at most six nodes.

If $S$ is smooth, then $X$ is irrational by \cite[Corollary~4.7(b)]{Voisin88}.
Thus, we may assume that $S$ is singular.
If  $S$ has at most five nodes, then $X$ is  $\mathbb{Q}$-factorial by Theorem~\ref{theorem:Cheltsov}.
Similarly, if $S$ has exactly six nodes, then $X$ is  $\mathbb{Q}$-factorial by Theorem~\ref{theorem:Cheltsov}
with the only exception when $X$ is birational to a smooth cubic
threefold in $\mathbb{P}^4$, and thus irrational by~\cite[Theorem~13.12]{ClemensGriffiths}.
Therefore, we may also assume that $X$ is $\mathbb{Q}$-factorial.
Thus, we can apply all results of Sections~\ref{section:double-solids} and \ref{section:standard} to $X$.

By Theorem~\ref{theorem:conic-bundle-standard-weak-del-Pezzo},
the threefold $X$ is birational to a smooth threefold $V$ with a structure
of a standard conic bundle $\nu\colon V\to U$ and there exists a birational morphism
$\varrho\colon U\to\mathbb{P}^2$
that is a composition of $|\mathrm{Sing}(S)|-1$ blow ups.
Denote by $\Delta$ the degeneration curve of the conic bundle $\nu$.
In particular, there exists a pair $(\Delta^\prime,I)$ of a connected nodal curve~$\Delta^\prime$ and an involution $I$ on it
such that $\Delta\cong\Delta^\prime/I$, the nodes of $\Delta^\prime$ are exactly the fixed points of $I$,
and $I$ does not interchange branches at these points.
One has $\Delta\sim -2K_U$ by Theorem~\ref{theorem:conic-bundle-standard-weak-del-Pezzo}(iii).

By Corollary~\ref{corollary:stability},
the curve $\Delta$ satisfies its conditions
$(\mathrm{A})$ and $(\mathrm{B})$.
Thus, $-K_U$ is nef by Proposition~\ref{proposition:blow-up-plane}.
Put $d=K_U^2$. Then
$$
d=10-\big|\mathrm{Sing}(S)\big|
$$
by Theorem~\ref{theorem:conic-bundle-standard-weak-del-Pezzo}(i).

\medskip

Now we are ready to prove Theorem~\ref{theorem:CPS}.
Until the end of the section we assume
that~$S$ has at most six nodes, so that $d\geqslant 4$.
In particular, $U$ is a \emph{weak del Pezzo surface} (see~\cite{Demazure}).

\begin{lemma}
\label{lemma:weak-Del-Pezzo-Delta-connected}
The curve $\Delta$ is connected.
\end{lemma}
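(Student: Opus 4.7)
The plan is to apply Kawamata--Viehweg vanishing and argue from a standard short exact sequence on $U$. Since $|\mathrm{Sing}(S)|\leqslant 6$, we have $d=K_U^2\geqslant 4>0$, so the nef divisor $-K_U$ is also big; consequently $-2K_U$ is nef and big as well. Kawamata--Viehweg vanishing applied to $-2K_U$ then gives
$$H^1\bigl(U,\mathcal{O}_U(K_U+(-2K_U))\bigr)=H^1\bigl(U,\mathcal{O}_U(-K_U)\bigr)=0,$$
and Serre duality translates this into $H^1(U,\mathcal{O}_U(2K_U))=0$.

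Using $\Delta\sim -2K_U$, I would then consider the short exact sequence
$$0\to\mathcal{O}_U(2K_U)\to\mathcal{O}_U\to\mathcal{O}_\Delta\to 0$$
on $U$. Its long exact sequence in cohomology, combined with the vanishing just obtained, produces a surjection
$$\mathbb{C}=H^0(U,\mathcal{O}_U)\twoheadrightarrow H^0(\Delta,\mathcal{O}_\Delta),$$
which forces $H^0(\Delta,\mathcal{O}_\Delta)=\mathbb{C}$, i.e.\ $\Delta$ is connected.

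There is essentially no obstacle here: the entire argument hinges on the vanishing, and the hypothesis $|\mathrm{Sing}(S)|\leqslant 6$ enters only through $d\geqslant 4$, which yields the bigness of $-K_U$. Nefness of $-K_U$ has already been established immediately before the statement of the lemma via Proposition~\ref{proposition:blow-up-plane}, whose hypothesis that $\Delta$ is not a smooth rational curve is automatic from condition~$(\mathrm{B})$ of Corollary~\ref{corollary:stability} (no connected component of $\Delta$ is isomorphic to $\mathbb{P}^1$).
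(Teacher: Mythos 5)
Your argument is correct and is essentially the paper's proof: both deduce connectedness of $\Delta\sim-2K_U$ from Kawamata--Viehweg vanishing (using that $-K_U$ is nef with $K_U^2=d\geqslant 4>0$, hence big) applied through the ideal-sheaf sequence of $\Delta$ in $U$. Your version merely spells out the Serre duality step and the surjection $H^0(\mathcal{O}_U)\twoheadrightarrow H^0(\mathcal{O}_\Delta)$, which the paper leaves implicit.
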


\begin{proof}
Since $-K_U$ is nef and big, we have $h^1(\mathcal{O}_U(-2K_U))=0$ by the Kawamata--Viehweg vanishing theorem (see \cite{Kawamata-vanishing}).
This implies connectedness of $\Delta$, because $\Delta\sim -2K_U$.
\end{proof}

We plan to apply Theorem~\ref{theorem:Shokurov} to $V$.
Unfortunately, the curve $\Delta$ may not satisfy
condition~$(\mathrm{S})$.
Luckily, we can explicitly describe each case when $\Delta$
does not satisfy it.
This description is given by the following three lemmas.

\begin{lemma}
\label{lemma:weak-Del-Pezzo-very-easy}
Let $E$ be a $(-2)$-curve on $U$.
Then either $E$ is contracted by $\varrho$ to a point,
or $\varrho(E)$ is a line in~$\P^2$.
Moreover, either $E$ is disjoint from $\Delta$, or $E$ is an irreducible component of $\Delta$.
Furthermore, if $E$ is an irreducible component of $\Delta$,
then it intersects the curve~\mbox{$\Delta-E$} by two points.
In particular, if $\Delta$ satisfies condition  $(\mathrm{S})$
of Theorem~\ref{theorem:Shokurov}, then $E$ is disjoint from $\Delta$.
\end{lemma}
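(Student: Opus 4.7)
The plan is to encode the three key facts $E^{2}=-2$, $-K_U\cdot E=0$ and $\Delta\sim -2K_U$ into intersection-theoretic inequalities and read everything off from them. I will first bound the degree of $\varrho(E)$ in $\P^{2}$ via Cauchy--Schwarz, and then deduce the position of $E$ relative to $\Delta$ from two short intersection number computations.

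For the first assertion, I factor $\varrho$ as a sequence of $k=|\mathrm{Sing}(S)|-1\le 5$ blow-ups of (possibly infinitely near) points and use the corresponding basis $L,E_1,\ldots,E_k$ of $\mathrm{Pic}(U)$, with $L=\varrho^{*}\mathcal{O}_{\P^{2}}(1)$, each $E_i$ the total transform on $U$ of the $i$-th exceptional divisor, $L^{2}=1$, $E_i^{2}=-1$, $L\cdot E_i=E_i\cdot E_j=0$ for $i\ne j$, and $-K_U\sim 3L-\sum E_i$. If $\varrho(E)$ is a curve of degree $a\ge 1$, then $E$ is its proper transform, so $E\sim aL-\sum m_i E_i$ with non-negative integer multiplicities $m_i$ (the multiplicities of successive proper transforms of $\varrho(E)$ at the blown-up points). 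The conditions $E^{2}=-2$ and $-K_U\cdot E=0$ translate to $\sum m_i^{2}=a^{2}+2$ and $\sum m_i=3a$, and the Cauchy--Schwarz inequality gives
$$
9a^{2}=\Bigl(\sum m_i\Bigr)^{2}\le k\sum m_i^{2}\le 5(a^{2}+2),
$$
which forces $a\le 1$. If instead $\varrho(E)$ is a point, then $\varrho$ contracts $E$; these two cases cover the first assertion.

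For the second and third assertions, $\Delta\sim -2K_U$ gives directly $\Delta\cdot E=-2K_U\cdot E=0$. Since $E$ is irreducible, either $E$ is a component of $\Delta$, or $E$ and $\Delta$ share no components; in the latter case the effectivity of both cycles turns the vanishing intersection number into honest disjointness. If $E$ is a component of $\Delta$, writing $\Delta=E+(\Delta-E)$ and using $E^{2}=-2$ yields
$$
E\cdot(\Delta-E)=E\cdot\Delta-E^{2}=0-(-2)=2.
$$
The final assertion is then immediate: if $E\subset\Delta$, setting $\Delta_1=E$ and $\Delta_2=\Delta-E$ gives a splitting with $|\Delta_1\cap\Delta_2|\le E\cdot(\Delta-E)=2<4$, contradicting condition $(\mathrm{S})$.

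The main obstacle is the first assertion, where one must rule out curves $\varrho(E)$ of degree $\ge 2$; the hypothesis $|\mathrm{Sing}(S)|\le 6$ enters precisely through the bound $k\le 5$ needed to close the Cauchy--Schwarz estimate. The remaining assertions are formal manipulations of intersection numbers once $\Delta\sim -2K_U$ is in hand from Theorem~\ref{theorem:conic-bundle-standard-weak-del-Pezzo}(iii).
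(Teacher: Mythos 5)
Your proposal is correct and follows essentially the same route as the paper: the second and third assertions are proved by exactly the same two intersection-number computations $\Delta\cdot E=-2K_U\cdot E=0$ and $(\Delta-E)\cdot E=2$, and your Cauchy--Schwarz estimate is just a fully written-out version of the paper's terse justification ``because $d\geqslant 4$'' for the first assertion (indeed $9a^2\leqslant k(a^2+2)$ with $k\leqslant 5$ forces $a\leqslant 1$, and the bound visibly fails at $k=6$, which is where conic-class $(-2)$-curves first appear). The only cosmetic omission is that to get \emph{exactly} two intersection points, rather than $|E\cap(\Delta-E)|\leqslant 2$, one should invoke the nodality of $\Delta$ so that each intersection point contributes $1$ to the intersection number; this does not affect the ``in particular'' conclusion about condition $(\mathrm{S})$.
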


\begin{proof}
If $E$ is not contracted by $\varrho$ to a point, then $\varrho(E)$ is a line, because $d\geqslant 4$.
If $E$ is not an irreducible component of the curve $\Delta$, then
$$
\Delta\cdot E=-2K_U\cdot E=0,
$$
which implies that $E$ is disjoint from $\Delta$.
If $E$ is an irreducible component of $\Delta$, then
$$
(\Delta-E)\cdot E=(-2K_U-E)\cdot E=2.
$$
Since $\Delta$ is nodal, this means that $\Delta-E$ intersects $E$ by two points.
In particular, $\Delta$ does not satisfy condition~$(\mathrm{S})$
of Theorem~\ref{theorem:Shokurov} in this case.
\end{proof}

\begin{lemma}
\label{lemma:weak-Del-Pezzo-easy-1}
At most two irreducible components of the curve $\Delta$ are $(-2)$-curves.
Moreover, all other $(-2)$-curves on $U$ are disjoint from $\Delta$.
Furthermore, for the curve $\Delta$ we have only the following possibilities:
\begin{itemize}
\item the curve $\Delta$ contains a unique $(-2)$-curve $E$ in $U$ and
$$
\Delta=E+\Omega,
$$
where $\Omega$ is a nodal curve such that $\varrho(\Omega)$ is a (possibly reducible) quintic curve,~$\varrho(E)$ is a line, and $E\cap\Omega$ consists of two points;

\item the curve $\Delta$ contains two $(-2)$-curves $E_1$ and $E_2$ in $U$,
the curves $E_1$ and $E_2$ are disjoint, and
$$
\Delta=E_1+E_2+\Upsilon,
$$
where $\Upsilon$ is a nodal curve such that $\varrho(\Upsilon)$ is a (possibly reducible) quartic curve,~$\varrho(E_1)$ and~$\varrho(E_1)$  are lines, and each intersection $E_1\cap\Upsilon$ or $E_1\cap\Upsilon$ consists of two points.
\end{itemize}
\end{lemma}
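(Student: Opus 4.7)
The plan is to translate the condition ``a $(-2)$-curve is a component of $\Delta$'' into combinatorics in $\mathrm{Pic}(U)$, and then bound the number of such components by inclusion--exclusion using that $U$ is obtained from $\mathbb{P}^2$ by at most five blow ups. By Lemma~\ref{lemma:weak-Del-Pezzo-very-easy}, any $(-2)$-curve on $U$ is either disjoint from $\Delta$ or appears as an irreducible component meeting $\Delta-E$ in exactly two points (distinct, since $\Delta$ is nodal); this already handles the second sentence of the lemma. Moreover, by Theorem~\ref{theorem:conic-bundle-standard-weak-del-Pezzo}(iii) no component of $\Delta$ is contracted by $\varrho$, so any such $(-2)$-curve is mapped birationally by $\varrho$ onto a line in $\mathbb{P}^2$.

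To carry out the lattice computation, I would write $\varrho$ as a composition of $n=9-d\leqslant 5$ blow ups, let $F_1,\ldots,F_n$ be the total transforms on $U$ of the successive exceptional divisors, and set $H=\varrho^*\mathcal{O}_{\mathbb{P}^2}(1)$. These classes form an orthogonal basis of $\mathrm{Pic}(U)\otimes\mathbb{Q}$ with $H^2=1$, $F_i^2=-1$, and $K_U\sim -3H+\sum_i F_i$. If $E$ is a $(-2)$-curve whose image is a line, then $E\sim H-\sum_i m_iF_i$ with $m_i\in\mathbb{Z}_{\geqslant 0}$, and the relations $E\cdot K_U=0$, $E^2=-2$ reduce to $\sum m_i=\sum m_i^2=3$, forcing exactly three of the $m_i$ to equal $1$ and the rest to vanish; denote this index set by $I(E)\subset\{1,\ldots,n\}$. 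For two distinct $(-2)$-curves $E,E'$ among the components of $\Delta$ one computes $E\cdot E'=1-|I(E)\cap I(E')|$, so $|I(E)\cap I(E')|\leqslant 1$ (from $E\cdot E'\geqslant 0$), while the inclusion $I(E)\cup I(E')\subseteq\{1,\ldots,n\}$ combined with $|I(E)|+|I(E')|=6$ forces $|I(E)\cap I(E')|\geqslant 6-n\geqslant 1$. Hence $|I(E)\cap I(E')|=1$, $E\cdot E'=0$, and $E$ and $E'$ are disjoint. Applying inclusion--exclusion to three $(-2)$-curve components $E_1,E_2,E_3$ would give
\begin{equation*}
|I(E_1)\cup I(E_2)\cup I(E_3)|\geqslant 9-\sum_{i<j}|I(E_i)\cap I(E_j)|\geqslant 6,
\end{equation*}
contradicting $n\leqslant 5$; thus $\Delta$ contains at most two $(-2)$-curves.

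With this in hand, the case-by-case description is straightforward. When $E$ is the unique $(-2)$-curve of $\Delta$, set $\Omega=\Delta-E$: it is nodal as a subcurve of the nodal curve $\Delta$, $|E\cap\Omega|=E\cdot\Omega=2$ by Lemma~\ref{lemma:weak-Del-Pezzo-very-easy}, and since no component of $\Omega$ is contracted by $\varrho$ we get $\varrho_*\Omega=\varrho_*\Delta-\varrho_*E=C-\varrho(E)$ of degree $5$. When there are two $(-2)$-curve components $E_1,E_2$, they are disjoint by the step above, and setting $\Upsilon=\Delta-E_1-E_2$ gives a nodal curve with each $E_i\cap\Upsilon$ consisting of two distinct points and $\varrho_*\Upsilon=C-\varrho(E_1)-\varrho(E_2)$ of degree $4$. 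The heart of the argument and the main obstacle is the Picard-lattice bound ruling out three $(-2)$-curves in $\Delta$; once that is established the remaining assertions are pure intersection-theoretic bookkeeping.
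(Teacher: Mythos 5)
Your proof is correct and follows essentially the same route as the paper: the paper's argument is exactly that a $(-2)$-curve component of $\Delta$ must map to a line through three of the blown-up points, so three such components would require at least six blown-up points, contradicting $d\geqslant 4$; you have merely recast this counting in the Picard lattice via the sets $I(E)$ and inclusion--exclusion. A minor bonus of your write-up is that the same computation ($|I(E_1)\cap I(E_2)|=1$, hence $E_1\cdot E_2=0$) makes explicit the disjointness of $E_1$ and $E_2$, which the paper subsumes under ``all remaining assertions easily follow.''
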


\begin{proof}
Suppose that at least three irreducible components of $\Delta$ are  $(-2)$-curves.
Denote them by $E_1$, $E_2$, and $E_3$.
Then $\varrho$ maps them to lines in $\mathbb{P}^2$ by Lemma~\ref{lemma:weak-Del-Pezzo-very-easy}.
This implies that $\varrho$ blows up at least six points on these lines,
which is impossible, because we assume that $d\geqslant 4$.
This shows that at most two irreducible components of the curve $\Delta$ are $(-2)$-curves.
All remaining assertions easily follow from Lemma~\ref{lemma:weak-Del-Pezzo-very-easy}.
\end{proof}

\begin{lemma}
\label{lemma:weak-Del-Pezzo-easy-3}
Suppose that $\Delta$ does not satisfy condition $(\mathrm{S})$ of Theorem~\ref{theorem:Shokurov},
i.~e. there is a splitting $\Delta=\Delta_1\cup\Delta_2$ such that $\Delta_1\cdot\Delta_2=2$.
Then either $\Delta_1$ or $\Delta_2$ is a $(-2)$-curve.
\end{lemma}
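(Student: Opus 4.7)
The plan is to combine adjunction on $U$ with the Hodge index theorem, and then to invoke the structural constraints on $(-2)$-curves supplied by Lemma~\ref{lemma:weak-Del-Pezzo-easy-1}. Since $\Delta$ is reduced and nodal, the splitting $\Delta=\Delta_1\cup\Delta_2$ corresponds to $\Delta=\Delta_1+\Delta_2$ as effective divisors with no common components. Using $\Delta\sim -2K_U$ and the hypothesis $\Delta_1\cdot\Delta_2=2$, I first rewrite
$$
-2K_U\cdot\Delta_1=\Delta_1\cdot\Delta=\Delta_1^2+2
$$
and substitute into the adjunction formula $2p_a(\Delta_1)-2=\Delta_1^2+K_U\cdot\Delta_1$ to obtain the two identities
$$
\Delta_1^2=4p_a(\Delta_1)-2,\qquad -K_U\cdot\Delta_1=2p_a(\Delta_1),
$$
and symmetrically for $\Delta_2$. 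Since $-K_U$ is nef, both arithmetic genera $p_a(\Delta_i)$ are non-negative.

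I then split into cases according to whether some $p_a(\Delta_i)$ vanishes. By symmetry, assume $p_a(\Delta_1)=0$, so $\Delta_1^2=-2$ and $-K_U\cdot\Delta_1=0$. Nefness of $-K_U$ forces every irreducible component $C$ of $\Delta_1$ to satisfy $-K_U\cdot C=0$; the Hodge index theorem (applied using $(-K_U)^2=d\geqslant 4>0$) then gives $C^2\leqslant 0$ with equality only when $C$ is numerically trivial, and combining this with the adjunction inequality $C^2=2p_a(C)-2\geqslant-2$ forces $C^2=-2$ and $p_a(C)=0$, so that $C$ is a $(-2)$-curve on $U$. By Lemma~\ref{lemma:weak-Del-Pezzo-easy-1} the curve $\Delta$ contains at most two $(-2)$-curves, and a pair of such curves in $\Delta$ is disjoint; the latter possibility would give $\Delta_1^2=-4\neq-2$, so $\Delta_1$ must be a single $(-2)$-curve, which is the desired conclusion.

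The only remaining case is $p_a(\Delta_1)\geqslant 1$ and $p_a(\Delta_2)\geqslant 1$, which I rule out as follows. Both $\Delta_i^2=4p_a(\Delta_i)-2\geqslant 2$ are then strictly positive, so the reverse Cauchy--Schwarz form of the Hodge index theorem applied to $\Delta_1$ and $\Delta_2$ yields
$$
4=(\Delta_1\cdot\Delta_2)^2\geqslant\Delta_1^2\cdot\Delta_2^2=(4p_a(\Delta_1)-2)(4p_a(\Delta_2)-2)\geqslant 4.
$$
Equality throughout forces $p_a(\Delta_1)=p_a(\Delta_2)=1$, whence
$$
2d=-K_U\cdot\Delta=-K_U\cdot\Delta_1+(-K_U)\cdot\Delta_2=4,
$$
contradicting $d\geqslant 4$.

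The step I expect to require the most care is the passage from ``every irreducible component of $\Delta_1$ is a $(-2)$-curve and $\Delta_1^2=-2$'' to the conclusion that $\Delta_1$ is \emph{a single} $(-2)$-curve: without Lemma~\ref{lemma:weak-Del-Pezzo-easy-1} one could not exclude a connected chain or tree of several $(-2)$-curves on $U$, which would also have self-intersection $-2$. Everything else reduces to direct computation once the two adjunction identities for the $\Delta_i$ are available.
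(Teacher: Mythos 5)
Your proof is correct, and the identities $\Delta_i^2=4p_a(\Delta_i)-2$ and $-K_U\cdot\Delta_i=2p_a(\Delta_i)$ are exactly the right normal form for this computation. The paper's argument shares the same skeleton (adjunction plus the Hodge index theorem, reducing to the case where one $\Delta_i$ has non-positive arithmetic genus and self-intersection $-2$), but the two proofs diverge in how they finish. The paper never invokes nefness of $-K_U$ here: it first proves that $\Delta_1$ and $\Delta_2$ are not numerically proportional so as to get the \emph{strict} inequality $\Delta_1^2\Delta_2^2<4$, and then kills the possibility that the genus-zero piece is reducible by a degree count through $\varrho$ (a reducible $\Delta_1$ forces $\deg\varrho(\Delta_2)\leqslant 4$ while $\Delta_2^2\geqslant 4d-2$, leading to $2=\Delta_1\cdot\Delta_2\geqslant d-1\geqslant 3$). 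You instead use nefness of $-K_U$ (from Proposition~\ref{proposition:blow-up-plane}) twice: once to get $p_a(\Delta_i)\geqslant 0$, which lets you handle the Hodge-index equality case directly via $d=p_a(\Delta_1)+p_a(\Delta_2)=2$ without any proportionality discussion, and once to show that every component of the genus-zero piece is a $(-2)$-curve, after which Lemma~\ref{lemma:weak-Del-Pezzo-easy-1} (legitimately available, as its proof does not depend on this lemma) reduces to a single $(-2)$-curve since two such components would be disjoint and give $\Delta_1^2=-4$. Your route is somewhat cleaner and leans more on the structural results already established; the paper's is more self-contained at this point. Both are valid.
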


\begin{proof}
We claim that $\Delta_1$ and $\Delta_2$ are linearly
independent in $\mathrm{Pic}(S)\otimes\mathbb{Q}$.
Indeed, let~\mbox{$\Delta_1\sim_{\mathbb{Q}} \lambda \Delta_2$}
for some rational number $\lambda$. Interchanging $\Delta_1$ and $\Delta_2$ if needed one can assume that $\lambda\geq 1$.
One has
$$
2=\Delta_1\cdot\Delta_2=\lambda \Delta_2^2.
$$
Since $\Delta_2^2\geqslant 1$, one has
$$
2=\lambda \Delta_2^2\geqslant\lambda\geqslant 1.
$$
Moreover, one has
$$-K_U\sim_{\mathbb{Q}}\frac{\lambda+1}{2}\Delta_2$$
and
$$
4\leqslant \left(-K_U\right)^2=\left(\frac{\lambda+1}{2}\right)^2\Delta_2^2=\frac{2}{\lambda}\left(\frac{\lambda+1}{2}\right)^2,
$$
which is impossible for $1\leqslant \lambda\leqslant 2$.

Applying the Hodge index Theorem, we get
$$\left|
    \begin{array}{cc}
      \Delta_1^2 & \Delta_1\Delta_2 \\
      \Delta_1\Delta_2 & \Delta_1^2 \\
    \end{array}
  \right|<0,
$$
which means that
\begin{equation}\label{eq:Hodge}
\Delta_1^2\Delta_2^2<\left(\Delta_1\Delta_2\right)^2=4.
\end{equation}

We claim that an arithmetic genus of either
$\Delta_1$ or $\Delta_2$ is non-positive.
Indeed, otherwise
$$
0\leqslant 2p_a(\Delta_i)-2=\Delta_i(\Delta_i+K_U)=\frac{\Delta_i^2}{2}-\frac{\Delta_1\Delta_2}{2}=\frac{\Delta_i^2}{2}-1.
$$
This means that $\Delta_i^2\geqslant 2$, and thus
$\Delta_1^2\Delta_2^2\geqslant 4$, which contradicts~\eqref{eq:Hodge}.

Without loss of generality, we may assume that $p_a(\Delta_1)\leqslant 0$.
Then $\Delta_1^2\leqslant -2$ by the adjunction formula.
Thus, if $\Delta_1$ is irreducible, then we are done.
Therefore, we assume that $\Delta_1$ has at least two irreducible components.
Then the degree of the curve
$\varrho(\Delta_1)$ is at least two, and thus the degree of the curve
$\varrho(\Delta_2)$ is at most four.
On the other hand, we have
$$
\Delta_2^2\geqslant \Delta_2^2+\Delta_1^2+2=
\left(\Delta_1+\Delta_2\right)^2-2=
\left(-2K_U\right)^2-2=4d-2\geqslant 14,
$$
which implies that the degree of the curve
$\varrho(\Delta_2)$ is exactly four,
$\Delta_1$ has exactly two irreducible components,
and each of these components is mapped by $\varrho$ to a line in $\mathbb{P}^2$.
This is impossible, because
$$
2=\Delta_1\cdot\Delta_2\geqslant\varrho(\Delta_1)\cdot \varrho(\Delta_2)-(9-d)=d-1\geqslant 3,
$$
which is absurd.
\end{proof}

Since $d\geqslant 4$, the linear system $|-K_U|$ is base point free and gives a morphism $\phi\colon U\to\mathbb{P}^d$.
Denote by $Y$ the image of $U$ via $\phi$.
Then $U$ is a del Pezzo surface with du Val singularities,
and $\phi$ induces a birational morphism $\varphi\colon U\to Y$ that contracts all $(-2)$-curves on $U$
to singular points of the surface $Y$.
Since $d\geqslant 4$, the surface $Y$ is an intersection of quadrics in $\mathbb{P}^d$, see, for example, \cite{Demazure}.

Put $\nabla=\varphi(\Delta)$. Then $\nabla\in |-2K_{Y}|$.
By Lemma~\ref{lemma:weak-Del-Pezzo-easy-1},
the curve $\nabla$ is connected and nodal.
Moreover, it follows from Lemma~\ref{lemma:weak-Del-Pezzo-easy-1} and Remark~\ref{remark:Shokurov}
that there exists a pair~\mbox{$(\nabla^\prime,J)$}
of a connected nodal curve $\nabla^\prime$ and an involution $J$ on it
such that $\nabla\cong\nabla^\prime/J$, the nodes of $\nabla^\prime$ are exactly the fixed points of $J$,
the involution $J$ does not interchange branches at these points, and
$$
\mathrm{Prym}\left(\Delta^\prime,I\right)\cong\mathrm{Prym}\left(\nabla^\prime,J\right).
$$

Since $U$ is rational, the exact sequence
$$
0\to \mathcal O_U\to \omega_U\otimes \mathcal O_U(\Delta)\to \omega_\Delta\to 0
$$
implies that there is a surjection
$$H^0(-K_U)=H^0(K_U+\Delta)\twoheadrightarrow H^0(K_\Delta),$$
so for the anticanonical map $\phi$ one has
$$
\left.\phi\right|_{\Delta}=\kappa_\Delta,
$$
where $\kappa_\Delta$ is a canonical map of the curve $\Delta$.
Therefore, we see that $\nabla$ is a connected nodal curve
canonically embedded into $\mathbb{P}^d$, which is an intersection of quadrics.
In particular,~$\nabla$ is not  hyperelliptic by Remark~\ref{remark:hypertrig}.
Moreover, for every splitting $\nabla=\nabla_1\cup\nabla_2$,
the intersection $\nabla_1\cap\nabla_2$ consists of at least $4$ points.
This follows from Corollary~\ref{corollary:stability} and Lemmas~\ref{lemma:weak-Del-Pezzo-easy-1} and \ref{lemma:weak-Del-Pezzo-easy-3}.
Thus, $\mathrm{Prym}(\nabla^\prime,J)$ is not a sum of Jacobians of smooth
curves by Corollary~\ref{corollary:Shokurov}.
On the other hand, Theorem~\ref{theorem:Jac-Prym} implies that
$$
\mathrm{J}(V)\cong\mathrm{Prym}\left(\Delta^\prime,I\right)\cong\mathrm{Prym}\left(\nabla^\prime,J\right),
$$
where $\mathrm{J}(V)$
is the intermediate Jacobian of the threefold $V$.
This shows that $V$ is irrational by \cite[Corollary~3.26]{ClemensGriffiths}
and completes the proof of Theorem~\ref{theorem:CPS}.

\begin{question}
There are three other smooth double covers of smooth varieties among Fano threefolds of Picard rank one: a sextic double solid, a double quadric, and some special variety of type $V_{10}$. The first two are irrational, and a general double cover of type~$V_{10}$ is irrational as well (see, for instance,~\cite[\S12.2]{IsPr99}).
Some of nodal varieties of these three types are proven to be irrational
as well (see, for instance, \cite{CheltsovPark}, \cite[Proposition~3.1]{IKP}, and~\cite{Bea16}
for the sextic double solid, and~\cite{Grin},~\cite{Shramov}, and~\cite{PSh} for the double quadric, and).
In particular, it follows from \cite{CheltsovPark} and \cite{Shramov}
that neither a $\mathbb{Q}$-factorial nodal
sextic double solid, nor a $\mathbb{Q}$-factorial nodal double quadric can be birational
to a standard conic bundle. Is it possible to apply the techniques used for the proof of Theorem~\ref{theorem:CPS} for nodal double covers of type~$V_{10}$?
\end{question}

\section{Rational quartic double solids}
\label{section:non-factorial}

In this section, we prove Theorem~\ref{theorem:non-factorial}.
Let $\tau\colon X\to\mathbb{P}^3$ be a double cover
branched over a nodal quartic surface $S$.
Suppose that $X$ is not $\mathbb{Q}$-factorial. We are going to
show that~$X$ is rational unless it is
described by Example~\ref{example:cubic-threefold}.

Let $f\colon{X}^\prime\to X$ be a $\mathbb{Q}$-factorialization of
the threefold $X$ (see \cite[Corollary~4.5]{Kawamata}), so that
the threefold $X^\prime$ has $\mathbb{Q}$-factorial terminal singularities.
Then
\begin{equation}
\label{equation:anticanclass}
-K_{{X}^\prime}\sim f^{*}\left(\tau^*\left(\mathcal{O}_{\mathbb{P}^3}(2)\right)\right).
\end{equation}
Moreover, since in a neighborhood of every node of $X$
the $\mathbb{Q}$-factorialization $f$ is either an isomorphism or a small resolution
of the node, we conclude that $X^\prime$ is also has at most nodes as singularities.
In particular, $X^\prime$ is Gorenstein.

Since $K_{{X}^\prime}$ is not nef, the cone
$\overline{\mathrm{NE}}({X}^\prime)$ has an extremal
ray that has negative intersection with $K_{{X}^\prime}$.
Let $\eta\colon{X}^\prime\to Y$ be a contraction of this extremal ray (see \cite[Corollary~2.9]{Shokurov2}).
Since $X$ is not $\mathbb{Q}$-factorial,
the rank of the Picard group of $X^\prime$ is at least $2$.
This means that the rank of the Picard group of $X^\prime$ is at least $1$,
so that $Y$ is not a point.

If $\eta$ is a conic bundle, then \eqref{equation:anticanclass} implies that
the pull-back of a plane in $\P^3$ via $\tau\circ f$ is a section of $\eta$,
so that $X$ is rational.
Similarly, if $\eta$ is a del Pezzo fibration, then
\eqref{equation:anticanclass} implies that the canonical class of its
general fiber is divisible by two in the Picard group,
so that the general fiber of $\eta$ is a quadric surface
by the adjunction formula, and $X$ is rational in this case as well.

Hence, to complete the proof, we may assume that $\eta$ is birational.
Note that $f$ cannot be a small contraction because $X^\prime$ is Gorenstein,
see~\cite[Theorem~6.2]{Mori}. If
$f$ contracts some surface $E$ to a curve, then a fiber of
$f$ over a general point of $E$ is a curve $L$ with~\mbox{$K_{X^\prime}\cdot L=-1$};
this is impossible by~\eqref{equation:anticanclass}. Thus
$f$ contracts some divisor $E$ to a point. It follows from~\cite[Theorem~5]{Cutkosky} that $f$ is a blow up of a smooth point in $Y$. Indeed, if $f$ is like in one of cases~(2), (3), or~(4) in the notation
of \cite[Theorem~5]{Cutkosky}, then it is easy to produce a curve $L$ contained in $E$ with odd
intersection $K_{X^\prime}\cdot L$; the latter is forbidden by~\eqref{equation:anticanclass}.
Hence $f$ is described by case~(1) of~\cite[Theorem~5]{Cutkosky}, which means that $P=f(E)$
is a smooth point on $Y$, and $f$ is the blow up of~$P$.
Let us denote this point by $P$.

The divisor $-K_{Y}$ is nef by \eqref{equation:anticanclass}. Moreover, we have
$$
\left(-K_{Y}\right)^3=\left(-K_{X^\prime}\right)^3+8=
\left(-K_{X}\right)^3+8=24,
$$
which implies that $-K_{Y}$ is big.
By \cite[Theorem~2.1]{Shokurov2}, the linear system $|-nK_Y|$ is base point free for some $n>0$,
and it gives a birational morphism $\phi\colon Y\to Z$ such that $Z$ is a Fano threefold with canonical Gorenstein singularities.
Moreover, \cite[Theorem~2.1]{Shokurov2} also implies that $-K_{Z}$ is divisible by $2$ in $\mathrm{Pic}(Z)$.
Since
$$
\left(-K_{Z}\right)^3=\left(-K_{Y}\right)^3=24,
$$
the threefold $Z$ must be isomorphic to a cubic threefold
in $\mathbb{P}^4$ (see, for example,~\mbox{\cite[Theorem~3.4]{Prokhorov}}).
In particular, if $Z$ is singular, then $X$ is rational.
Thus, to complete the proof, we may assume that $Z$ is smooth.
Then $\phi$ is an isomorphism, so we may assume that~\mbox{$Z=Y$}.
Therefore, there is a commutative diagram
$$
\xymatrix{
{X}^\prime
\ar@{->}[rr]^{\eta}\ar@{->}[d]_{f}&&Y\ar@{-->}[d]^{\gamma}\\
X\ar@{->}[rr]^{\tau}&&\mathbb{P}^{3}}
$$
where $\gamma$ is a linear projection from the point $P$.
Since $X$ is nodal, the cubic $Y$ contains exactly six lines that pass through $P$,
and $f$ is the contraction of their proper transforms.
This means that
$X$ is described by Example~\ref{example:cubic-threefold}.

\end{document}